     \theoremstyle{plain}
     \newtheorem{assumption}{Assumption}
\newtheorem {Proposition}{Proposition}[section]
\newtheorem {Lemma}[Proposition] {Lemma}
\newtheorem {Theorem}[Proposition]{Theorem}
\newtheorem {Corollary}[Proposition]{Corollary}
\newtheorem {Remark}[Proposition]{Remark}
\def\N{\mathbb{N}}
\def\R{\mathbb{R}}
\numberwithin{equation}{section}
\begin{document}

\title{Central Limit Theorems for General Transportation Costs}

%\thankstext{T1}{Footnote to the title with the ``thankstext'' command.}

\author{Eustasio del Barrio$^{(1)}$\footnote{Research partially supported by FEDER, Spanish Ministerio de Econom\'ia y Competitividad, grant MTM2017-86061-C2-1-P and Junta de Castilla y Le\'on, grants VA005P17 and VA002G18.}, Alberto Gonz\'alez-Sanz$^{(2)}$, and Jean-Michel Loubes $^{(3)}$\footnote{Research partially supported by the AI Interdisciplinary Institute ANITI, which is funded by the French “Investing for the Future – PIA3” program under the Grant agreement ANR-19-PI3A-0004.}\\  $\,$ \\ 
{ $^{(1)(2)}$IMUVA, Universidad de Valladolid, Spain} \\ 
$^{(2)(3)}$IMT, Universit\'e de Toulouse
France\\ $\,$ \\ 
$^{(1)}$tasio@eio.uva.es \quad $^{(2)}$alberto.gonzalez sanz@math.univ-toulouse.fr \\ $^{(3)}$loubes@math.univ-toulouse.fr}
\maketitle

\begin{abstract}
We consider the problem of optimal transportation with general  cost between a empirical 
measure and a general target probability on $\mathbb{R}^d$, with $d\geq 1$. We extend results in \cite{BaLo} and prove asymptotic stability of both optimal transport maps and potentials for a large class of costs in $\R^d$. We derive a central limit theorem (CLT) towards a Gaussian distribution for the empirical transportation cost under minimal assumptions, with a  new proof  based on the Efron-Stein inequality and on the sequential compactness   of the closed unit ball in $L_2(\mathbb{P})$ for the weak topology.
\end{abstract}

\noindent \textit{Keywords:} Optimal transport, optimal matching, CLT, Efron-Stein's inequality.
\section{Introduction}

In the last few years new techniques based on the optimal transportation problem have become popular to handle statistical and machine learning problems over the space of probability distributions. Dealing with distributions has shed light on the need for probabilistic tools that are well adapted to the intrinsic geometry of the data, and the theory of optimal transport provides a natural framework to tackle such issues.  In particular the transportation cost distance is a convenient metric in many problems encountered in data science and the range of application fields is huge, including for instance computational statistics, biology, image analysis, economy, finance or fairness in machine learning. We refer for instance to \cite{SCHIEBINGER2019928}, \cite{HAL}, \cite{MAL-073}, \cite{courty}, \cite{bachoc2017gaussian},  \cite{gordaliza2019obtaining}, \cite{black2020} and references therein. Understanding the approximations done when dealing with empirical distributions  and providing  better controls on the asymptotic distribution of optimal transport cost is of importance for further research on this subject.

In all this work, we will be concerned with probabilities on  the measurable space $\R^d$, endowed with the Borel $\sigma$-field, denoted as $\mathcal{P}(\R^d)$. In this setting, the optimal transport problem is formulated as follows. Let $P,Q$ be probability measures in $ \mathcal{P}(\R^d)$ and $c:\R^d \times \R^d\rightarrow \R$ be a function referred to as the  \emph{cost}. We say that a  measurable map  $T:\R^d \rightarrow \R^d$ is an \emph{optimal transport map} from $P$ to $Q$ if it is a minimizer in the problem
\begin{align}\label{Monge1}
\mathcal{T}_c(P,Q):=\inf_{T:\, T_{\#}P=Q}\int_{\mathbb{R}^d} c(\textbf{x},T(\textbf{x})) d P(\textbf{x}), 
\end{align}
where the notation $T_{\#}P$ represents the \emph{push-forward} measure, that is, the measure such that for each measurable set $A$ we have $T_{\#}P(A):=P(T^{-1}(A))$. \\
This previous formulation of the problem is known as the Monge formulation and is closely related to the following problem known as the Kantorovich optimal transportation problem. \\
A probability measure $\pi \in \mathcal{P}(\R^d \times \R^d)$ is said to be an \emph{optimal transport plan for the cost $c$} between $P$ and $Q$ if it is a minimizer in the problem
\begin{align}\label{kant}
\mathcal{T}_c(P,Q)=\inf_{\gamma \in \Pi(P,Q)}\int_{\R^d\times \R^d} c(\textbf{x},\textbf{y}) d \pi(\textbf{x}, \textbf{y}),
\end{align}
where $\Pi(P,Q)$ is the set of probability measures $\pi \in \mathcal{P}(\R^d \times \R^d)$ such that $\pi(A\times \R^d)=P(A)$ and $\pi(\R^d \times B)=Q(B)$ for all $A,B$ measurable sets. We have used the same notation for the minimum value in both \eqref{Monge1} and \eqref{kant}, and this and the existence of optimal transport maps indeed hold for rather general costs, as shown in \cite{GaMc}, including the \emph{potential} costs $c_p(\textbf{x},\textbf{y})=|\textbf{x}- \textbf{y}|^p$, $p\geq 1$. We write in this case $\mathcal{T}_p(P,Q)$ for the minimal value in \eqref{Monge1} or \eqref{kant} and $\mathcal{W}_p(P,Q):=(\mathcal{T}_p(P,Q))^{1/p}$. Note that  $\mathcal{W}_p(P,Q)$ is  a distance on the subset in $\mathcal{P}(\R^d)$ of distributions with finite moment of order $p$, denoted as $\mathcal{P}^p(\R^d)$, referred to as the  $p-$Wasserstein or Monge-Kantorovich distance. This distance is closely related to the weak topology of $\mathcal{P}(\R^d)$, in the sense that  $P_n \stackrel{w}{\longrightarrow} P$ and $\int | \textbf{x}|^p dP_n(\textbf{x})\longrightarrow \int | \textbf{x}|^p dP(\textbf{x})$
is equivalent to $\mathcal{W}_p(P_n,P)\longrightarrow 0$.

For applications in statistics or machine learning, objects of main interest are $\mathcal{T}_c(P_n,Q)$ or $\mathcal{T}_c(P_n,Q_m)$, where 
$P_n$ (resp. $Q_m$) denotes the empirical measure on a sample $X_1,\ldots,X_n$ of i.i.d. observations with law $P$ (resp. a sample $Y_1,\ldots,Y_m$ i.i.d. $Q$). Early work on this topic, starting with \cite{AjtaiKomlosTusnady} (see also \cite{Talagrand1992,Talagrand,TalagrandYukich} and the more recent \cite{FouGui}), focused on the case $P=Q$ and provided rates of decay of 
$\mathcal{T}_p(P_n,P)$ (in fact, $\mathcal{T}_p(P_n,P)\to 0$ a.s. if $P$ has a finite moment of order $p$), which turns out to depend on the dimension of the sample space. The problem becomes simpler when this dimension is one, since, in this case, there is a common representation using the quantile function for all convex costs. This was exploited in \cite{BaGiMa} and
\cite{dBGU05} for proving distributional limit theorems for $\mathcal{T}_p(P_n,P)$, $p=1,2$.
We refer to \cite{BaLo} for a more detailed account about the history of the problem. The problem has received a renewed interest in the last 
few years, both in the setup $P=Q$ (see \cite{AmbrosioStraTrevisan, Ledoux2019, Talagrand2018}) or for general $P$ and $Q$ (see  \cite{SoMu} and \cite{TaSoMu} for finitely and countably supported probabilities, \cite{BaLo} for the case $p=2$ and general  probabilities and dimension and
\cite{BaGoLo, BeFoKl} for dimension $d=1$ and general costs).

In this paper we provide central limit theorems for $\mathcal{T}_c(P_n,Q)$ or $\mathcal{T}_c(P_n,Q_m)$ for general cost functions and general dimension, under minimal moment and regularity assumptions on $P$ and $Q$. Our contribution covers the strictly convex costs in \cite{GaMc} for which existence of the optimal transport is guaranteed. Strict convexity of the cost appears to be a minimal requirement 
for a general central limit theorem with a Gaussian limiting distribution. In fact, for the non strictly convex cost $p=1$ and in a univariate setup, \cite{BaGiMa} shows that $\{\sqrt{n}\mathcal{T}_{1}(P_n,P)\}_{n \in \N}$ converges to a non-Gaussian distribution under some regularity assumptions. Our moment assumptions improve upon those in \cite{BaLo}. The main result there is that, under mild regularity assumptions on $P$ and $Q$ (these are assumed to be absolutely continuous probabilities on $\mathbb{R}^d$ with convex supports)
\begin{equation}\label{quadratic1}
\sqrt{n}\left(\mathcal{T}_2(P_n,Q)-E \mathcal{T}_2(P_n,Q)\right)\stackrel{w}{\longrightarrow} N(0, \sigma^2_2(P,Q))
\end{equation}
provided that $P$ and $Q$ have finite moments of order $4+\delta$ for some $\delta>0$. We could take $Q=\delta_0$ (Dirac's measure on $0$) and see that in that case $\mathcal{T}_2(P_n,Q)=\frac 1 n \sum_{i=1}^n |X_i|^2$, and therefore, that a finite moment of order 4 is necessary (and sufficient in this case) for a CLT. One may wonder if \eqref{quadratic1} still holds under the weaker assumption of finite fourth moments. In fact, in dimension $d=1$, \cite{BaGoLo} proves that for $p>1$
\begin{equation}\label{generalpd1}
\sqrt{n}\left(\mathcal{T}_p(P_n,Q)- E\mathcal{T}_p(P_n,Q)\right)\stackrel{w}{\longrightarrow} N(0, \sigma^2_p(P,Q)), 
\end{equation}
assuming only finite moments of order $2p$ and continuity of the quantile function (an equivalent formulation of this last assumption is that the support of $Q$ is an interval, see Proposition A.7 in \cite{BobkovLedoux}). 

Similar results, but with quite a few more requirements on the regularity of the probabilities, are proved in \cite{BeFoKl}. 
The key to prove \eqref{quadratic1} and \eqref{generalpd1} (the same approach has been used in \cite{MeNi} to study the asymptotic behaviour of  entropically regularized Wasserstein distances) is a linearization technique based on the Efron-Stein inequality for variances coupled with stability results for optimal transportation \textit{potentials}.
For continuous costs the Kantorovich problem \eqref{kant} admits an equivalent dual form, namely,
\begin{align}\label{dual}
\mathcal{T}_c(P,Q)=\sup_{(f,g)\in \Phi_c(P,Q)}\int f(\textbf{x}) dP(\textbf{x})+\int g(\textbf{y}) dQ(\textbf{y}),
\end{align}
where $\Phi_c(P,Q)=\{ (f,g)\in L_1(P)\times L_1(Q): \ f(\textbf{x})+g(\textbf{x})\leq c(\textbf{x},\textbf{y}) \}$. It is said that  $\psi\in L_1(P)$ is an {optimal transport potential from $P$ to $Q$ for the cost $c$} if there exists $\varphi\in L_1(Q)$ such that the pair $(\psi, \varphi)$ solves \eqref{dual}.

The present contribution generalizes the sharp results in \cite{BaGoLo} to multivariate probabilities and to much more general costs than $c_p$. Also, our results cover those in \cite{BaLo}, improving them in the sense that here we do not require $P$ and $Q$ to have a convex support, but only a connected support with a negligible boundary. Furthermore, to avoid the technical need for stronger-than-necessary moment assumptions, in this work we describe a completely new tool to prove a central limit theorem for transportation costs. This approach can be summarized as follows. We try to show that the empirical transportation cost can be approximated by a linear term. The linearization error, say $R_n$ (see \eqref{eq_def_R} for details) has a variance that can be bounded using the Efron-Stein inequality. The upper bound is the expected value of a random variable, say $U_n$, which converges to 0 a.s. (this convergence follows from the stability results for optimal transport potentials), but one cannot conclude from this that $E(U_n)\to 0$ without further conditions (as in \cite{BaLo}, for instance). Yet, one can show that the sequence $EU_n$ is bounded and then Banach-Alaoglu theorem yields weak convergence in $L_2(\mathbb{P})$ of $U_n$ along subsequences. By taking Ces\`aro means we can go from weak to strong convergence and, with some additional work, to conclude that $\sqrt{n}(R_n-ER_n)\to 0$ in probability, which immediately yields a CLT. All the details are provided in Section \ref{sec:CTL} and in the proofs in the Appendix.

A second, relevant contribution in this paper are new results on the convergence and, in some sense, the uniqueness of both optimal transport potentials and maps between $P_n$ and $Q_n$ when these sequences converge weakly to some probabilities $P$ and $Q$. There is a large amount of literature working on these topics. Convergence of optimal maps is a topic of general interest, beyond our application to CLTs and results on this issue have a long history, tracing back at least to \cite{Cuestaetal97}. To our knowledge, interest on the convergence of potentials is more recent
and requires some additional guarantee on the uniqueness of the potentials.
Seminal  results on it can be found in Theorem 2.8 in \cite{BaLo} for the quadratic cost. Corollary 5.23 in \cite{Vi}  deals with this problem for general costs but one of both probabilities is supposed to be fixed. Some results are provided in Theorem 1.52. in \cite{San} when the involved probabilities are compactly supported.\\
\indent Then problem of uniqueness of optimal transport potentials is linked to the smoothness of the probabilities and also to the topology of their supports. For a probability $Q$ is the smallest closed set $R_Q$ such $Q(R_Q)=1$. Yet, we will use the notation 
\begin{align}\label{supp}
\text{supp}(Q):=\text{int}\left(R_{Q}\right)
\end{align}
for the interior of $R_Q$. Moreover, we say that a probability $Q$ has \textit{negligible boundary} if $\ell_d(R_{Q}\setminus \text{supp}(Q))=0$,
where $\ell_d$ denotes Lebesgue measure on $\R^d$. A probability with a convex support has a negligible boundary, but the condition is far from necessary. When the cost is of the form $c(\mathbf{x},\mathbf{y})=h(\mathbf{x}-\mathbf{y})$ with $h$ satisfying some regularity assumptions (see (A1)-(A3) and the related discussion in Section 2) and $Q$ has a density with respect to Lebesgue measure (in the sequel, when $Q\ll \ell_d$) and a connected support with negligible boundary then we prove (Corollary \ref{cor:uniqueness}) that optimal transport potentials are unique up to an additive constant (it is easy to see that this fails if $Q$ has a disconnected support). 
From this uniquenes we move on to give general stability results for optimal trasnport potentials under  only the following assumption
\begin{assumption}\label{assumptio:conv_pot}
$Q\in \mathcal{P}(\R^d)$ is such that $Q\ll \ell_d$ and has connected support with negligible boundary; $Q_n, P_n,P\in \mathcal{P}(\R^d)$ are such that $P_n\stackrel{w}\rightarrow P$, $Q_n\stackrel{w}\rightarrow Q$,
$$\mathcal{T}_c(P_n,Q_n)<\infty \ \ \text{and} \ \ \mathcal{T}_c(Q,P)<\infty ,$$ 
for a cost $c(\mathbf{x},\mathbf{y})=h(\mathbf{x}-\mathbf{y})$ with $h$ differentiable and satisfying (A1)-(A3).
\end{assumption}
If $\psi_n$ (resp. $\psi$) are the $c$-optimal transport potentials from $Q_n$ to $P_n$ (resp. from $Q$ to $P$), then we prove in Theorem~\ref{Theo:main2}  that
\begin{enumerate}
\item[\textit{(a)}] There exist constants $a_n\in \R$ such that $\tilde{\psi_n}:=\psi_n-a_n \rightarrow \psi$ in the sense of uniform convergence on the compacts sets.
\item[\textit{(b)}] 
For each compact $K\subset \text{supp}(Q)\cap \text{dom}(\nabla \psi)$
\begin{align*}
\sup_{\mathbf{x}\in K}\sup_{\mathbf{y}_n\in \partial^c \psi_n(\mathbf{x})}|\mathbf{y}_n-\nabla^c \psi(\mathbf{x}) |\longrightarrow 0,
\end{align*}
\end{enumerate}
where $\text{dom}(\nabla \psi)$ denotes the set of points where $\psi$ is differentiable.

The second main contribution of this paper is to provide a general result on the 
nature of the fluctuation of the empirical transportation cost around its expected value. More precisely, we consider the asymptotic behaviour of  $\{\sqrt{n}(\mathcal{T}_{c}(P_n,Q)-E\mathcal{T}_{c}(P_n,Q)) \}_{n \in \N}$. Our main result (Theorem~\ref{Theorem_vaiance_bound_general_sin_delta}), establishes the convergence 
\begin{align*}
\sqrt{n}\left(\mathcal{T}_c(P_n,Q)-E \mathcal{T}_c(P_n,Q)\right)\stackrel{w}{\longrightarrow} N(0, \sigma^2_c(P,Q)), 
\end{align*}
 with 
\begin{align*}
\sigma^2_c(P,Q):=\int{\varphi(\mathbf{x})^2}dP(\mathbf{x})-\left( \int{\varphi(\mathbf{x})}dP(\mathbf{x})\right)^2,
\end{align*}
where $\varphi$ is an optimal transport potential for the cost $c$, from $P$ to $Q$. This CLT holds assuming only that $c(\mathbf{x},\mathbf{y})=h(\mathbf{x}-\mathbf{y})$ with $h$ differentiable and satisfying (A1)-(A3) and $P, Q\in \mathcal{P}(\R^d)$ satisfying 
\begin{assumption}\label{assumption_dist}
$P\ll \ell_d$ and $Q\ll \ell_d$  have connected supports with negligible boundary; moreover 
\begin{align*}
\int h(2\mathbf{x})^2dP(\mathbf{x})<\infty,\quad \int h(-2\mathbf{y})^2dQ(\mathbf{y})<\infty,
\end{align*}
and
\begin{align*}
\begin{split}
&\inf_{q_1,q_2\in[1,\infty]:\ \frac{1}{q_1}+\frac{1}{q_2}=1}E| X_1-X'_1|^{2q_1}E\left(\int_{\R^d} |\nabla h(X_1-\textbf{y})|^{2q_2} d Q(\textbf{y})\right) <\infty,
\end{split}
\end{align*}
where $X_1$ has law $P$ and $X_1'$ is an independent copy of $X_1$.
\end{assumption}
We note that $ \sigma^2_c(P,Q)$ is well defined, in the sense that it does not depend on the chosen potential, which is proved to be unique up to an additive constant in Corollary~\ref{cor:uniqueness}.\\
The linearization technique that we use yields CLTs for the transportation cost under minimal assumptions. We discuss this with detail in the case of potential costs in Section 4. As a minor prize to pay, the approach does not yield moment convergence. We show in Theorem \ref{Theorem_vaiance_bound_general} that moment conevergence holds under some additional moment assumptions. Finally, we derive a CLT for the empirical transportation cost in a two-sample setup and a further CLT for the empirical $p$-Wasserstein distance. \vskip .1in
We end this Introduction with some details about our setup and notation. We assume all the involved random variables (we use this term for both $\R$ and $\R^d$-valued random elements) to be defined on a probability space $(\Omega, \mathcal{A},\mathbb{P})$. We write $L_2(\mathbb{P})$ for the Hilbert space of square integrable random variables on the former space. $\to_w$ denotes weak convergence of probability measures, while we write $\rightharpoonup$ for weak convergence (in the usual sense in Functional Analysis) in the space $L_2(\mathbb{P})$. At some points we write $A\subset \subset B$ to mean that there is some compact set, $K$, such that $A \subset K \subset B$.

\section{Preliminary results on optimal transport maps and potentials}\label{sec:c-con}
This section presents some results related to optimal transport potentials and maps for general costs. The main reference on the topic is \cite{GaMc}. {We give two main results, which are necessary tools for the study of stability in section 3: we prove uniqueness, up to an additive constant, of the optimal transport potential (Corollary~\ref{cor:uniqueness}) and a weak continuity result for a version of the optimal transport maps Lemma \ref{lemma:cont_c}.} 

We consider the optimal transport problem formulated in its dual form \eqref{dual}. Convexity plays a key role in the optimal transportation problem with quadratic cost. This idea can be adapted to general costs through the notion of $c$-concavity. Recall that  $f:\R^d\rightarrow \R\cup \{-\infty\}$ is said to be \textit{$c$-concave} 
if there exist a set  $\mathcal{T}\subset \R^d\times \R$ such that 
\color{black}
\begin{align}\label{eq:c_concave}
f(\mathbf{x})=\inf_{(\mathbf{y},\mathbf{t})\in \mathcal{T}}\{ c(\mathbf{x},\mathbf{y})-\mathbf{t} \}.
\end{align}
For a function $f:\R^d\rightarrow \R\cup \{-\infty\}$ the \emph{$c$-conjugate of $f$} (see \cite{GaMc}) is defined as
\begin{align}\label{eq:c_conj}
f^c(\mathbf{y})=\inf_{\mathbf{x}\in {\mathbb{R}^d}}\{ c(\mathbf{x},\mathbf{y})-f(\mathbf{x})\} \ \ \text{for all  }\mathbf{y}\in {\mathbb{R}^d}.
\end{align}
$c$-conjugation can be seen as a generalization of the Legendre's transform in convex analysis, see \cite{Rockafellar}.
Obviously, $f^c$ is $c$-concave and it is easy to check that its own $c$-conjugate, $f^{cc}$, satisfies $f^{cc}\geq f$, with equality if $f$ is $c$-concave. This means that we can restrict the collection of pairs $(f,g)$ in \eqref{dual} to pairs $(f, f^c)$, with $f$ $c$-concave, without changing the optimal value.

For a $c-$concave function $f:\mathbb{R}^d\rightarrow \R\cup \{-\infty\}$
the \textit{$c$-superdifferential} of $f$, $\partial^c f$, is the set of pairs $(\mathbf{x},\mathbf{y})\in \mathbb{R}^d\times \mathbb{R}^d$ such that 
$$f(\mathbf{z})\leq f(\mathbf{x})+[c(\mathbf{z},\mathbf{y})-c(\mathbf{x},\mathbf{y})]\quad \mbox{ for all } \mathbf{z}\in \mathbb{R}^d$$ 
(see, e.g., Definition 1.1 in \cite{GaMc}).
We write $\partial^c f(\mathbf{x})$ for the set of $\mathbf{y}$ such that $(\mathbf{x},\mathbf{y})\in\partial^c f$ and, more generally,
$\partial^c f(U)=\cup_{\mathbf{x}\in U}\partial^c f(\mathbf{x})$ for $U\subset \mathbb{R}^d$. Under mild assumptions (implied by (A1)-(A3) below; see Propositions C.3 and C.4 in \cite{GaMc}) $\partial^c f(\mathbf{x})$ is nonempty if $f$ is finite in a neighborhood of $\mathbf{x}$. When $\partial^c f(\mathbf{x})$ is a singleton we denote this point as $\nabla^c f(\mathbf{x})$. It is easy to see, for a $c$-concave function $f$, that
$f(\mathbf{x})+f^c(\mathbf{y})\leq c(\mathbf{x},\mathbf{y})$, with equality if and only if $\mathbf{y}\in \partial^c f(\mathbf{x})$. As a consequence of these key observations, $\pi\in\Pi(P,Q)$ is an optimal transport plan (a minimizer in \eqref{kant}) and the $c$-concave function $f$ is an optimal transport potential ($(f,f^c)$ is a maximizer in \eqref{dual}) if and only if $\pi$ is concentrated on the set $\partial^c f$.
This yields a characterization of optimal transport plans, {provided a maximizer in \eqref{dual} exists}. In that case we can get an equivalent
description of optimal transport plans in terms of cyclical monotonocity (see \cite{SmKn,Ruschendorf2}). 
A set $\Gamma\subset \mathbb{R}^d\times \mathbb{R}^d$ is said to be \textit{$c$-cyclically monotone} if for all $n\in \N$ and $\{(\mathbf{x}_k,\mathbf{y}_k)\}_{k=1}^{n} \subset \Gamma $ 
\begin{align}
	\sum_{k=1}^n c(\mathbf{x}_k,\mathbf{y}_k)\leq \sum_{k=1}^n c(\mathbf{x}_{\sigma(k)},\mathbf{y}_k),
\end{align}
for every permutation $\sigma$ in $\{1, \dots, n\}$. Optimal transport plans are supported in $c$-cyclically monotone sets (see Theorem \ref{Theo:GaMc} below). In the convex case (which corresponds to the quadratic cost $c_2(\mathbf{x},\mathbf{y})=|\mathbf{x}-\mathbf{y}|^2$) cyclically monotone sets are those contained in the subdifferential of a convex function and the subdifferential of a convex function is maximal cyclically monotone 
(this is known as Rockafellar's Theorem, see for instance,  \cite{Rockafellar2}). For general costs a similar result holds. We quote it for convenience in the next Lemma. A proof can be found in \cite{Ruschendorf} (Lemma 2.1). Note that Lemma~\ref{lem:Ruschendorf} is weaker than Rockafellar's Theorem for convex functions, since it does not claim that the set $\partial^c f$ is maximal.

\begin{Lemma}\label{lem:Ruschendorf}
If $c\geq 0$ is a continuous cost then a set $\Gamma\subset \mathbb{R}^d\times \mathbb{R}^d$ is $c$-cyclically monotone if and only if there exists a $c$-concave function $f$ such that $\Gamma \subset \partial^c f$.
\end{Lemma}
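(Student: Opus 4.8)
The plan is to prove the two implications separately; the reverse one is a one-line computation, while the forward one is the classical Rockafellar--R\"uschendorf construction adapted to a general cost.

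\emph{From $\partial^c f$ to $c$-cyclical monotonicity.} Suppose $\Gamma\subset\partial^cf$ for a $c$-concave $f$; we may assume $\Gamma\neq\emptyset$, so $f\not\equiv-\infty$. Given $\{(\mathbf{x}_k,\mathbf{y}_k)\}_{k=1}^n\subset\Gamma$ and a permutation $\sigma$, the defining inequality of $\partial^cf$ evaluated at $\mathbf{z}=\mathbf{x}_{\sigma(k)}$ gives $f(\mathbf{x}_{\sigma(k)})-f(\mathbf{x}_k)\le c(\mathbf{x}_{\sigma(k)},\mathbf{y}_k)-c(\mathbf{x}_k,\mathbf{y}_k)$ for each $k$ (all the $f(\mathbf{x}_k)$ are finite, since $f(\mathbf{x}_k)=-\infty$ would force $f\equiv-\infty$). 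Summing over $k$ and cancelling $\sum_k f(\mathbf{x}_{\sigma(k)})=\sum_k f(\mathbf{x}_k)$ yields $\sum_k c(\mathbf{x}_k,\mathbf{y}_k)\le\sum_k c(\mathbf{x}_{\sigma(k)},\mathbf{y}_k)$, which is exactly the $c$-cyclical monotonicity of $\Gamma$.

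\emph{From $c$-cyclical monotonicity to $\partial^c f$.} Assume $\Gamma$ is $c$-cyclically monotone and nonempty (the empty case being trivial). Fix $(\mathbf{x}_0,\mathbf{y}_0)\in\Gamma$ and, with the convention $\mathbf{x}_{m+1}:=\mathbf{x}$, set
\begin{equation*}
f(\mathbf{x}):=\inf\Big\{\,\sum_{i=0}^{m}\big[c(\mathbf{x}_{i+1},\mathbf{y}_i)-c(\mathbf{x}_i,\mathbf{y}_i)\big]\ :\ m\ge 0,\ (\mathbf{x}_1,\mathbf{y}_1),\dots,(\mathbf{x}_m,\mathbf{y}_m)\in\Gamma\,\Big\},
\end{equation*}
the infimum running over all finite chains in $\Gamma$ emanating from $(\mathbf{x}_0,\mathbf{y}_0)$ (for $m=0$ the chain is empty and the value is $c(\mathbf{x},\mathbf{y}_0)-c(\mathbf{x}_0,\mathbf{y}_0)$). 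Each value in the infimum is, as a function of $\mathbf{x}$, of the form $c(\mathbf{x},\mathbf{y}_m)-\mathbf{t}$ with $\mathbf{t}$ a constant depending on the chain, so $f$ is $c$-concave in the sense of \eqref{eq:c_concave}. The $m=0$ chain gives $f(\mathbf{x}_0)\le 0$, while invoking $c$-cyclical monotonicity for the cyclic shift of any chain closed up at $\mathbf{x}_0$ gives $f(\mathbf{x}_0)\ge 0$; hence $f(\mathbf{x}_0)=0$ and $f\not\equiv-\infty$. Likewise, for any $(\bar{\mathbf{x}},\bar{\mathbf{y}})\in\Gamma$, appending $(\bar{\mathbf{x}},\bar{\mathbf{y}})$ to an arbitrary chain reaching $\bar{\mathbf{x}}$ and closing the cycle at $\mathbf{x}_0$ yields, via $c$-cyclical monotonicity and finiteness of $c$, the lower bound $f(\bar{\mathbf{x}})\ge c(\bar{\mathbf{x}},\bar{\mathbf{y}})-c(\mathbf{x}_0,\bar{\mathbf{y}})>-\infty$.

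It then remains to verify $\Gamma\subset\partial^cf$. Fix $(\bar{\mathbf{x}},\bar{\mathbf{y}})\in\Gamma$, $\mathbf{z}\in\R^d$ and $\varepsilon>0$, and pick a chain $(\mathbf{x}_1,\mathbf{y}_1),\dots,(\mathbf{x}_m,\mathbf{y}_m)\in\Gamma$ with $\mathbf{x}_{m+1}=\bar{\mathbf{x}}$ whose associated sum is at most $f(\bar{\mathbf{x}})+\varepsilon$. Appending the element $(\bar{\mathbf{x}},\bar{\mathbf{y}})$ and then the terminal point $\mathbf{z}$ produces an admissible chain for $f(\mathbf{z})$ whose sum exceeds the previous one by exactly $c(\mathbf{z},\bar{\mathbf{y}})-c(\bar{\mathbf{x}},\bar{\mathbf{y}})$, so
\begin{equation*}
f(\mathbf{z})\ \le\ f(\bar{\mathbf{x}})+\varepsilon+c(\mathbf{z},\bar{\mathbf{y}})-c(\bar{\mathbf{x}},\bar{\mathbf{y}}).
\end{equation*}
Letting $\varepsilon\downarrow 0$ gives $f(\mathbf{z})\le f(\bar{\mathbf{x}})+c(\mathbf{z},\bar{\mathbf{y}})-c(\bar{\mathbf{x}},\bar{\mathbf{y}})$ for every $\mathbf{z}$, i.e.\ $(\bar{\mathbf{x}},\bar{\mathbf{y}})\in\partial^cf$, and thus $\Gamma\subset\partial^cf$. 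The one genuinely delicate point is the use of $c$-cyclical monotonicity to guarantee that $f(\mathbf{x}_0)>-\infty$ and $f(\bar{\mathbf{x}})>-\infty$ on $\Gamma$, which is what makes $f$ a bona fide $c$-concave function rather than identically $-\infty$; the standing hypotheses that $c$ is continuous and nonnegative enter only to keep all cost differences finite, so that the chain sums are well defined. Everything else is telescoping bookkeeping of the chains.
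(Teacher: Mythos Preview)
Your proof is correct and is precisely the classical Rockafellar--R\"uschendorf construction. The paper does not supply its own proof of this lemma; it simply cites R\"uschendorf (1996), whose argument is essentially the one you have written out, so there is no meaningful difference in approach to report.
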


Existence of maximizing pairs in \eqref{dual} (which, as noted above, would yield a characterization of optimal transport plans) is not guaranteed
without some assumptions on the cost. Hence, we restrict our study to regular costs in the sense of~\cite{GaMc}, as follows: we will assume $c(\mathbf{x},\mathbf{y})=h(\mathbf{x}-\mathbf{y})$, where $h:\R^d\rightarrow [0, \infty)$ is a non negative function satisfying
\begin{enumerate}
	\item[(A1)] $h$ is strictly convex on $\R^d$,
	\item[(A2)] given a height $r\in \R^+$ and an angle $\theta \in (0,\pi) $, there exists some $M:=M(r, \theta)>0$ such that for all $|\mathbf{p} |>M$, one can find a cone 
	\begin{align*}
		K(r, \theta, \mathbf{z},\mathbf{p}):=\left\lbrace \mathbf{x}\in \R^d : | \mathbf{x}-\mathbf{p}|| \mathbf{z}|\cos(\theta/2)\leq \left< \mathbf{z},\mathbf{x}-\mathbf{p} \right>\leq  r| \mathbf{z}| \right\rbrace,
	\end{align*}
	with vertex at $\mathbf{p}$ on which $h$ attains its maximum at $\mathbf{p}$,
	\item[(A3)] $\lim_{|\mathbf{x} | \rightarrow \infty}\frac{h(\mathbf{x})}{|\mathbf{x} | }= \infty $.
	%\item $h$ is differentiable and its gradient is locally Lipschitz $h\in \mathcal C^{1,1}_{loc}(\R^d)$.
\end{enumerate}

\begin{Remark}\label{Remark:p_satisfies}
The potential cost $c_p(\mathbf{x},\mathbf{y}):=| \mathbf{x}-\mathbf{y} |^p$ satisfies conditions {\em (A1)-(A3)} for $p> 1$, see {\em \cite{GaMc}}.
\end{Remark}

{
In the case of a quadratic cost the crucial step to turn the characterization of optimal transport plans into a characterization of 
optimal transport maps relies
on the fact that convex functions are locally Lipschitz, hence, by Rademacher's Theorem (see, e.g., Theorem 9.60 in \cite{RoWe}), they are differentiable at almost every point in the interior of their domain. For general costs convexity does not hold, but the Lipschitz property remains with great generality. In fact, if $g$ is a $c$-concave function then for every  $(\mathbf{a},\mathbf{b}),(\mathbf{x},\mathbf{y})\in \partial^c g$
we have
\begin{eqnarray}\label{eq:cota}
|g(\mathbf{x})- g(\mathbf{a})|&\leq &|c(\mathbf{x},\mathbf{y})-c(\mathbf{a},\mathbf{y})|+|c(\mathbf{x},\mathbf{b})-c(\mathbf{a},\mathbf{b})|.
\end{eqnarray}
When $c(\mathbf{x},\mathbf{y})=h(\mathbf{x}-\mathbf{y})$ with $h$ convex and differentiable, \eqref{eq:cota} implies that
\begin{eqnarray}\label{eq:cota2}
|c(\mathbf{x},\mathbf{y})-c(\mathbf{a},\mathbf{y})| &\leq&  |\mathbf{x}-\mathbf{a}|\left( |\nabla h (\mathbf{x}-\mathbf{y})|+  |\nabla h (\mathbf{a}-\mathbf{y})|\right).
\end{eqnarray}
As a consequence we obtain that
\begin{align*}
|g(\mathbf{x})- g(\mathbf{a})|\leq |\mathbf{x}-\mathbf{a}||\zeta(\mathbf{a},\mathbf{b},\mathbf{x},\mathbf{y})|, \ \ \text{for all} \ (\mathbf{a},\mathbf{b}),(\mathbf{x},\mathbf{y})\in \partial^c g,
\end{align*}
where $\zeta(\mathbf{a},\mathbf{b},\mathbf{x},\mathbf{y})$ is a continuous function (we recall that a differentiable convex function is, in fact, continuouly differentiable, see Corollary 25.5.1 in \cite{Rockafellar}). Ellaborating on these bounds it can be proved that
under (A1)-(A3) $c$-concave functions are locally Lipschitz, hence, differentiable at almost every point. For convenience we quote here a precise result (see Theorem 3.3 in \cite{GaMc}).

\begin{Lemma}\label{lem:Mccan}
Let $c(\mathbf{x},\mathbf{y})=h(\mathbf{x}-\mathbf{y})$ be a cost satisfying {\em (A1)-(A3)} and let $f$ be a $c$-concave function, then there exists a convex set $K\subset \R^d$ with interior $\Omega$ such that 
\begin{enumerate}
\item[(i)] $\Omega\subset \text{\em dom}(f)=\{\mathbf{x}: f(\mathbf{x})\in \R \}\subset K$,
\item[(ii)] $f$ is locally Lipschitz in $\Omega$.  
\end{enumerate}
\end{Lemma}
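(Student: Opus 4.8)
The statement has two ingredients, a structural one about $\mathrm{dom}(f)$ (part (i)) and a regularity one (part (ii)), and my plan is to reduce (ii) to (i) together with a bound on the $c$-superdifferential over compacts, using only the elementary estimates \eqref{eq:cota}--\eqref{eq:cota2}. For part (i) the natural choice is $K:=\overline{\mathrm{conv}(\mathrm{dom}(f))}$ and $\Omega:=\mathrm{int}(K)$, so that $\mathrm{dom}(f)\subset K$ is automatic and the real work is the inclusion $\Omega\subset\mathrm{dom}(f)$, i.e.\ that $f>-\infty$ at every interior point of the convex hull of its domain. Using $f=f^{cc}$, for $\mathbf{x}$ a strict convex combination $\sum_i\lambda_i\mathbf{x}_i$ of points $\mathbf{x}_i\in\mathrm{dom}(f)$ I would bound $h(\mathbf{x}-\mathbf{y})-f^c(\mathbf{y})$ from below uniformly in $\mathbf{y}$: one has $f^c(\mathbf{y})\le h(\mathbf{x}_i-\mathbf{y})-f(\mathbf{x}_i)$ for each $i$ and $h(\mathbf{x}-\mathbf{y})\le\sum_i\lambda_i h(\mathbf{x}_i-\mathbf{y})$ by convexity (A1), and one treats bounded $\mathbf{y}$ by continuity of $h$ and upper semicontinuity of $f^c$, and $\mathbf{y}\to\infty$ via the cone condition (A2) and superlinearity (A3), which together ensure that for every escape direction some $\mathbf{x}_i$ lies on the ``near side'' so that $h(\mathbf{x}_i-\mathbf{y})\le h(\mathbf{x}-\mathbf{y})+O(1)$. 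Taking the infimum over $\mathbf{y}$ then gives $f(\mathbf{x})\in\R$. This is, essentially, the domain analysis behind Theorem~3.3 of \cite{GaMc}, which I would simply cite here.

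For part (ii), fix $\mathbf{x}_*\in\Omega$ and a closed ball $B'=\bar B(\mathbf{x}_*,\rho)\subset\Omega$. By part (i), $f$ is finite in a neighbourhood of every point of $\Omega$, hence $\partial^c f(\mathbf{x})\neq\emptyset$ for $\mathbf{x}\in\Omega$; then for $\mathbf{a},\mathbf{x}\in B'$ with $\mathbf{b}\in\partial^c f(\mathbf{a})$ and $\mathbf{y}\in\partial^c f(\mathbf{x})$ the chain \eqref{eq:cota}--\eqref{eq:cota2} gives $|f(\mathbf{x})-f(\mathbf{a})|\le|\mathbf{x}-\mathbf{a}|\,|\zeta(\mathbf{a},\mathbf{b},\mathbf{x},\mathbf{y})|$ with $\zeta$ continuous. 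Thus $f$ is Lipschitz on $B'$ with constant the supremum of $|\zeta|$ over $B'\times\partial^c f(B')\times B'\times\partial^c f(B')$, and everything reduces to showing that $\partial^c f(B')$ is a bounded set.

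To prove that $\partial^c f(B')$ is bounded I would argue by contradiction through a ``two leaves'' argument that avoids any a priori lower bound on $f$. Suppose $\mathbf{x}_n\in B'$ and $\mathbf{y}_n\in\partial^c f(\mathbf{x}_n)$ with $|\mathbf{y}_n|\to\infty$; fix a simplex with vertices $\mathbf{v}^{(0)},\dots,\mathbf{v}^{(d)}\in\Omega$ strictly containing $B'$ and choose $\mathbf{c}^{(j)}\in\partial^c f(\mathbf{v}^{(j)})$. Testing the $c$-superdifferential inequality at $(\mathbf{x}_n,\mathbf{y}_n)$ against $\mathbf{v}^{(j)}$ and at $(\mathbf{v}^{(j)},\mathbf{c}^{(j)})$ against $\mathbf{x}_n$, and adding, yields
\[
h(\mathbf{x}_n-\mathbf{y}_n)-h(\mathbf{v}^{(j)}-\mathbf{y}_n)\le h(\mathbf{x}_n-\mathbf{c}^{(j)})-h(\mathbf{v}^{(j)}-\mathbf{c}^{(j)}),
\]
whose right-hand side stays bounded (the $\mathbf{x}_n$ lie in the compact $B'$ and the $\mathbf{v}^{(j)},\mathbf{c}^{(j)}$ are fixed). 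On the other hand, because $B'$ sits strictly inside the simplex, for each $n$ one may choose $j=j(n)$ with $\mathbf{v}^{(j)}$ on the $\mathbf{y}_n$-side of $\mathbf{x}_n$ and the corresponding angle bounded away from $\pi/2$; then $\mathbf{v}^{(j)}-\mathbf{y}_n$ is obtained from $\mathbf{x}_n-\mathbf{y}_n$ by a step of bounded length with a component of size bounded below pointing, from $\mathbf{x}_n-\mathbf{y}_n$, toward the (unique, by (A1)) minimiser $\mathbf{m}$ of $h$, and convexity together with superlinearity (A3) forces $h(\mathbf{x}_n-\mathbf{y}_n)-h(\mathbf{v}^{(j)}-\mathbf{y}_n)\to\infty$, contradicting the displayed bound. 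Hence $\partial^c f(B')$ is bounded, $f$ is locally Lipschitz on $\Omega$ (hence locally bounded there), and Rademacher's theorem then gives differentiability $\ell_d$-a.e.\ on $\Omega$, which is the form used later in the paper.

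The main obstacle is part (i): controlling where $f=-\infty$, i.e.\ excluding it at interior points of $\mathrm{conv}(\mathrm{dom}(f))$ while tolerating it on the boundary. This is precisely where the cone condition (A2) is genuinely needed, namely to pin down the directions along which $h$ can fail to be coercive relative to its growth at infinity — both in the lower bound for $h(\mathbf{x}-\mathbf{y})-f^c(\mathbf{y})$ above and, implicitly, in the quantitative ``step toward $\mathbf{m}$'' estimate. Part (ii), once (i) is granted, uses only convexity, superlinearity, nonemptiness of $\partial^c f$ on $\Omega$, and \eqref{eq:cota}--\eqref{eq:cota2}. Accordingly, in the write-up I would invoke Theorem~3.3 of \cite{GaMc} for the structure of $\mathrm{dom}(f)$ and carry out the Lipschitz estimate along the lines above.
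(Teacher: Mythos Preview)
The paper does not prove this lemma at all: it is quoted verbatim as Theorem~3.3 of \cite{GaMc} and used as a black box. Your proposal is therefore consistent with the paper's treatment (you also end up citing \cite{GaMc}), and in fact goes well beyond it by sketching the underlying argument; the two-leaves bound you derive for $\partial^c f(B')$ is essentially the content of Proposition~C.4 in \cite{GaMc}, which the paper also invokes elsewhere without proof.

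One small caveat on your sketch: in the contradiction step for boundedness of $\partial^c f(B')$, the claim that $h(\mathbf{x}_n-\mathbf{y}_n)-h(\mathbf{v}^{(j)}-\mathbf{y}_n)\to\infty$ does not follow from (A1) and (A3) alone for general (non-radial) $h$; this is exactly where (A2) enters in \cite{GaMc}, to guarantee that for $|\mathbf{x}_n-\mathbf{y}_n|$ large there is a cone of directions along which $h$ decreases from $\mathbf{x}_n-\mathbf{y}_n$, and that some vertex $\mathbf{v}^{(j)}$ can be chosen so that $\mathbf{v}^{(j)}-\mathbf{y}_n$ lies in that cone. Your ``step toward $\mathbf{m}$'' heuristic is the right intuition but would need (A2) to be made rigorous. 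Since you already flag (A2) as the crux and propose to cite \cite{GaMc} anyway, this is not a gap in the final write-up.
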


Now we can relate the shape of the gradient of a $c$-concave function to the shape of the $c$-superdifferential.
We write $h^*$ for the convex conjugate of $h$, namely, $h^*(\mathbf{y})=\sup_{\mathbf{x}} (\langle\mathbf{x},\mathbf{y}\rangle-h(\mathbf{x}) )$.
Then, if $f$ is $c$-concave (see Proposition 3.4 in \cite{GaMc}):
\begin{enumerate}
\item[\textit{a)}] the relation $\mathbf{s}(\mathbf{x})=\mathbf{x}-\nabla h^*(\nabla f(\mathbf{x}))$ defines a Borel function in the set where $f$ is differentiable, $\text{dom} (\nabla f)$,
\item[\textit{b)}] for all $\mathbf{x}\in \text{dom} (\nabla f)$ it holds that $\partial^c f(\mathbf{x})=\nabla^cf(\mathbf{x})=\{ \mathbf{s}(\mathbf{x})\}$,
\item[\textit{c)}] the set $\text{dom} ( f)\setminus\text{dom} (\nabla f)$ is of Lebesgue measure zero.
\end{enumerate}

Now, with all the ingredients above, a characterization of optimal transport plans and maps is given the next result, which summarizes
Theorems 1.2, 2.3 and 2.7 in \cite{GaMc}.

\begin{Theorem}\label{Theo:GaMc}
For any cost $c(\textbf{x},\textbf{y})=h(\textbf{x}-\textbf{y})$, satisfying {\em (A1)-(A3)}, and Borel probability measures $P,Q$ on $\R^d$:
\begin{enumerate}
\item[(i)] There exists at least an optimal transport plan. $\gamma\in \Pi(P,Q)$ is an optimal transport plan if and only if its support, $\text{\em Supp}(\gamma)$, is a $c$-cyclically monotone set, or, equivalently, if there exists a $c$-concave function $\psi$ such that 
$\text{\em Supp}(\gamma)\subset \partial^c\psi $. In this case $\psi$ is an optimal transport potential.
\item[(ii)]  If $P\ll \ell_d$, then there exists a unique optimal transport plan $\gamma:=(id\times T)\#P$, where $T(\textbf{x}):=\textbf{x}-\nabla h^*(\nabla \psi(\textbf{x}))=\nabla^c\psi (\textbf{x})$ is $P$-a.s. unique and the $c$-concave function $\psi$ is an optimal transport potential.
\end{enumerate} 
\end{Theorem}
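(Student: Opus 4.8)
The plan is to derive the statement by combining the quoted results of \cite{GaMc} (Lemma~\ref{lem:Ruschendorf}, Lemma~\ref{lem:Mccan}, and the properties \textit{a)}--\textit{c)} recalled above) with the classical rerouting argument for cyclical monotonicity and a soft duality computation, treating part (i) and part (ii) in turn. One may assume $\mathcal{T}_c(P,Q)<\infty$ throughout, since otherwise every $\gamma\in\Pi(P,Q)$ is trivially optimal and, under (A1)-(A3), \cite{GaMc} still provides the objects needed in the dual formulation.

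\textit{Part (i).} Existence of an optimal plan follows from the direct method: $\Pi(P,Q)$ is tight, because its elements have the fixed marginals $P,Q$, hence weakly sequentially compact by Prokhorov's theorem; and since $c(\mathbf{x},\mathbf{y})=h(\mathbf{x}-\mathbf{y})\geq 0$ is continuous, $\gamma\mapsto\int c\,d\gamma=\sup_{k}\int\min(c,k)\,d\gamma$ is a supremum of weakly continuous functionals, hence weakly lower semicontinuous; a minimiser therefore exists. To show that the support of an optimal $\gamma$ is $c$-cyclically monotone I would argue by contradiction: if points $(\mathbf{x}_k,\mathbf{y}_k)\in\text{Supp}(\gamma)$, $k=1,\dots,n$, and a permutation $\sigma$ violated the defining inequality, then, using continuity of $c$ to pick disjoint neighbourhoods around the $\mathbf{x}_k$ carrying a common small amount of $\gamma$-mass, one reroutes that mass from $\mathbf{y}_k$ to $\mathbf{y}_{\sigma(k)}$ to build a competitor in $\Pi(P,Q)$ of strictly smaller cost, contradicting optimality of $\gamma$. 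Lemma~\ref{lem:Ruschendorf} then yields a $c$-concave $\psi$ with $\text{Supp}(\gamma)\subset\partial^c\psi$. Conversely, if $\gamma$ is concentrated on $\partial^c\psi$ for some $c$-concave $\psi$, the equality case of $\psi(\mathbf{x})+\psi^c(\mathbf{y})\leq c(\mathbf{x},\mathbf{y})$ gives $\int c\,d\gamma=\int\psi\,dP+\int\psi^c\,dQ$, while for every $(f,g)\in\Phi_c(P,Q)$ and every $\gamma'\in\Pi(P,Q)$ one has $\int f\,dP+\int g\,dQ\le\int c\,d\gamma'$; comparing the two shows $\gamma$ is optimal and $(\psi,\psi^c)$ a dual maximiser, i.e.\ $\psi$ is an optimal transport potential, the integrability $\psi\in L_1(P)$, $\psi^c\in L_1(Q)$ being guaranteed under (A1)-(A3) as in \cite{GaMc}.

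\textit{Part (ii).} Assume $P\ll\ell_d$, take an optimal $\gamma$ and, by part (i), a $c$-concave potential $\psi$ with $\text{Supp}(\gamma)\subset\partial^c\psi$. Lemma~\ref{lem:Mccan} makes $\psi$ locally Lipschitz on the interior $\Omega$ of its domain, so by Rademacher's theorem $\psi$ is differentiable $\ell_d$-a.e.\ on $\Omega$; together with property \textit{c)} the set $\text{dom}(\psi)\setminus\text{dom}(\nabla\psi)$ is $\ell_d$-negligible, hence also $P$-negligible. For $\mathbf{x}\in\text{dom}(\nabla\psi)$, property \textit{b)} gives $\partial^c\psi(\mathbf{x})=\{\mathbf{x}-\nabla h^*(\nabla\psi(\mathbf{x}))\}$, so $\gamma$ is concentrated on the graph of $T(\mathbf{x}):=\mathbf{x}-\nabla h^*(\nabla\psi(\mathbf{x}))=\nabla^c\psi(\mathbf{x})$, which forces $\gamma=(\mathrm{id}\times T)_{\#}P$. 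Uniqueness follows by linearity: if $\gamma_1,\gamma_2$ were two optimal plans, then $\tfrac12(\gamma_1+\gamma_2)$ is optimal, hence concentrated on $\partial^c\tilde\psi$ for some $c$-concave $\tilde\psi$; thus each $\gamma_i$ is concentrated on $\partial^c\tilde\psi$ and the previous step yields $\gamma_i=(\mathrm{id}\times\tilde T)_{\#}P$ with the common map $\tilde T$, so $\gamma_1=\gamma_2$ and $T$ is $P$-a.s.\ unique.

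The main obstacle is the rerouting step establishing $c$-cyclical monotonicity of $\text{Supp}(\gamma)$: the geometric idea is standard, but making it rigorous requires some care — choosing the disjoint neighbourhoods and the transferred mass, defining the perturbed plan so that it still lies in $\Pi(P,Q)$, and estimating the cost increment uniformly using local boundedness of $c$ — and one must keep track of the gap between the topological support and sets of full $\gamma$-measure. The integrability of $\psi$ and $\psi^c$ needed for the duality computation is routine but not cosmetic, since without it $(\psi,\psi^c)$ need not belong to $\Phi_c(P,Q)$; here the structural hypotheses (A1)-(A3) are precisely what guarantees it. Everything else is bookkeeping on top of the regularity results of \cite{GaMc} quoted above.
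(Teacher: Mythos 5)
The paper does not actually prove this theorem: it is presented as a quotation summarizing Theorems~1.2, 2.3 and 2.7 of \cite{GaMc}, so there is no internal proof to compare against. Your reconstruction follows the standard Gangbo--McCann route (Prokhorov plus lower semicontinuity for existence, rerouting for $c$-cyclical monotonicity of the support, Lemma~\ref{lem:Ruschendorf} to produce a $c$-concave potential, duality for the converse implication, Rademacher plus the gradient formula for the map, and the averaging trick for uniqueness), so the outline is a faithful rendering of the cited source.

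One point deserves pushback. You attribute the integrability $\psi\in L_1(P)$, $\psi^c\in L_1(Q)$ entirely to (A1)--(A3); but those are structural conditions on $h$ alone and do not by themselves yield integrability of the potentials. In \cite{GaMc} integrability is obtained under a finiteness assumption such as $\int h(\mathbf{x}-\mathbf{y})\,dP(\mathbf{x})\,dQ(\mathbf{y})<\infty$, and your blanket assumption $\mathcal{T}_c(P,Q)<\infty$ is weaker than that (finiteness of the infimum does not bound the cost of an arbitrary coupling). Relatedly, the ``if'' direction of (i) requires first checking that a plan whose support is $c$-cyclically monotone actually has finite cost before the duality comparison
\begin{equation*}
\int c\,d\gamma=\int\psi\,dP+\int\psi^c\,dQ\le\int c\,d\gamma'
\end{equation*}
is meaningful; this implication is known to be delicate for unbounded costs and is precisely one of the things the cited theorems take care of. So the sketch is sound as a summary of \cite{GaMc}, but a genuinely self-contained proof would need to make these integrability and finite-cost steps explicit rather than folding them into ``(A1)--(A3) as in \cite{GaMc}''.
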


The approach in this work to CLT's for the empirical transportation cost relies on the stability results for optimal transport potentials that we prove in Section \ref{sec:asymp}. There cannot be any result in that sense without some kind of uniqueness of this potential. Of course, a look at \eqref{dual} shows that if $\psi$ is an optimal transport potential and $C\in\mathbb{R}$ then $\psi+C$ is also an optimal transport potential. With  the next results we show that, under some minimal assumptions, the optimal transport potential is unique up to the addition of a constant. 

\begin{Lemma}\label{Lemma:iqual}
Let $\Omega\subset \R^d$ be an open, bounded convex set, $f: \Omega \longrightarrow \R$ be a Lipschitz function such that $\nabla f=\mathbf{0}$ almost everywhere in $\Omega$, then there exists a constant $C\in  \R$ such that $f=C$ in $L$.  
\end{Lemma}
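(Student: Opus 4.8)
The plan is to prove the statement by mollification, invoking the convexity of $\Omega$ only to guarantee that its ``$\varepsilon$-interior'' remains connected. Since $f$ is Lipschitz on the bounded set $\Omega$, it is bounded, and its pointwise (equivalently, distributional) gradient agrees a.e.\ with a bounded measurable function, which vanishes a.e.\ by hypothesis. Fix a standard mollifier $\rho\in C_c^\infty(\R^d)$ with $\rho\geq 0$, $\int\rho=1$ and $\text{supp}(\rho)\subset B(\mathbf 0,1)$, and set $\rho_\varepsilon(\mathbf x)=\varepsilon^{-d}\rho(\mathbf x/\varepsilon)$. For $\varepsilon>0$ put $\Omega_\varepsilon:=\{\mathbf x\in\Omega:\ \mathbf x+B(\mathbf 0,\varepsilon)\subset\Omega\}$ and, for $\mathbf x\in\Omega_\varepsilon$, define $f_\varepsilon(\mathbf x):=(f*\rho_\varepsilon)(\mathbf x)=\int f(\mathbf y)\rho_\varepsilon(\mathbf x-\mathbf y)\,d\mathbf y$, which is well defined (the integrand is supported in $\Omega$) and of class $C^\infty$ on $\Omega_\varepsilon$.

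First I would check that $\Omega_\varepsilon$ is convex, hence connected: if $\mathbf x,\mathbf y\in\Omega_\varepsilon$ and $t\in[0,1]$, then $(t\mathbf x+(1-t)\mathbf y)+B(\mathbf 0,\varepsilon)=t(\mathbf x+B(\mathbf 0,\varepsilon))+(1-t)(\mathbf y+B(\mathbf 0,\varepsilon))\subset\Omega$ by convexity of $\Omega$. Since $\Omega$ is open and nonempty, $\Omega_\varepsilon\neq\emptyset$ for all small $\varepsilon$, and $\bigcup_{\varepsilon>0}\Omega_\varepsilon=\Omega$. Next, differentiating under the integral sign gives $\nabla f_\varepsilon(\mathbf x)=\int f(\mathbf y)(\nabla\rho_\varepsilon)(\mathbf x-\mathbf y)\,d\mathbf y$; an integration by parts in $\mathbf y$ (legitimate because $f$, being Lipschitz, lies in $W^{1,1}_{\text{loc}}$ and $\rho_\varepsilon(\mathbf x-\cdot)$ is compactly supported in $\Omega$ for $\mathbf x\in\Omega_\varepsilon$, so no boundary term appears) yields $\nabla f_\varepsilon=(\nabla f)*\rho_\varepsilon$ on $\Omega_\varepsilon$. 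As $\nabla f=\mathbf 0$ a.e.\ in $\Omega$, we conclude $\nabla f_\varepsilon\equiv\mathbf 0$ on the connected open set $\Omega_\varepsilon$, so $f_\varepsilon$ equals a constant $C_\varepsilon$ there.

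Finally I would let $\varepsilon\downarrow0$. Since $f$ is uniformly continuous on $\Omega$, $f_\varepsilon\to f$ locally uniformly on $\Omega$ as $\varepsilon\to0$ (the standard approximation property of mollifications of continuous functions). Given any $\mathbf x,\mathbf y\in\Omega$, the segment $K:=[\mathbf x,\mathbf y]$ is a compact subset of the convex open set $\Omega$, so $K\subset\Omega_\varepsilon$ for all $\varepsilon$ smaller than the distance from $K$ to $\partial\Omega$; for such $\varepsilon$, $f_\varepsilon(\mathbf x)=f_\varepsilon(\mathbf y)=C_\varepsilon$, and letting $\varepsilon\to0$ gives $f(\mathbf x)=f(\mathbf y)$. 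Hence $f$ is constant on $\Omega$. The only genuinely delicate points are the identity $\nabla(f*\rho_\varepsilon)=(\nabla f)*\rho_\varepsilon$ and the fact that the mollified domain $\Omega_\varepsilon$ inherits connectedness from the convexity of $\Omega$; once these are settled the conclusion is immediate. (An alternative, mollification-free route would be to use that a Lipschitz function is absolutely continuous along almost every line parallel to each coordinate axis, with a.e.\ derivative equal to the corresponding partial derivative, and then conclude via Fubini together with the continuity of $f$; the mollification argument seems cleanest.)
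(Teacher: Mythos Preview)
Your mollification argument is correct and complete: the key technical points (that $\Omega_\varepsilon$ inherits convexity from $\Omega$, that $\nabla(f*\rho_\varepsilon)=(\nabla f)*\rho_\varepsilon$ on $\Omega_\varepsilon$ via the $W^{1,\infty}_{\mathrm{loc}}$ regularity of Lipschitz functions, and that $f_\varepsilon\to f$ pointwise) are all handled properly, and the passage to the limit is clean.

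The paper takes a different and much shorter route: it simply invokes the Poincar\'e inequality on convex domains (Theorem~3.2 in \cite{AcDu}), which gives $\|f-\bar f\|_{L^1(\Omega)}\le C(\Omega)\|\nabla f\|_{L^1(\Omega)}$ with $\bar f$ the average of $f$ over $\Omega$; since $\nabla f=\mathbf 0$ a.e., this forces $f=\bar f$ a.e., and continuity of $f$ upgrades this to equality everywhere. Your approach is more elementary and entirely self-contained, requiring no external analytic inequality, at the price of being longer; the paper's approach is a one-line appeal to a known estimate. Both use convexity in an essential but mild way (you, to keep $\Omega_\varepsilon$ connected; the paper, to have the Poincar\'e inequality available), and both would extend with minor adjustments to connected open sets with sufficiently regular geometry.
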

\begin{proof}
This is a straightforward consequence of Poincar\'e's inequality in convex domains (see, e.g., Theorem 3.2. in \cite{AcDu}).
\end{proof}

\begin{Theorem}\label{theo:iqual}
Assume $c(\mathbf{x},\mathbf{y})=h(\mathbf{x}-\mathbf{y})$ satisfies {\em (A1)-(A3)} and $f_1,f_2$ are $c$-concave functions such that 
$\nabla f_1=\nabla f_2$ almost everywhere in the open connected set $\Omega$, then there exists a constant $C\in  \R$ such that $f_2=f_1+C$ in $\Omega$.  
\end{Theorem}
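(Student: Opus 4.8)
The plan is to deduce the global conclusion from the local result in Lemma~\ref{Lemma:iqual} by a connectedness argument on $\Omega$. First, I would record that each $f_i$ ($i=1,2$) is, by Lemma~\ref{lem:Mccan}, finite and locally Lipschitz on the interior $\Omega_i$ of a convex set that contains $\text{dom}(f_i)$, and that $\text{dom}(f_i)\setminus\text{dom}(\nabla f_i)$ is Lebesgue-null. The hypothesis that $\nabla f_1=\nabla f_2$ almost everywhere on $\Omega$ only makes sense if $f_1,f_2$ are differentiable (hence finite) at almost every point of $\Omega$; since any open set meeting the complement of $\Omega_i$ contains a subset of positive Lebesgue measure lying outside $\text{dom}(f_i)$, this forces $\Omega\subset\Omega_1\cap\Omega_2$. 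Therefore $g:=f_2-f_1$ is a well-defined, locally Lipschitz function on $\Omega$ with $\nabla g=\nabla f_2-\nabla f_1=\mathbf{0}$ almost everywhere on $\Omega$, and it suffices to show that $g$ is constant on $\Omega$.

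Next I would localize. Fixing $\mathbf{x}_0\in\Omega$ and choosing $r>0$ with $\overline{B(\mathbf{x}_0,r)}\subset\Omega$, the function $g$ is Lipschitz on the bounded open convex ball $B(\mathbf{x}_0,r)$: a local Lipschitz bound for $g$ can be upgraded to a uniform one on the compact convex set $\overline{B(\mathbf{x}_0,r)}$ by covering it with finitely many balls on which $g$ is Lipschitz and chaining the estimates along segments. Since also $\nabla g=\mathbf{0}$ almost everywhere on $B(\mathbf{x}_0,r)$, Lemma~\ref{Lemma:iqual} provides a constant $C_{\mathbf{x}_0}\in\R$ with $g\equiv C_{\mathbf{x}_0}$ on $B(\mathbf{x}_0,r)$. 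Thus $g$ is locally constant, and in particular continuous, on $\Omega$.

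Finally I would invoke connectedness: picking $\mathbf{x}_1\in\Omega$ and setting $C:=g(\mathbf{x}_1)$, the set $\{\mathbf{x}\in\Omega:\ g(\mathbf{x})=C\}$ is nonempty, open (because $g$ is locally constant) and closed in $\Omega$ (because $g$ is continuous), hence equals the connected set $\Omega$. This gives $f_2=f_1+C$ on $\Omega$, as claimed.

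The only substantive ingredient is Lemma~\ref{Lemma:iqual}, i.e.\ Poincar\'e's inequality on convex domains, which is already available; the rest is routine. The step that most deserves care is the reduction to a \emph{genuinely} Lipschitz (not merely locally Lipschitz) function on each ball, so that Lemma~\ref{Lemma:iqual} applies verbatim; this is exactly where the local Lipschitz regularity of $c$-concave functions from Lemma~\ref{lem:Mccan} enters, together with the observation (used above) that the differentiability hypothesis pins $\Omega$ inside the interiors of the two domains.
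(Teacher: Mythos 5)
Your proposal is correct and follows essentially the same route as the paper's proof: use Lemma~\ref{lem:Mccan} to get local Lipschitz regularity, apply the Poincar\'e-based Lemma~\ref{Lemma:iqual} on small balls to get local constancy of $f_2-f_1$, and finish with the standard open-and-closed argument on the connected set $\Omega$. The only difference is that you fill in two details the paper leaves implicit — you \emph{derive} the inclusion $\Omega\subset\text{dom}(f_1)\cap\text{dom}(f_2)$ from the a.e.\ differentiability hypothesis and the convexity of the domains given by Lemma~\ref{lem:Mccan}, whereas the paper simply assumes it; and you spell out the chaining argument upgrading local Lipschitz to genuine Lipschitz on a fixed compact ball, whereas the paper restricts to a ball small enough that the local Lipschitz constant is already uniform.
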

\begin{proof}
Assume $\mathbf{p}\in \Omega\subset\text{dom}(f_1)\cap \text{dom}(f_2)$. By Lemma \ref{lem:Mccan} $\phi, \psi$ are locally Lipschitz, hence, there exist $\epsilon_{\mathbf{p}}>0$ such that $f_1, f_2$ are Lipschitz in $B(\mathbf{p},\epsilon_{\mathbf{p}})$. Then the function $f_2-f_1$ satisfies the assumptions of Lemma~\ref{Lemma:iqual}. As a consequence, there exists $C_{\mathbf{p}}\in \R$ such that $f_2=f_1+C_{\mathbf{p}}$ in $\mathbb{B}(\mathbf{p},\epsilon_{\mathbf{p}})$ for each $\mathbf{p}\in L$. The proof will be complete if we show that the previous constant does not depend on $\mathbf{p}$. But this follows from connectedness of the $\Omega$, since if we set
$$ \Gamma:=\{ \mathbf{q}\in \Omega: C_{\mathbf{q}}=C_{\mathbf{p}}\}$$
then $\Gamma$ is obviously open and, by continuity, it is also closed, hence, $\Gamma=\Omega$.
\end{proof}

Let us assume now that $P$ and $Q$ are probabilities on $\mathbb{R}^d$ wit $P$ absolutely continuous and $\psi_1$, $\psi_2$ are optimal transport potentials. By Theorem \ref{Theo:GaMc} we have $\nabla h^*(\nabla \psi_1(\mathbf{x}))=\nabla h^*(\nabla \psi_2(\mathbf{x}))$ $P$-a.s.. If $h$ is differentiable then $\nabla h^*(\nabla \psi_1(\mathbf{x}))=\nabla h^*(\nabla \psi_2(\mathbf{x}))=\mathbf{y}$ implies 
$\nabla \psi_1(\mathbf{x})=\nabla \psi_2(\mathbf{x})=\nabla h(\mathbf{y})$. Hence, $P$-a.s., $\nabla \psi_1(\mathbf{x})=\nabla \psi_2(\mathbf{x})$. If, additionally, $P$ is supported in an open, connected set we can apply Theorem \ref{theo:iqual} and conclude that $\psi_2=\psi_1+C$ on the support of $P$ for some constant $C\in\mathbb{R}$. This proves the following uniqueness result for optimal transport potentials.

\begin{Corollary}\label{cor:uniqueness}
If $c(\mathbf{x},\mathbf{y})=h(\mathbf{x}-\mathbf{y})$, where $h$ is differentiable and satisfies {\em (A1)-(A3)}, $P\ll \ell_d$ and is supported on an open, connected set, $A$, and $\psi_1,\psi_2$ are optimal transport potentials from $P$ to $Q$ for the cost $c$, then, there exists a constant, $C\in\mathbb{R}$, such that $\psi_2(\mathbf{x})=\psi_1(\mathbf{x})+C$ for every $\mathbf{x}\in A$.
\end{Corollary}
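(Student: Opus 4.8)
The plan is to assemble the statement directly from the ingredients the paper has already put in place. By Theorem~\ref{Theo:GaMc}(ii), since $P\ll\ell_d$, both optimal transport plans $(\mathrm{id}\times T_i)\#P$ associated with potentials $\psi_i$ are given by $T_i(\mathbf{x})=\mathbf{x}-\nabla h^*(\nabla\psi_i(\mathbf{x}))$ $P$-a.s., and moreover the optimal map is $P$-a.s.\ unique, so $T_1=T_2$ $P$-a.s.; that is, $\nabla h^*(\nabla\psi_1(\mathbf{x}))=\nabla h^*(\nabla\psi_2(\mathbf{x}))$ $P$-a.s.

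First I would turn this into an equality of gradients of the potentials. Here the differentiability of $h$ is used: if $\mathbf{y}:=\nabla h^*(\nabla\psi_1(\mathbf{x}))=\nabla h^*(\nabla\psi_2(\mathbf{x}))$, then by the standard duality between a strictly convex function and its conjugate (Fenchel's relation, with $h$ differentiable) one has $\nabla\psi_i(\mathbf{x})=\nabla h(\mathbf{y})$ for $i=1,2$, whence $\nabla\psi_1(\mathbf{x})=\nabla\psi_2(\mathbf{x})$. This holds $P$-a.s. Since $P$ is absolutely continuous and supported on the open connected set $A$, the set where this fails is $\ell_d$-negligible inside $A$; combined with part (c) of the Gangbo--McCann differentiability facts (the non-differentiability set of each $\psi_i$ is $\ell_d$-null), we get $\nabla\psi_1=\nabla\psi_2$ $\ell_d$-almost everywhere on $A$.

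Next I would invoke Theorem~\ref{theo:iqual} with $\Omega=A$: the two functions $\psi_1,\psi_2$ are $c$-concave, $A$ is open and connected, and $\nabla\psi_1=\nabla\psi_2$ a.e.\ on $A$, so there is a constant $C\in\R$ with $\psi_2=\psi_1+C$ on $A$. That is exactly the claimed conclusion, so nothing further is needed.

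The only genuinely delicate point is the passage ``$\nabla h^*(\nabla\psi_1)=\nabla h^*(\nabla\psi_2)$ implies $\nabla\psi_1=\nabla\psi_2$'', i.e.\ that we may ``invert'' $\nabla h^*$ on the relevant range; this is where strict convexity of $h$ (assumption (A1)) together with differentiability of $h$ enters, guaranteeing that $\nabla h$ and $\nabla h^*$ are inverse to each other on $\mathrm{dom}(\nabla h)$ and $\mathrm{dom}(\nabla h^*)$ respectively, so the claimed implication is just the Fenchel equality $\nabla\psi_i(\mathbf{x})=\nabla h(\nabla h^*(\nabla\psi_i(\mathbf{x})))$. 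Everything else is bookkeeping with null sets and a direct appeal to Theorem~\ref{theo:iqual}, so I expect no real obstacle beyond making this inversion step precise.
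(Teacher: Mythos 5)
Your argument is correct and follows essentially the same route as the paper's own proof: read off $P$-a.s.\ uniqueness of the map from Theorem~\ref{Theo:GaMc}(ii), invert $\nabla h^*$ via the Fenchel relation (this is where differentiability of $h$ enters), and then apply Theorem~\ref{theo:iqual}. One small caution on a step that both you and the paper treat as immediate: $P\ll\ell_d$ only gives $\ell_d\text{-null}\Rightarrow P\text{-null}$, not the converse, so $\nabla\psi_1=\nabla\psi_2$ $P$-a.s.\ does not automatically upgrade to $\ell_d$-a.e.\ equality on $A$ even when $\text{supp}(P)=A$, since the density of $P$ may vanish on a positive-measure subset of $A$. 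This is repaired by observing that $\text{supp}(P)=A$ forces the $P$-full set $S=\{\nabla\psi_1=\nabla\psi_2\}\cap\text{dom}(\nabla\psi_1)\cap\text{dom}(\nabla\psi_2)$ to be dense in $A$, and the continuity of $\nabla^c\psi_i$ at points of differentiability (Lemma~\ref{lemma:cont_c}), together with invertibility of $\nabla h^*$, then propagates the equality $\nabla\psi_1=\nabla\psi_2$ to every point of $\text{dom}(\nabla\psi_1)\cap\text{dom}(\nabla\psi_2)$, i.e.\ $\ell_d$-a.e.\ on $A$, which is what Theorem~\ref{theo:iqual} requires.
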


In the next section we will state and prove results related to the stability of optimal transport maps and potentials, namely, we will prove convergence in different senses of optimal transport potentials ($\varphi_n$) or maps ($\nabla^c \varphi_n$) from $P_n$ to $Q_n$ under the assumption that 
(at least) $P_n\to_w $ and $Q_n\to Q$. Results of this kind have a long history, tracing back at least to \cite{Cuestaetal97} for the case of optimal transport maps under quadratic costs. Stability of the potentials is crucial for the Efron-Stein approach to CLTs in~\cite{BaLo} or in section 4 in this paper, and has only been investigated recently. For smooth probabilities, optimal transport potentials are a.s. differentiable, and there is a simple relation between their gradients and the optimal transport maps, as noted above. Hence, it is natural to try to go from stability results for optimal maps to stability results for optimal potentials. We should note, additionally, that the points of nondifferentiability of the potentials are those points in which the superdifferentials are not singletons and that, for this reason, the better way to deal with stability of the optimal plans is to think of them as multivalued maps ($\mathbf{x}\mapsto \partial^c \varphi_n(\mathbf{x})\subset \mathbb{R}^d)$ or, equivalently, as subsets ($\partial^c \varphi_n=\{(\mathbf{x},\mathbf{y})\in\mathbb{R}^d\times \mathbb{R}^d:\, \mathbf{y}\in\partial^c \varphi_n(\mathbf{x})\}$, the \textit{graph} of $\partial^c \varphi_n$).
The notion of convergence that fits our goals is the commonly called Painlev\'e-Kuratowski convergence (see \cite{RoWe}), which is defined as follows: for a sequence $\{ \Gamma_n\}_{n\in \N}$ of subsets of $\R^{m}$
\begin{itemize}
\item the outer limit, $\lim\sup_n \Gamma_n$, is the set of $\mathbf{x}\in \R^{m}$ for which there exists a sequence $\{\mathbf{x}_{n}\}$ with $\mathbf{x}_n\in \Gamma_n$ such that there exists a subsequence which converges to $\mathbf{x}$,
\item the inner limit, $\lim \inf _n \Gamma_n$,  is the set of $\mathbf{x}\in \R^{m}$ for which there exists a sequence $\{\mathbf{x}_{n}\}$ with $\mathbf{x}_n\in \Gamma_n$ which converges to $\mathbf{x}$.
\end{itemize}
When the outer and inner limit sets are equal the sequence is said to converge in the Painlev\'e-Kuratowski sense and the common set is the limit. This notion of convergence is automatically transferred easily to multivalued maps. In this case $\{T_n\}_{n\in \N}$, where $T_n:\R^d \rightarrow 2^{\R^d}$, is said to \emph{converge graphically} to another multivalued map $T$ if 
$$\text{Gph}(T_n):=\{(\mathbf{x}, \mathbf{y}): \mathbf{y}\in T_n(\mathbf{x}) \}\to \text{Gph}(T)$$
in the Painlev\'e-Kuratowski sense. A very convenient feature of the Painlev\'e-Kuratowski sense is that sequential compactness can be easily described in terms of a simple condition. To be precise, it is said that a sequence of sets $\Gamma_n\subset \mathbb{R}^d$, $n\geq 1$, does not \textit{escape to the horizon} if there exist $\epsilon>0$ and some subsequence $\{n_j\}$ such that $\Gamma_{n_j}\cap B(\mathbf{0}, \epsilon)\neq\emptyset$ for all $j\geq 1$. For convenience we quote next a version of Theorem 4.18 in \cite{RoWe}.
\begin{Theorem}\label{Teo:Horizon}
Let $\{ \Gamma_n\}_{n\geq 1}$ be a sequence of subsets of $\R^{m}$ that does not escape to horizon. Then there exists a subsequence $\{n_{j_k}\}$ and a nonempty subset $\Gamma \subset {\R^{m}}$ such that  
$$\Gamma_{n_{j_k}}\longrightarrow \Gamma,  \ \ \text{in the sense of Painlev\'e-Kuratowski.}$$
\end{Theorem}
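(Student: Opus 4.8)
The plan is to split the statement into two independent parts: first, the purely topological fact that \emph{any} sequence of subsets of $\R^m$ has a subsequence converging, in the Painlev\'e-Kuratowski sense, to some (a priori possibly empty) closed set; and second, that the hypothesis of not escaping to the horizon forces that limit set to be nonempty. This is exactly the content of Theorem 4.18 in \cite{RoWe}, and I would follow that argument. I note at the outset that the sets $\Gamma_n$ enter the discussion only through hitting conditions ``$\Gamma_n\cap U\neq\emptyset$'' for open $U$ and through the distances $d(\mathbf{x},\Gamma_n)$, both of which are unaffected by replacing each $\Gamma_n$ by its closure; so there is no loss in pretending the $\Gamma_n$ are closed.

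For the first part, I would fix a countable base $\{B_i\}_{i\geq 1}$ for the topology of $\R^m$ (open balls with rational centres and rational radii). For each $i$ the sequence $\big(\mathbf{1}[\Gamma_n\cap B_i\neq\emptyset]\big)_n$ takes values in $\{0,1\}$, hence has a convergent subsequence, and a Cantor diagonal extraction produces one subsequence $\{n_k\}$ along which, for every $i$, either $\Gamma_{n_k}\cap B_i\neq\emptyset$ for all large $k$ (write $i\in\mathcal{G}$ in this case) or $\Gamma_{n_k}\cap B_i=\emptyset$ for all large $k$. Since the inclusion $\liminf_k\Gamma_{n_k}\subseteq\limsup_k\Gamma_{n_k}$ always holds, it suffices to establish the reverse inclusion for this subsequence. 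Take $\mathbf{x}\in\limsup_k\Gamma_{n_k}$ and $\epsilon>0$: by definition of the outer limit, $\Gamma_{n_k}\cap B(\mathbf{x},\epsilon)\neq\emptyset$ for infinitely many $k$; choosing a basic ball $B_i$ with $\mathbf{x}\in B_i\subseteq B(\mathbf{x},\epsilon)$ forces $i\in\mathcal{G}$, whence $\Gamma_{n_k}\cap B_i\neq\emptyset$, and a fortiori $\Gamma_{n_k}\cap B(\mathbf{x},\epsilon)\neq\emptyset$, for \emph{all} large $k$. As $\epsilon>0$ was arbitrary, $d(\mathbf{x},\Gamma_{n_k})\to 0$, i.e. $\mathbf{x}\in\liminf_k\Gamma_{n_k}$. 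Thus $\limsup_k\Gamma_{n_k}=\liminf_k\Gamma_{n_k}=:\Gamma$ and $\Gamma_{n_k}\to\Gamma$ in the Painlev\'e-Kuratowski sense.

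For the second part I would use the hypothesis: there are $\epsilon>0$ and a subsequence $\{n_j\}$ with $\Gamma_{n_j}\cap B(\mathbf{0},\epsilon)\neq\emptyset$ for every $j$. Running the diagonal extraction above \emph{inside} $\{n_j\}$ produces a subsequence $\{n_{j_k}\}$ with $\Gamma_{n_{j_k}}\to\Gamma$ and $\Gamma_{n_{j_k}}\cap B(\mathbf{0},\epsilon)\neq\emptyset$ for all $k$. Picking $\mathbf{x}_k\in\Gamma_{n_{j_k}}\cap\overline{B(\mathbf{0},\epsilon)}$ and using compactness of the closed ball, some subsequence $\mathbf{x}_{k_l}\to\mathbf{x}_*$; then, directly from the definition of the outer limit, $\mathbf{x}_*\in\limsup_k\Gamma_{n_{j_k}}=\Gamma$, so $\Gamma\neq\emptyset$ and $\{n_{j_k}\}$ is the required subsequence.

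I expect the only delicate point to be the bridging step in the middle of the first part: membership of $\mathbf{x}$ in the outer limit only guarantees that $\Gamma_{n_k}$ meets $B(\mathbf{x},\epsilon)$ for infinitely many $k$, whereas membership in the inner limit requires it to meet $B(\mathbf{x},\epsilon)$ for \emph{all} large $k$. Upgrading ``infinitely often'' to ``eventually'' is precisely what the diagonal selection over the countable base $\{B_i\}$ is engineered to deliver, and once that is in place everything else is routine bookkeeping.
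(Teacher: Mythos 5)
Your proof is correct, but note that the paper does not actually prove this statement: it is explicitly presented as ``a version of Theorem 4.18 in \cite{RoWe}'' and invoked as a quotation from that textbook, so there is no in-paper argument to compare against. What you have written is a faithful reconstruction of the standard Rockafellar--Wets argument: a Cantor diagonal extraction over a countable base of balls to stabilize every hitting indicator $\mathbf{1}[\Gamma_n\cap B_i\neq\emptyset]$, which forces $\limsup_k\Gamma_{n_k}\subseteq\liminf_k\Gamma_{n_k}$ along the extracted subsequence, followed by the non-escape hypothesis and compactness of $\overline{B(\mathbf{0},\epsilon)}$ to exhibit a point of the limit set. You are also right that replacing each $\Gamma_n$ by its closure costs nothing.

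One presentational point in the bridging step: you write that ``$\Gamma_{n_k}\cap B(\mathbf{x},\epsilon)\neq\emptyset$ for infinitely many $k$; choosing a basic ball $B_i$ with $\mathbf{x}\in B_i\subseteq B(\mathbf{x},\epsilon)$ forces $i\in\mathcal{G}$.'' As phrased the implication is elided: knowing that $\Gamma_{n_k}$ meets the \emph{larger} ball $B(\mathbf{x},\epsilon)$ infinitely often does not by itself say that it meets the smaller $B_i$. What you want is to apply the cluster-point property of $\mathbf{x}\in\limsup_k\Gamma_{n_k}$ directly to $B_i$ (any open neighborhood of $\mathbf{x}$ is met infinitely often), which puts $i\in\mathcal{G}$ and thereby upgrades ``infinitely often'' to ``eventually'' for $B_i\subseteq B(\mathbf{x},\epsilon)$. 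This is clearly what you intend, but the quantifiers should land on $B_i$, not on $B(\mathbf{x},\epsilon)$. Everything else is sound.
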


In the next theorem we show that when a sequence of $c$-ciclically monotone sets converges in the sense of Painlev\'e-Kuratowski to a set, then it is also $c$-ciclically monotone, generalizing the result for classical convexity in \cite{RoWe}.

\begin{Lemma}\label{lemma:convergence_c}
Assume $c$ is a continuous cost function and $\{ \Gamma_n\}_{n\in \N}\subset \R^{2d}$ is a sequence of $c$-ciclically monotone sets. If $\Gamma_n\to \Gamma$ in the sense of Painlev\'e-Kuratowski, then $\Gamma$ is also $c$-ciclically monotone.
\end{Lemma}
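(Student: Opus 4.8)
The plan is to reduce the statement to a direct limiting argument exploiting the inner-limit half of Painlev\'e--Kuratowski convergence. Fix $n\in\N$, a finite family $\{(\mathbf{x}_k,\mathbf{y}_k)\}_{k=1}^{n}\subset \Gamma$ and a permutation $\sigma$ of $\{1,\dots,n\}$; we must check the cyclical monotonicity inequality for this data. Since $\Gamma_m\to\Gamma$ in the Painlev\'e--Kuratowski sense, in particular $\Gamma\subset \liminf_m \Gamma_m$, so for each $k\in\{1,\dots,n\}$ there is a sequence $(\mathbf{x}_k^{m},\mathbf{y}_k^{m})\in\Gamma_m$ with $(\mathbf{x}_k^{m},\mathbf{y}_k^{m})\to(\mathbf{x}_k,\mathbf{y}_k)$ as $m\to\infty$. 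The key point is that the inner limit furnishes genuine convergent sequences (not merely convergent subsequences), which is exactly what allows us to pass to the limit simultaneously in all $2n$ coordinates.

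Next, for each fixed $m$ the points $(\mathbf{x}_1^{m},\mathbf{y}_1^{m}),\dots,(\mathbf{x}_n^{m},\mathbf{y}_n^{m})$ all lie in the $c$-cyclically monotone set $\Gamma_m$, so by definition
\[
\sum_{k=1}^{n} c(\mathbf{x}_k^{m},\mathbf{y}_k^{m})\ \leq\ \sum_{k=1}^{n} c(\mathbf{x}_{\sigma(k)}^{m},\mathbf{y}_k^{m}).
\]
Letting $m\to\infty$ and using that $c$ is continuous, each term $c(\mathbf{x}_k^{m},\mathbf{y}_k^{m})$ converges to $c(\mathbf{x}_k,\mathbf{y}_k)$ and each term $c(\mathbf{x}_{\sigma(k)}^{m},\mathbf{y}_k^{m})$ converges to $c(\mathbf{x}_{\sigma(k)},\mathbf{y}_k)$; since both sides are finite sums, the inequality is preserved in the limit, giving
\[
\sum_{k=1}^{n} c(\mathbf{x}_k,\mathbf{y}_k)\ \leq\ \sum_{k=1}^{n} c(\mathbf{x}_{\sigma(k)},\mathbf{y}_k).
\]
As $n$, the chosen points, and $\sigma$ were arbitrary, $\Gamma$ is $c$-cyclically monotone.

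I do not expect a genuine obstacle here: the argument is a clean continuity-plus-limit passage, and no uniformity, compactness, or domination of the cost is needed beyond plain continuity and the fact that cyclical monotonicity only involves finitely many points at a time. The one place warranting a word of care is the invocation of the inner-limit part of the convergence hypothesis, so that for each of the finitely many base points a full convergent sequence (rather than a subsequence along some point-dependent index set) is available; the outer-limit half of Painlev\'e--Kuratowski convergence is not used in the proof.
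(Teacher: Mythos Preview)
Your proof is correct and follows essentially the same approach as the paper's own proof: pick approximating sequences for each of the finitely many points via the inner-limit part of Painlev\'e--Kuratowski convergence, apply $c$-cyclical monotonicity of $\Gamma_m$ at level $m$, and pass to the limit using continuity of $c$. The only minor remark is notational: you reuse the symbol $n$ for the number of points in the finite family while the statement already uses $n$ as the index of the sequence $\{\Gamma_n\}$; renaming one of them would avoid ambiguity.
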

\begin{proof}
We consider $\{(\mathbf{x}_k,\mathbf{y}_k)\}_{k=1}^{m} \subset \Gamma $. For each pair $(\mathbf{x}_k,\mathbf{y}_k)$ there exists a sequence $(\mathbf{x}_k^n,\mathbf{y}_k^n)\in \Gamma_n$ such that $(\mathbf{x}_k^n,\mathbf{y}_k^n)\rightarrow (\mathbf{x}_k,\mathbf{y}_k) $ as $n\to \infty$. 
Since $\Gamma_n$ is $c$-ciclically monotone,
\begin{align*}
	\sum_{k=1}^m c(\mathbf{x}^n_k,\mathbf{y}^n_k)\leq \sum_{k=1}^m c(\mathbf{x}^n_{\sigma(k)},\mathbf{y}^n_k),
\end{align*}
for every permutation $\sigma$ of $\{1, \dots, m\}$. Continuity of $c$ guarantees that
\begin{align*}
\sum_{k=1}^m c(\mathbf{x}_k,\mathbf{y}_k)\leq \sum_{k=1}^m c(\mathbf{x}_{\sigma(k)},\mathbf{y}_k).
\end{align*}
\end{proof}
Combining the last two results we see that if a sequence of $c$-superdifferen\-tials does not escape to the horizon, then there exists a converging subsequence to a set and this set is also $c$-ciclically monotone. \\ \\
We finish the section with a weak continuity result for the multivalued map $\partial^c \psi$, which will be very useful in the following section.{
\begin{Lemma}\label{lemma:cont_c}
Assume $c(\mathbf{x},\mathbf{y})=h(\mathbf{x}-\mathbf{y})$ with $h$ {satisfying {\em (A1)-(A3)}}. Let $f$ be a $c-$concave function and $\mathbf{x}\in \text{\em dom}(\nabla^cf)$. Then for each sequence $\mathbf{x}_n\rightarrow \mathbf{x}$ and $\mathbf{y}_n\in \partial^c f(\mathbf{x}_n)$ we have that $\mathbf{y}_n\rightarrow \nabla^c f(\mathbf{x})$.
As a consequence, for each $\epsilon>0$ there exists some $\delta>0$ such that $\partial^c f(B(\mathbf{x},\delta))\subset B(\nabla^cf(\mathbf{x}), \epsilon)$.
\end{Lemma}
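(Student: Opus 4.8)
The plan is a compactness-and-contradiction argument; the only delicate point is an a priori bound on $\{\mathbf{y}_n\}$, which I extract from the superlinear growth condition (A3). Since $\mathbf{x}\in\text{dom}(\nabla^c f)$, the set $\partial^c f(\mathbf{x})=\{\nabla^c f(\mathbf{x})\}$ is a nonempty singleton; in particular $f$ is finite near $\mathbf{x}$ and, by Lemma~\ref{lem:Mccan}, there are $r,L>0$ with $f$ being $L$-Lipschitz on $\overline{B}(\mathbf{x},2r)$. For $n$ large we have $\mathbf{x}_n\in B(\mathbf{x},r)$, hence $\overline{B}(\mathbf{x}_n,r)\subset\overline{B}(\mathbf{x},2r)$. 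Writing $\mathbf{u}_n:=\mathbf{x}_n-\mathbf{y}_n$ and testing the defining inequality of the $c$-superdifferential of $f$ at $\mathbf{x}_n$,
\[
f(\mathbf{z})\le f(\mathbf{x}_n)+h(\mathbf{z}-\mathbf{y}_n)-h(\mathbf{x}_n-\mathbf{y}_n),\qquad \mathbf{z}\in\R^d,
\]
with points $\mathbf{z}=\mathbf{x}_n+\mathbf{w}$, $|\mathbf{w}|\le r$, and using the Lipschitz bound $f(\mathbf{z})-f(\mathbf{x}_n)\ge -Lr$, one gets $h(\mathbf{u}_n+\mathbf{w})\ge h(\mathbf{u}_n)-Lr$ for all $|\mathbf{w}|\le r$.

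Next I show $\{\mathbf{u}_n\}$ is bounded. If $|\mathbf{u}_n|\to\infty$ along a subsequence, apply the last inequality with $\mathbf{w}=-r\,\mathbf{u}_n/|\mathbf{u}_n|$ and consider the convex function $g(t):=h(t\,\mathbf{u}_n/|\mathbf{u}_n|)$, $t\ge 0$: then $g(|\mathbf{u}_n|-r)\ge g(|\mathbf{u}_n|)-Lr$, so the difference quotient of $g$ over $[|\mathbf{u}_n|-r,|\mathbf{u}_n|]$ is at most $L$ and, by monotonicity of difference quotients of convex functions, $g(|\mathbf{u}_n|-r)\le g(0)+L(|\mathbf{u}_n|-r)=h(\mathbf{0})+L(|\mathbf{u}_n|-r)$. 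Thus the vectors $\mathbf{v}_n:=(|\mathbf{u}_n|-r)\,\mathbf{u}_n/|\mathbf{u}_n|$ satisfy $|\mathbf{v}_n|\to\infty$ while $h(\mathbf{v}_n)/|\mathbf{v}_n|$ stays bounded, contradicting (A3). Hence $\{\mathbf{u}_n\}$, and therefore $\{\mathbf{y}_n\}$, is bounded.

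With boundedness in hand I would identify the limit points of $\{\mathbf{y}_n\}$: if $\mathbf{y}_{n_k}\to\mathbf{y}^\ast$, passing to the limit along $n_k$ in the displayed inequality, using continuity of $f$ at $\mathbf{x}$ (Lemma~\ref{lem:Mccan}) and continuity of the finite convex function $h$, yields $f(\mathbf{z})\le f(\mathbf{x})+h(\mathbf{z}-\mathbf{y}^\ast)-h(\mathbf{x}-\mathbf{y}^\ast)$ for all $\mathbf{z}\in\R^d$, i.e.\ $\mathbf{y}^\ast\in\partial^c f(\mathbf{x})=\{\nabla^c f(\mathbf{x})\}$. So every subsequential limit of the bounded sequence $\{\mathbf{y}_n\}$ equals $\nabla^c f(\mathbf{x})$, whence $\mathbf{y}_n\to\nabla^c f(\mathbf{x})$. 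Finally, the ``consequence'' is the contrapositive: if for some $\epsilon>0$ and every $\delta>0$ one had $\partial^c f(B(\mathbf{x},\delta))\not\subset B(\nabla^c f(\mathbf{x}),\epsilon)$, then taking $\delta=1/n$ would produce $\mathbf{x}_n\to\mathbf{x}$ and $\mathbf{y}_n\in\partial^c f(\mathbf{x}_n)$ with $|\mathbf{y}_n-\nabla^c f(\mathbf{x})|\ge\epsilon$, contradicting the convergence just proved. I expect the a priori boundedness of $\{\mathbf{y}_n\}$ (the step invoking (A3)) to be the only genuine obstacle; the passage to the limit and the singleton conclusion are routine.
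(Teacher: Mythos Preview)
Your proof is correct and follows essentially the same scheme as the paper's: establish boundedness of $\{\mathbf{y}_n\}$, pass to the limit in the $c$-superdifferential inequality to force any cluster point into the singleton $\partial^c f(\mathbf{x})$, and deduce the second assertion by contradiction. The only substantive difference is that the paper obtains boundedness by quoting Proposition~C.4 of \cite{GaMc} (which bounds $\partial^c f(U)$ for compact $U$ in the domain), whereas you give a self-contained argument extracting the bound directly from the local Lipschitz property and the superlinear growth (A3); your version is slightly more elementary and avoids the external reference, at the cost of a short extra computation.
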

\begin{proof}
Let $(\mathbf{x}_n, \mathbf{y}_n)$ be as in the statement. Then for every $\mathbf{z}\in \R^d$ we have
\begin{align}\label{eq:take_limits11}
f(\mathbf{z})\leq f(\mathbf{x}_n)+[c(\mathbf{z},\mathbf{y}_n)-c(\mathbf{x}_n,\mathbf{y}_n)].
\end{align}
Since $\psi$ is differentiable at $\mathbf{x}$, it is bounded in a neighborhood of $\mathbf{x}$, say $U$, which can choose to be compact. By Proposition C.4 in \cite{GaMc} $\partial^c f(U)$ is bounded. Hence, the sequence $\mathbf{y}_n$
must be bounded and, taking subsequences if necessary, we can assume that it is convergent.
Taking limits in \eqref{eq:take_limits11} and noticing that $f$ is continuous in its domain we get the first conclusion. To check the second claim, assume it is false. Then we can choose some $\epsilon>0$ such that for each $n\in \N$ there exists $|\mathbf{x}_n-\mathbf{x}|\leq \frac{1}{n} $ and some $\mathbf{y}_n\in \partial^cf(\mathbf{x}_n)$ with $|\mathbf{y}_n-\nabla^cf(\mathbf{x})|>\epsilon$. To conclude note that the sequences $\{\mathbf{x}_n\}_{n \in \N}$ and $\{\mathbf{y}_n\}_{n \in \N}$ lead to a contradiction with the first assertion.
\end{proof}
}

\section{Stability of Optimal Transport Potential and Map Under General Costs}\label{sec:asymp}

The main goal of this section is to prove a general result (Theorem~\ref{Theo:main2}) on the stability of optimal maps and potentials for a very large class of costs, using the tools presented in section \ref{sec:c-con}. 
The path to this main result starts by proving stability along subsequences of the $c$-superdifferentials of optimal transport potentials
(Lemma \ref{Teo:main1}), extending a similar result  in \cite{BaLo} for the particular setup of classical convexity. We prove then (Lemmas \ref{Lem:Boundness} and \ref{Lem:technic3}) a uniform boundedness result which, once the potentials are conveniently fixed at a convenient point (see \eqref{potential.fixing} below) allows to prove the anticipated stability result. For the sake of readability we present here the results and defer most of the proofs to the Appendix.

The first step in the plan above is this result on the stability of $c$-superdiffer\-en\-tials.
\begin{Lemma}\label{Teo:main1}
Let $Q\in \mathcal{P}(\mathbb{R}^d)$ be such that $Q\ll \ell_d$ and has connected support and negligible boundary. Let $Q_n, P_n,P\in \mathcal{P}(\mathbb{R}^d)$ be such that $P_n\stackrel{w}\rightarrow P$, $Q_n\stackrel{w}\rightarrow Q$ and
$$\mathcal{T}_c(P_n,Q_n)<\infty \ \ \text{and} \ \ \mathcal{T}_c(Q,P)<\infty , \ \ \text{for all $n \in \N$},$$ 
for a cost $c(\mathbf{x},\mathbf{y})=h(\mathbf{x}-\mathbf{y})$ with $h$ differentiable and satisfying {\em (A1)-(A3)}. If $\psi_n$ (resp. $\psi$) are the optimal transport $c$-potentials from $Q_n$ to $P_n$ (resp. from $Q$ to $P$), then there exists a cyclically monotone set $\Gamma$ such that 
\begin{align}\label{convergence_subs}
\partial^c \psi_n \rightarrow \Gamma\subset \partial^c\psi 
\end{align}
in the sense of Painlev\'e-Kuratowski along subsequences. Moreover, if $\mathbf{x}\in \text{\em dom}(\nabla \psi)$, then $(\mathbf{x},\nabla^c\psi(\mathbf{x}))\in\Gamma$.
\end{Lemma}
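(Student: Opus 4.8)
The plan is to pass to a subsequence along which the optimal transport plans and the $c$-superdifferentials of the potentials converge, and then to identify the limit using the characterization of optimal plans together with the uniqueness of the potential. \textbf{Compactness.} For each $n$, by Theorem~\ref{Theo:GaMc}(i) the potential $\psi_n$ exists and any optimal transport plan $\gamma_n\in\Pi(Q_n,P_n)$ for the cost $c$ is concentrated on the $c$-cyclically monotone set $\partial^c\psi_n$ (this is the characterization of optimal plans/potentials recalled in Section~\ref{sec:c-con}). Since $Q_n\stackrel{w}{\longrightarrow}Q$ and $P_n\stackrel{w}{\longrightarrow}P$, the families $\{Q_n\}$ and $\{P_n\}$ are tight, hence so is $\{\gamma_n\}$; along a subsequence $\gamma_n\stackrel{w}{\longrightarrow}\gamma$, and $\gamma\in\Pi(Q,P)$ because the coordinate projections are continuous. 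Tightness of $\{\gamma_n\}$ also yields an $R>0$ with $\gamma_n\big(B(\mathbf{0},R)\times B(\mathbf{0},R)\big)\ge\tfrac12$ for all $n$, which forces $\partial^c\psi_n\cap B(\mathbf{0},R\sqrt{2})\neq\emptyset$ in $\R^{2d}$; thus $\{\partial^c\psi_n\}$ does not escape to the horizon, and Theorem~\ref{Teo:Horizon} provides a further subsequence along which $\partial^c\psi_n\to\Gamma$ in the Painlev\'e--Kuratowski sense, with $\Gamma\neq\emptyset$. By Lemma~\ref{lemma:convergence_c} and continuity of $c$, $\Gamma$ is $c$-cyclically monotone.

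\textbf{Identification.} If $(\mathbf{x},\mathbf{y})\in\text{Supp}(\gamma)$, then every open neighbourhood $U$ of it has $\gamma(U)>0$, hence $\gamma_n(U)>0$ for large $n$, hence $U\cap\partial^c\psi_n\neq\emptyset$ for large $n$ (as $\gamma_n$ lives on $\partial^c\psi_n$); so $(\mathbf{x},\mathbf{y})\in\limsup_n\partial^c\psi_n=\Gamma$, whence $\text{Supp}(\gamma)\subset\Gamma$. Lemma~\ref{lem:Ruschendorf} now supplies a $c$-concave $g$ with $\Gamma\subset\partial^c g$, so $\gamma$ is concentrated on $\partial^c g$ and, by Theorem~\ref{Theo:GaMc}(i), $\gamma$ is an optimal transport plan from $Q$ to $P$ and $g$ an optimal transport potential for that problem. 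Since $Q\ll\ell_d$ has open connected support, Corollary~\ref{cor:uniqueness} forces $g=\psi+C$ on $\text{supp}(Q)$ for a constant $C$; as adding a constant leaves $\partial^c(\cdot)$ unchanged and, at a point of differentiability, $\partial^c f(\mathbf{x})=\{\mathbf{x}-\nabla h^*(\nabla f(\mathbf{x}))\}$ is determined by the local value of $\nabla f$ (Proposition~3.4 in~\cite{GaMc}), this identifies $g$ with $\psi$ where it counts and yields $\Gamma\subset\partial^c\psi$; in particular $\gamma=(id\times\nabla^c\psi)\#Q$ is the unique optimal plan (Theorem~\ref{Theo:GaMc}(ii)), and $\gamma$ is concentrated on $\partial^c\psi$.

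\textbf{The graph of $\nabla^c\psi$.} Fix $\mathbf{x}\in\text{dom}(\nabla\psi)\cap\text{supp}(Q)$. Then $\partial^c\psi(\mathbf{x})=\{\nabla^c\psi(\mathbf{x})\}$ is a singleton, and Lemma~\ref{lemma:cont_c} gives, for each $\epsilon>0$, a $\delta\in(0,\epsilon]$ with $\partial^c\psi\big(B(\mathbf{x},\delta)\big)\subset B(\nabla^c\psi(\mathbf{x}),\epsilon)$. Since $\gamma$ is concentrated on $\partial^c\psi$ with first marginal $Q$, we get $\gamma\big(B(\mathbf{x},\delta)\times B(\nabla^c\psi(\mathbf{x}),\epsilon)\big)=Q\big(B(\mathbf{x},\delta)\big)>0$; so every such box meets $\text{Supp}(\gamma)\subset\Gamma$, and letting $\epsilon\downarrow 0$ with $\Gamma$ closed gives $(\mathbf{x},\nabla^c\psi(\mathbf{x}))\in\Gamma$.

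\textbf{Main obstacle.} The delicate point is the identification step: Lemma~\ref{lem:Ruschendorf} is not a maximality statement, so it produces only \emph{some} $c$-concave $g$ with $\Gamma\subset\partial^c g$, and passing to the prescribed potential $\psi$ genuinely requires the new uniqueness result (Corollary~\ref{cor:uniqueness}), available precisely because $Q\ll\ell_d$ has connected support and $h$ is differentiable, together with a careful treatment of what happens on the (Lebesgue--negligible) boundary of $\text{supp}(Q)$ — where $g$ and $\psi$ need not coincide, the $c$-superdifferentials may fail to be singletons, and one must still justify that the conclusion of the last step holds for every $\mathbf{x}\in\text{dom}(\nabla\psi)$. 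A secondary technicality is keeping the various second coordinates bounded (for the ``no escape to the horizon'' step and for the control near $\mathbf{x}$); this is handled by normalising the additive constants of $\psi_n$ — harmless, since it does not affect $\partial^c\psi_n$ — so that the $\psi_n$ are uniformly bounded on a fixed compact neighbourhood, and then invoking the local boundedness of $c$-superdifferentials (Proposition~C.4 in~\cite{GaMc}) and the superlinearity (A3) of $h$.
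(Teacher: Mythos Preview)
Your proof is correct and follows essentially the same route as the paper's: extract a Painlev\'e--Kuratowski limit $\Gamma$ of $\partial^c\psi_n$ (after showing no escape to the horizon), embed $\Gamma$ in $\partial^c g$ for some $c$-concave $g$ via Lemma~\ref{lem:Ruschendorf}, and then use the uniqueness of the optimal potential on $\text{supp}(Q)$ to identify $g$ with $\psi$ up to a constant. The ``moreover'' part is handled by the same box argument based on Lemma~\ref{lemma:cont_c} and positivity of $Q$ on balls in $\text{supp}(Q)$.

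Two small differences are worth noting. First, your no-escape argument via tightness of the optimal plans $\gamma_n$ is slightly slicker than the paper's, which instead runs the box argument at a fixed $\mathbf{x}_0\in\text{dom}(\nabla\psi)\cap\text{supp}(Q)$ to produce $(\mathbf{x}_{n_k},\mathbf{y}_{n_k})\in\partial^c\psi_{n_k}$ converging to $(\mathbf{x}_0,\nabla^c\psi(\mathbf{x}_0))$; this has the side benefit of giving the ``moreover'' conclusion before the identification step. Second, the paper reverses your order: it first shows $(\mathbf{x},\nabla^c\psi(\mathbf{x}))\in\Gamma\subset\partial^c f$ for \emph{every} $\mathbf{x}\in\text{dom}(\nabla^c\psi)$ via the box argument, deduces $\nabla f=\nabla\psi$ a.e.\ on $\text{supp}(Q)$, and only then invokes Theorem~\ref{theo:iqual} (rather than Corollary~\ref{cor:uniqueness}) to get $f=\psi+C$. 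Your route through $\text{Supp}(\gamma)\subset\Gamma$ and Corollary~\ref{cor:uniqueness} is equivalent. The issue you flag in your ``Main obstacle'' paragraph---that $g=\psi+C$ is only established on $\text{supp}(Q)$, so the global inclusion $\Gamma\subset\partial^c\psi$ is not literally justified at points outside $\text{supp}(Q)$ or where differentiability fails---is present in the paper's proof as well (it concludes ``hence $\partial^c\psi=\partial^c f$'' from equality on $\text{supp}(Q)$); in both cases the conclusion is used downstream only for $\mathbf{x}\in\text{supp}(Q)$, so this is a harmless imprecision.
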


{In our next results we pay attention to the optimal transportation potentials, $\psi_n$, which are well defined, under the assumptions of
Corollary \ref{cor:uniqueness}, up to the addition of a constant. The possibility of arbitrarily choosing that constant could lead to some difficulties that we can avoid fixing it as follows. We choose some $\mathbf{p}_0\in \text{dom}(\nabla \psi)\cap \text{supp} (Q)$ and assume 
\begin{equation}\label{potential.fixing}
\psi(\mathbf{p}_0)=0\quad \text{ and } \quad \psi_n(\mathbf{p}_0)=0 \text{ for large } n.
\end{equation}
Of course, we can ensure that the potential $\psi$ vanishes at any $\mathbf{p}_0$ where it is finite by taking $\tilde{\psi}(\mathbf{x})={\psi}(\mathbf{x})- {\psi}(\mathbf{p}_0)$. Under the assumptions of Lemma \ref{Teo:main1} (see the proof for further details) we must
have $\mathbf{p}_0\in \text{dom}(\nabla \psi_n)$ for large enough $n$, hence, $\mathbf{p}_0\in \text{dom}(\psi_n)$ and we can choose the potentials as in \eqref{potential.fixing}.
}

{Next, we present two technical lemmas in which the assumptions (A2) and (A3) play the main roles. These results, crucial in the proof of Theorem~\ref{Theo:main2}, are proved ellaborating on the arguments in \cite{GaMc} to prove that a $c$-concave function is locally Lipschitz. 
The geometric interpretation of these results is shown in Figure~\ref{Fig_geometric}. Lemma~\ref{Lem:Boundness} shows that for any point $\mathbf{p}$ for which the boundedness condition fails, there is a hyperplane $H$ passing trough $\mathbf{p}$ and splitting the space into two parts such that in one of both, the grey one in Figure~\ref{Fig_geometric}, this property holds for any other point. 
}
\begin{Lemma}\label{Lem:Boundness}
Under the same assumptions as in Lemma~\ref{Teo:main1}, let $\mathbf{p}\in \R^d$ be such that there exists a sequence $\{ \mathbf{p}_n\}_{n\in \N}\subset \R^d$ such that $\mathbf{p}_n\rightarrow \mathbf{p}$ and $\psi_n(\mathbf{p}_n)$ is not bounded. Then there exists $\mathbf{z}\in \R^d$ such that, for every bounded sequence $\{ \mathbf{x}_n\}_{n\in \N}\subset\subset \{\mathbf{x}:\left\langle \mathbf{z},\mathbf{x}-\mathbf{p}\right\rangle>0 \}$, the sequence $\psi_n(\mathbf{x}_n)$ is not bounded.
\end{Lemma}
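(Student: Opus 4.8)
The plan is to combine the $c$-concavity representation of the potentials with the cone condition (A2). Since $\psi_n$ is $c$-concave I will use $\psi_n=(\psi_n^c)^c$, i.e.
\begin{align*}
\psi_n(\mathbf{x})=\inf_{\mathbf{y}\in\R^d}\bigl\{h(\mathbf{x}-\mathbf{y})-\psi_n^c(\mathbf{y})\bigr\},\qquad
\psi_n^c(\mathbf{y})=\inf_{\mathbf{x}\in\R^d}\bigl\{h(\mathbf{x}-\mathbf{y})-\psi_n(\mathbf{x})\bigr\},
\end{align*}
together with the consequence of the normalization \eqref{potential.fixing} that $\psi_n^c(\mathbf{y})\le h(\mathbf{p}_0-\mathbf{y})$ for all $\mathbf{y}$ and all large $n$. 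First I would pass to a subsequence (not relabelled) along which $\psi_n(\mathbf{p}_n)\to+\infty$ or $\psi_n(\mathbf{p}_n)\to-\infty$, and treat the case $\psi_n(\mathbf{p}_n)\to-\infty$ in detail, the case $+\infty$ being symmetric after interchanging $\psi_n$ with $\psi_n^c$ (which is $c$-concave for the reflected cost associated with $\tilde h(\mathbf{v}):=h(-\mathbf{v})$, still satisfying (A1)-(A3), and $\psi_n=(\psi_n^c)^c$).

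Next I would locate the direction of the blow-up. For each $n$ pick $\mathbf{y}_n$ with $h(\mathbf{p}_n-\mathbf{y}_n)-\psi_n^c(\mathbf{y}_n)\le\psi_n(\mathbf{p}_n)+1$ (for instance $\mathbf{y}_n\in\partial^c\psi_n(\mathbf{p}_n)$ when this set is nonempty). Combined with $\psi_n^c(\mathbf{y}_n)\le h(\mathbf{p}_0-\mathbf{y}_n)$ this gives $\psi_n(\mathbf{p}_n)\ge h(\mathbf{p}_n-\mathbf{y}_n)-h(\mathbf{p}_0-\mathbf{y}_n)-1$, so if $\{\mathbf{y}_n\}$ were bounded the right-hand side would stay bounded, contradicting $\psi_n(\mathbf{p}_n)\to-\infty$; hence $|\mathbf{y}_n|\to\infty$, and after a further extraction $\mathbf{y}_n/|\mathbf{y}_n|\to\mathbf{u}$ for a unit vector $\mathbf{u}$.

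Then I would run the cone argument. Since $|\mathbf{p}_n-\mathbf{y}_n|\to\infty$, condition (A2) applies with vertex $\mathbf{p}_n-\mathbf{y}_n$: for each pair $(r,\theta)$ it yields a unit direction on which the truncated cone $K(r,\theta,\cdot,\mathbf{p}_n-\mathbf{y}_n)$ carries the maximum of $h$ at the vertex. As these cones are nested in $(r,\theta)$ and the unit sphere is compact, for $n$ large one can select one direction $\mathbf{z}_n$ valid for \emph{all} $(r,\theta)$ with $M(r,\theta)<|\mathbf{p}_n-\mathbf{y}_n|$, and extract $\mathbf{z}_n\to\mathbf{z}$, $|\mathbf{z}|=1$. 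With $H^{+}:=\{\mathbf{x}:\langle\mathbf{z},\mathbf{x}-\mathbf{p}\rangle>0\}$, given any compact $K\subset H^{+}$ I pick $\eta,R>0$ with $\langle\mathbf{z},\mathbf{x}-\mathbf{p}\rangle\ge2\eta$ and $|\mathbf{x}-\mathbf{p}|\le R$ on $K$, then take $r>R+1$ and $\theta$ close enough to $\pi$ that $(R+1)\cos(\theta/2)<\eta$; using $\mathbf{p}_n\to\mathbf{p}$ and $\mathbf{z}_n\to\mathbf{z}$ one checks that $\mathbf{x}-\mathbf{y}_n\in K(r,\theta,\mathbf{z}_n,\mathbf{p}_n-\mathbf{y}_n)$ for all $\mathbf{x}\in K$ and $n$ large, so that $h(\mathbf{x}-\mathbf{y}_n)\le h(\mathbf{p}_n-\mathbf{y}_n)$ and
\begin{align*}
\psi_n(\mathbf{x})\le h(\mathbf{x}-\mathbf{y}_n)-\psi_n^c(\mathbf{y}_n)\le h(\mathbf{p}_n-\mathbf{y}_n)-\psi_n^c(\mathbf{y}_n)\le\psi_n(\mathbf{p}_n)+1\longrightarrow-\infty .
\end{align*}
Since any $\{\mathbf{x}_n\}\subset\subset H^{+}$ lies in such a compact $K$, the sequence $\psi_n(\mathbf{x}_n)$ is not bounded, which is the claim with this $\mathbf{z}$.

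The hard part will be the geometric extraction in the third paragraph: (A2) only produces, for each vertex and each $(r,\theta)$, \emph{some} valid direction, so the work is to organise these into a single limiting $\mathbf{z}$ — independent of the target compact $K$ — for which sending $\theta\uparrow\pi$ and $r\uparrow\infty$ exhausts $H^{+}$ by compacta on which $h(\cdot-\mathbf{y}_n)$ is dominated by its value at $\mathbf{p}_n-\mathbf{y}_n$; this is precisely the picture in Figure~\ref{Fig_geometric}. A secondary difficulty is the reduction to a definite sign of $\psi_n(\mathbf{p}_n)$ and the verification that the $+\infty$ case is genuinely symmetric, which rests on the invariance of (A1)-(A3) under $\mathbf{v}\mapsto-\mathbf{v}$ together with the identity $\psi_n=(\psi_n^c)^c$.
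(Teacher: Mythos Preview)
Your argument for the case $\psi_n(\mathbf{p}_n)\to-\infty$ is essentially the paper's: pick a (near-)minimizer $\mathbf{y}_n$, use the normalization at $\mathbf{p}_0$ to force $|\mathbf{y}_n|\to\infty$, invoke (A2) at the vertices $\mathbf{p}_n-\mathbf{y}_n$, extract a limiting cone direction $\mathbf{z}$, and conclude $h(\mathbf{x}-\mathbf{y}_n)\le h(\mathbf{p}_n-\mathbf{y}_n)$ on the half-space, whence $\psi_n(\mathbf{x}_n)\le\psi_n(\mathbf{p}_n)+1\to-\infty$. One technical refinement: your ``nested cones'' claim is not literally correct (cones with different $(r,\theta)$ need not be comparable), and (A2) only guarantees \emph{some} direction for each $(r,\theta)$ separately. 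The paper sidesteps this by tying the two parameters together, taking $\theta_n=\pi/(1+r_n^{-1})$ and letting $r_n$ be the supremal height admissible at $\mathbf{p}_n-\mathbf{y}_n$; (A2) then forces $r_n\to\infty$ (hence $\theta_n\to\pi$) directly, and one extracts $\mathbf{z}_n\to\mathbf{z}$ from this single one-parameter family. Your extraction of $\mathbf{y}_n/|\mathbf{y}_n|\to\mathbf{u}$ is harmless but unused; the relevant direction is $\mathbf{z}$, coming from (A2), not $\mathbf{u}$.

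The genuine gap is your treatment of $\psi_n(\mathbf{p}_n)\to+\infty$. Interchanging $\psi_n$ with $\psi_n^c$ and rerunning the $-\infty$ argument would produce a half-space, around some point in the \emph{dual} variable, on which $\psi_n^c$ is unbounded below; this says nothing about $\psi_n$ on a half-space through $\mathbf{p}$, which is what the lemma asserts. The identity $\psi_n=(\psi_n^c)^c$ does not translate the conclusion back. The paper does not use symmetry here at all: it \emph{excludes} the $+\infty$ case. If $\psi_n(\mathbf{p}_n)\to+\infty$ then $\psi_n^c(\mathbf{y}_n)\to-\infty$ along any convergent $\mathbf{y}_n$; but Lemma~\ref{Teo:main1} furnishes $(\tilde{\mathbf{p}}_n,\mathbf{y}_n)\in\partial^c\psi_n$ with $\tilde{\mathbf{p}}_n\to\mathbf{p}_0$ and $\mathbf{y}_n\to\nabla^c\psi(\mathbf{p}_0)$, so $\psi_n(\tilde{\mathbf{p}}_n)=c(\tilde{\mathbf{p}}_n,\mathbf{y}_n)-\psi_n^c(\mathbf{y}_n)\to+\infty$, which is incompatible with the normalization $\psi_n(\mathbf{p}_0)=0$ together with the Lipschitz bound \eqref{eq:cota}--\eqref{eq:cota2} (the $\mathbf{y}_n$ are bounded). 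So under \eqref{potential.fixing} only the $-\infty$ branch survives; your symmetry shortcut is neither needed nor valid as stated.
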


\begin{figure}
\includegraphics[width=8cm]{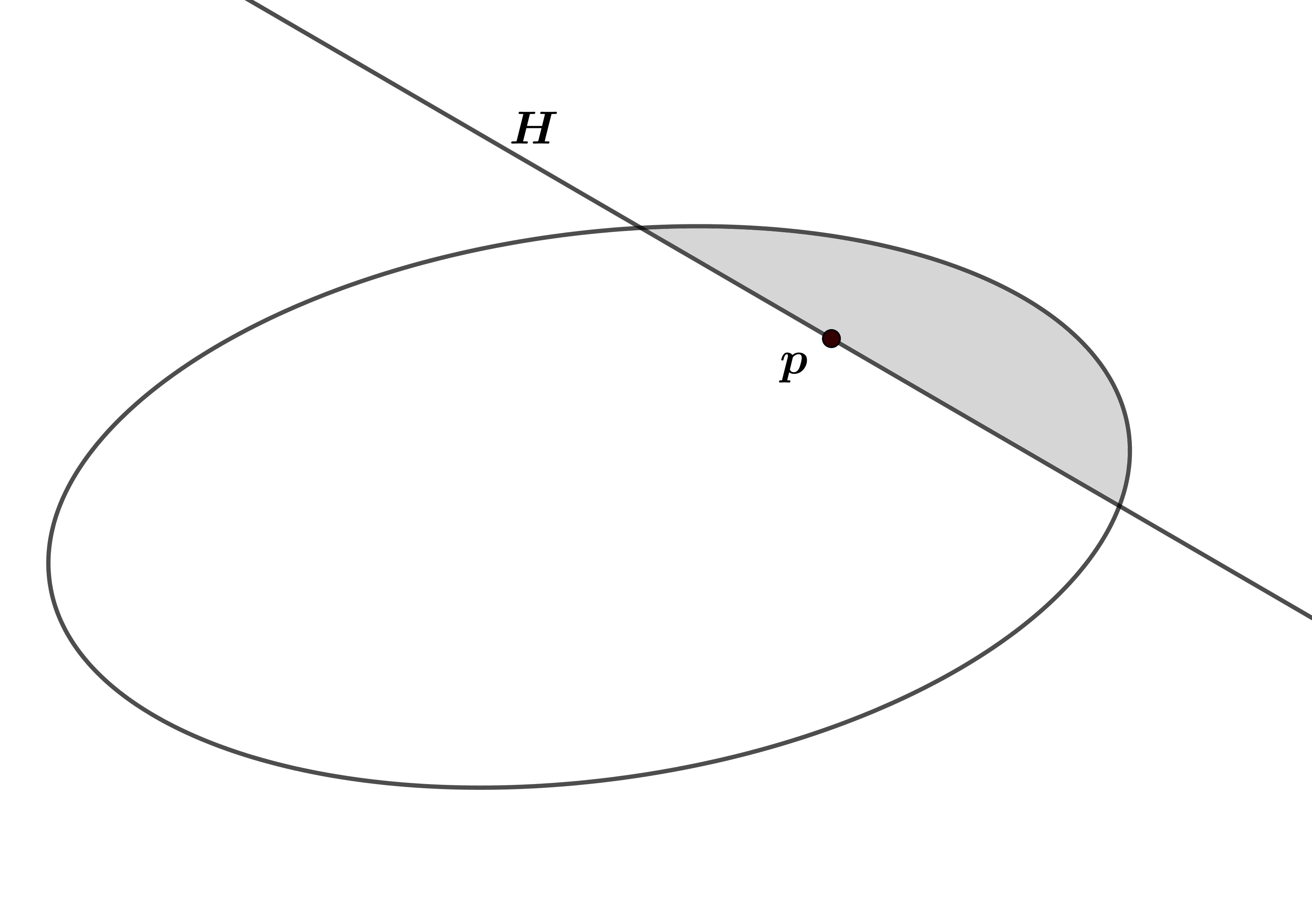}
\centering 
\caption{Geometric interpretation of Lemma~\ref{Lem:Boundness} and Lemma~\ref{Lem:technic3}.}
\label{Fig_geometric}
\end{figure}
Lemma \ref{Lem:Boundness} is the key to the next technical result, which proves boundedness of both $\bigcup_{k\in \N}\psi_{n_k}(K)$ and $\bigcup_{k\in \N}\partial^c\psi_{n_k}(K)$ for compact $K \subset \text{Supp}(Q)$.
\begin{Lemma}\label{Lem:technic3}
Let $P,Q,P_n, Q_n$ be probability measures satisfying the assumptions of Lemma~\ref{Teo:main1}. Assume that $\mathbf{p}_0\in\text{\em Supp} (Q)$ and $\psi_n(\mathbf{p}_0)\to 0$. Then for each compact $K \subset \text{\em Supp}(Q)$ there exists a subsequence $\{ \psi_{n_k}\}_{k\in \N}$ such that $\bigcup_{k\in \N}\psi_{n_k}(K)$ and $\bigcup_{k\in \N}\partial^c\psi_{n_k}(K)$ are bounded sets. 
\end{Lemma}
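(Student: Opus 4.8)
The plan is to combine the compactness of $K$ with the dichotomy provided by Lemma~\ref{Lem:Boundness}, and to use the reference point $\mathbf{p}_0$ to rule out the ``bad'' half-space. First I would argue by contradiction: suppose that no subsequence works, i.e.\ for \emph{every} subsequence $\{n_k\}$ either $\bigcup_k \psi_{n_k}(K)$ or $\bigcup_k \partial^c\psi_{n_k}(K)$ is unbounded. Actually it is cleaner to separate the two claims. For the first claim, boundedness of $\bigcup_k \psi_{n_k}(K)$, suppose it fails along every subsequence; then one can extract a single subsequence (still denoted $n$) and points $\mathbf{x}_n\in K$ with $|\psi_n(\mathbf{x}_n)|\to\infty$. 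Since $K$ is compact, passing to a further subsequence we may assume $\mathbf{x}_n\to\mathbf{p}\in K\subset\text{supp}(Q)$. Thus the hypothesis of Lemma~\ref{Lem:Boundness} is met at this point $\mathbf{p}$, so there is a direction $\mathbf{z}\in\R^d$ such that $\psi_n$ is unbounded along \emph{any} bounded sequence staying in the open half-space $H^+:=\{\mathbf{x}:\langle\mathbf{z},\mathbf{x}-\mathbf{p}\rangle>0\}$.

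The crucial step is to derive a contradiction using $\mathbf{p}_0$. Here the connectedness of $\text{supp}(Q)$ and the fact that $\mathbf{p}\in\text{supp}(Q)$ should let me find, inside $\text{supp}(Q)$, a point $\mathbf{q}$ lying in $H^+$ (an open set meeting $\text{supp}(Q)$ since $\mathbf{p}$ is in the open support and $\mathbf{p}$ is on the boundary hyperplane $H$, so $H^+$ intersects any neighbourhood of $\mathbf{p}$). Then the constant sequence $\mathbf{x}_n\equiv\mathbf{q}$ is bounded and lies in $H^+$, so by Lemma~\ref{Lem:Boundness} $\psi_n(\mathbf{q})$ is unbounded. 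But $\{\mathbf{q}\}$ is a compact subset of $\text{supp}(Q)$, so applying Lemma~\ref{Lem:Boundness} \emph{again} at the point $\mathbf{q}$ (with the sequence $\mathbf{p}_n\equiv\mathbf{q}$) we get a direction $\mathbf{z}'$ such that $\psi_n$ is unbounded on every bounded sequence in $\{\langle\mathbf{z}',\mathbf{x}-\mathbf{q}\rangle>0\}$; iterating this around, and using that $\mathbf{p}_0\in\text{supp}(Q)$ with $\psi_n(\mathbf{p}_0)\to 0$ bounded, one reaches a contradiction. The cleanest way to organise this iteration is: let $B:=\{\mathbf{x}\in\text{supp}(Q):\exists\,\mathbf{x}_n\to\mathbf{x}\text{ with }\psi_n(\mathbf{x}_n)\text{ unbounded}\}$; Lemma~\ref{Lem:Boundness} shows that every point of $B$ has a neighbourhood intersected with $\text{supp}(Q)$ still in $B$ (because a whole half-space worth of directions is forced into $B$, and $\text{supp}(Q)$ being open around interior points meets that half-space), so $B$ is open in $\text{supp}(Q)$; a short argument using upper semicontinuity / local Lipschitz bounds (Lemma~\ref{lem:Mccan}, applied to each $\psi_n$ on a fixed small ball) shows $B$ is also closed in $\text{supp}(Q)$; by connectedness $B=\text{supp}(Q)$ or $B=\emptyset$, and $\mathbf{p}_0\notin B$ since $\psi_n(\mathbf{p}_0)\to 0$, so $B=\emptyset$ — contradicting $\mathbf{p}\in B$. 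This proves $\bigcup_k \psi_{n_k}(K)$ is bounded for a suitable subsequence.

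For the second claim, once $\bigcup_k\psi_{n_k}(K)$ is bounded, enlarge $K$ slightly to a compact $K'$ with $K\subset\subset K'\subset\text{supp}(Q)$, so that $\bigcup_k\psi_{n_k}(K')$ is still bounded (the argument above gives boundedness on any compact subset of $\text{supp}(Q)$ along a common further subsequence, by a diagonal extraction over an exhaustion of $\text{supp}(Q)$). The bound \eqref{eq:cota}–\eqref{eq:cota2} then shows that each $\psi_{n_k}$ is Lipschitz on $K$ with a constant controlled by $\sup_{K'}|\psi_{n_k}|$ and by $\sup$ of $|\nabla h|$ over the relevant compact set of arguments; combined with the argument in \cite{GaMc} (quoted around Lemma~\ref{lem:Mccan}), the uniform bound on $\psi_{n_k}$ over $K'$ forces $\partial^c\psi_{n_k}(K)$ to lie in a fixed bounded set — concretely, if $\mathbf{y}_k\in\partial^c\psi_{n_k}(\mathbf{x}_k)$ with $\mathbf{x}_k\in K$ and $|\mathbf{y}_k|\to\infty$, then evaluating the $c$-superdifferential inequality $\psi_{n_k}(\mathbf{z})\le\psi_{n_k}(\mathbf{x}_k)+h(\mathbf{z}-\mathbf{y}_k)-h(\mathbf{x}_k-\mathbf{y}_k)$ at a well-chosen $\mathbf{z}$ in the cone supplied by (A2) (whose vertex can be taken near $\mathbf{x}_k-\mathbf{y}_k$ and on which $h$ attains its max at the vertex) together with the superlinearity (A3) contradicts the uniform bound on $|\psi_{n_k}|$ over $K'$. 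Hence $\bigcup_k\partial^c\psi_{n_k}(K)$ is bounded, completing the proof.

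The main obstacle I anticipate is the use of \textbf{(A2)} and \textbf{(A3)} in the last step to convert a uniform sup-bound on the potentials into a uniform bound on the $c$-superdifferentials over a compact: one must choose the cone and the test point $\mathbf{z}$ carefully so that $h(\mathbf{z}-\mathbf{y}_k)-h(\mathbf{x}_k-\mathbf{y}_k)$ is pushed to $-\infty$ (or $+\infty$) at a rate incompatible with the bound. This is exactly the geometric content of Figure~\ref{Fig_geometric} and of Lemma~\ref{Lem:Boundness}, so in practice I expect the cleanest route is to phrase the whole second claim as another application of (a variant of) Lemma~\ref{Lem:Boundness} rather than redoing the cone estimate from scratch.
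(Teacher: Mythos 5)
Your overall picture is right for the second claim (bounding $\partial^c\psi_{n_k}(K)$ via a uniform bound of $\psi_{n_k}$ on a slightly larger compact, the $c$-superdifferential inequality, and superlinearity; the paper carries this out with convexity of $h$ plus (A3) rather than (A2), a minor point). But the argument you propose for the first claim --- the open-and-closed set $B$ and connectedness --- has a real gap, and it is precisely the gap that the paper's ``Step 1'' is designed to fill.

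There are three problems with the $B$-argument. First, openness of $B$ does not follow from Lemma~\ref{Lem:Boundness}: if $\mathbf{p}\in B$ that lemma gives a direction $\mathbf{z}$ and forces the open half-space $\{\langle\mathbf{z},\cdot-\mathbf{p}\rangle>0\}$ into $B$, but $\mathbf{p}$ itself lies on the \emph{boundary} hyperplane of that half-space, so no neighbourhood of $\mathbf{p}$ is obtained, and iterating the lemma at points of the half-space just produces further half-spaces never containing their vertex. Second, and more fundamentally, $\mathbf{p}_0\notin B$ does not follow from $\psi_n(\mathbf{p}_0)\to0$: membership in $B$ requires only the existence of \emph{some} sequence $\mathbf{x}_n\to\mathbf{p}_0$ with $\psi_n(\mathbf{x}_n)$ unbounded, and boundedness of the constant sequence at $\mathbf{p}_0$ says nothing about neighbouring sequences unless one already controls the local Lipschitz constants of $\psi_n$ \emph{uniformly in $n$}. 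This is the same reason the ``closedness via Lemma~\ref{lem:Mccan}'' step is unjustified: each $\psi_n$ is locally Lipschitz, but the constant near a point is governed by $\partial^c\psi_n$ there, which is exactly the quantity you have not yet bounded.

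What the paper does instead is to prove a ``pointwise boundedness'' step first: fix $\mathbf{x}\in\text{supp}(Q)\cap\text{dom}(\nabla\psi)$, and use the Painlev\'e--Kuratowski convergence $\partial^c\psi_{n_k}\to\Gamma\subset\partial^c\psi$ of Lemma~\ref{Teo:main1} to extract $(\mathbf{x}_{n_k},\mathbf{y}_{n_k})\in\partial^c\psi_{n_k}$ with $(\mathbf{x}_{n_k},\mathbf{y}_{n_k})\to(\mathbf{x},\mathbf{y})$. Because the $\mathbf{y}_{n_k}$ are bounded, the $c$-superdifferential inequality
\begin{align*}
\psi_{n_k}(\mathbf{p}_0)\le\psi_{n_k}(\mathbf{x}_{n_k})+c(\mathbf{p}_0,\mathbf{y}_{n_k})-c(\mathbf{x}_{n_k},\mathbf{y}_{n_k})
\end{align*}
together with $\psi_{n_k}(\mathbf{p}_0)\to0$ forces $\psi_{n_k}(\mathbf{x}_{n_k})$ to be bounded below (and a similar argument rules out $\to+\infty$). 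This produces, at \emph{every} point of $\text{supp}(Q)$, one explicit bounded sequence along which the potentials stay bounded. That is exactly what contradicts Lemma~\ref{Lem:Boundness} once you pick a point $\mathbf{x}_0\in\text{supp}(Q)$ inside the half-space: the lemma says every bounded sequence converging into that half-space gives unbounded values, while Step~1 exhibits one that does not. Your plan should replace the $B$-set/connectedness argument with this Step~1; connectedness of $\text{supp}(Q)$ is not used in Step~2, only the fact that $\text{supp}(Q)$ is open (so it meets the half-space) and the pointwise boundedness supplied by Lemma~\ref{Teo:main1}.
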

Now, as an application of the uniform boundedness results in Lemma \ref{Lem:technic3}, we are ready to apply the classical  Arzel\`a-Ascoli theorem to prove of the main theorem of the section.
\begin{Theorem}\label{Theo:main2}
Let $Q\in \mathcal{P}(\R^d)$ be such that $Q\ll \ell_d$ and has a connected support with negligible boundary. Assume $Q_n, P_n,P\in \mathcal{P}(\R^d)$ are such that $P_n\stackrel{w}\rightarrow P$, $Q_n\stackrel{w}\rightarrow Q$ and
$$\mathcal{T}_c(P_n,Q_n)<\infty \ \ \text{and} \ \ \mathcal{T}_c(Q,P)<\infty $$ 
for a cost $c(\mathbf{x},\mathbf{y})=h(\mathbf{x}-\mathbf{y})$, with $h$ differentiable and satisfying {\em (A1)-(A3)}.  If $\psi_n$ (resp. $\psi$) are optimal transport potentials from $Q_n$ to $P_n$ (resp. from $Q$ to $P$) for the cost $c$. Then:
\begin{enumerate}
\item[(i)] There exist constants $a_n\in \R$ such that $\tilde{\psi_n}:=\psi_n-a_n \rightarrow \psi$ in the sense of uniform convergence on the compacts sets of $\text{\em Supp}(Q)$.
\item[(ii)] For each compact $K\subset\subset \text{\em Supp}(Q)\cap \text{\em dom}(\nabla \psi)$
\begin{align}\label{eq:convergence_nabla}
\sup_{\mathbf{x}\in K}\sup_{\mathbf{y}_n\in \partial^c \psi_n(\mathbf{x})}|\mathbf{y}_n-\nabla^c \psi(\mathbf{x}) |\longrightarrow 0.
\end{align}
\end{enumerate}

\end{Theorem}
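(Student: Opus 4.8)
The plan is to combine the stability of $c$-superdifferentials along subsequences (Lemma \ref{Teo:main1}) with the uniform boundedness provided by Lemma \ref{Lem:technic3}, and then apply Arzel\`a--Ascoli. First I would fix the potentials as in \eqref{potential.fixing}, choosing $\mathbf{p}_0\in \text{dom}(\nabla \psi)\cap \text{supp}(Q)$ and setting $a_n:=\psi_n(\mathbf{p}_0)$, so that $\tilde\psi_n:=\psi_n-a_n$ satisfies $\tilde\psi_n(\mathbf{p}_0)=0$ and $\psi(\mathbf{p}_0)=0$; this is legitimate because, as noted after Lemma \ref{Teo:main1}, $\mathbf{p}_0\in \text{dom}(\psi_n)$ for large $n$. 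For any compact $K\subset\subset\text{supp}(Q)$ I would enlarge it slightly to a compact $K'$ with $K\subset\subset K'\subset\subset\text{supp}(Q)$, and apply Lemma \ref{Lem:technic3} to $K'$: along a subsequence, $\bigcup_k \tilde\psi_{n_k}(K')$ and $\bigcup_k \partial^c\tilde\psi_{n_k}(K')$ are bounded. Boundedness of the superdifferentials on $K'$, together with the Lipschitz estimate \eqref{eq:cota}--\eqref{eq:cota2} (which bounds $|\tilde\psi_{n_k}(\mathbf{x})-\tilde\psi_{n_k}(\mathbf{a})|$ by $|\mathbf{x}-\mathbf{a}|$ times a quantity controlled by $\sup|\nabla h|$ over the relevant bounded set), gives a uniform Lipschitz constant for $\{\tilde\psi_{n_k}\}$ on $K$. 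Hence the family is equicontinuous and uniformly bounded on $K$, so by Arzel\`a--Ascoli some further subsequence converges uniformly on $K$ to a continuous limit $g$.

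Next I would identify the limit $g$ with $\psi$ on $\text{supp}(Q)$. The point is that uniform-on-compacts limits of $\tilde\psi_{n_k}$ inherit the superdifferential structure: if $\tilde\psi_{n_k}\to g$ uniformly on compacts and $(\mathbf{x},\mathbf{y})\in\partial^c g$-type inequalities are passed to the limit, one checks $g$ is $c$-concave with $\partial^c g\supset \Gamma$, where $\Gamma$ is the Painlev\'e--Kuratowski limit of $\partial^c\psi_{n_k}$ furnished by Lemma \ref{Teo:main1}. By that same lemma $\Gamma\subset\partial^c\psi$ and $(\mathbf{x},\nabla^c\psi(\mathbf{x}))\in\Gamma$ for every $\mathbf{x}\in\text{dom}(\nabla\psi)$; in particular $\nabla g(\mathbf{x})=\nabla\psi(\mathbf{x})$ at every point where both are differentiable, which by item c) after Lemma \ref{lem:Mccan} is a.e.\ on $\text{supp}(Q)$. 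Since $\text{supp}(Q)$ is open and connected, Theorem \ref{theo:iqual} gives $g=\psi+C$ there, and evaluating at $\mathbf{p}_0$ forces $C=0$, i.e.\ $g=\psi$. Because the limit is the same for every convergent subsequence, the whole sequence $\tilde\psi_n$ converges uniformly on compacts of $\text{supp}(Q)$ to $\psi$, proving (i).

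For (ii), fix a compact $K\subset\subset\text{supp}(Q)\cap\text{dom}(\nabla\psi)$ and suppose \eqref{eq:convergence_nabla} fails: there are $\epsilon>0$, points $\mathbf{x}_n\in K$ and $\mathbf{y}_n\in\partial^c\psi_n(\mathbf{x}_n)$ with $|\mathbf{y}_n-\nabla^c\psi(\mathbf{x}_n)|>\epsilon$ along a subsequence. By Lemma \ref{Lem:technic3} the $\mathbf{y}_n$ lie in a bounded set, so after passing to a further subsequence $\mathbf{x}_n\to\mathbf{x}_\infty\in K$ and $\mathbf{y}_n\to\mathbf{y}_\infty$; then $(\mathbf{x}_\infty,\mathbf{y}_\infty)\in\limsup_n\partial^c\psi_n=\Gamma\subset\partial^c\psi$. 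Since $\mathbf{x}_\infty\in\text{dom}(\nabla\psi)$, $\partial^c\psi(\mathbf{x}_\infty)=\{\nabla^c\psi(\mathbf{x}_\infty)\}$ is a singleton, so $\mathbf{y}_\infty=\nabla^c\psi(\mathbf{x}_\infty)$. On the other hand, by the continuity statement in Lemma \ref{lemma:cont_c} (applied to $\psi$ at $\mathbf{x}_\infty$), $\nabla^c\psi(\mathbf{x}_n)=\mathbf{x}_n-\nabla h^*(\nabla\psi(\mathbf{x}_n))\to\nabla^c\psi(\mathbf{x}_\infty)$; combining, $|\mathbf{y}_n-\nabla^c\psi(\mathbf{x}_n)|\to 0$, contradicting the choice of $\mathbf{x}_n,\mathbf{y}_n$.

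The main obstacle is the verification that the uniform limit $g$ of $\tilde\psi_{n_k}$ is genuinely $c$-concave with superdifferential containing $\Gamma$ — i.e.\ passing the defining inequalities $f(\mathbf{z})\le f(\mathbf{x}_n)+[c(\mathbf{z},\mathbf{y}_n)-c(\mathbf{x}_n,\mathbf{y}_n)]$ to the limit while keeping track of where the inequalities hold (only on $\text{dom}(\psi)$, where continuity of $\psi$ is available) and reconciling this with the a.e.\ differentiability needed to invoke Theorem \ref{theo:iqual}. This requires care about the interplay between $\text{dom}(\psi)$, $\text{supp}(Q)$, and the convex set $\Omega$ of Lemma \ref{lem:Mccan}; the negligible-boundary and connectedness hypotheses on $Q$ are exactly what make this bookkeeping go through.
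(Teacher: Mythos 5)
Your proposal is correct and follows essentially the same path as the paper's proof: fix the potentials at $\mathbf{p}_0$ as in \eqref{potential.fixing}, combine Lemma~\ref{Lem:technic3} with the Lipschitz estimate \eqref{eq:cota2} to get local equi-Lipschitz bounds, apply Arzel\`a--Ascoli to extract a uniformly convergent subsequence, pass the superdifferential inequalities to the limit via Lemma~\ref{Teo:main1} to identify the limit with $\psi$ up to a constant (killed by evaluation at $\mathbf{p}_0$), and argue $(ii)$ by contradiction using boundedness of superdifferentials plus the single-valuedness of $\partial^c\psi$ on $\text{dom}(\nabla\psi)$. The only places you are slightly more explicit than the paper are the subsequence-of-subsequences step to upgrade from subsequential to full-sequence convergence, and the invocation of Lemma~\ref{lemma:cont_c} to get $\nabla^c\psi(\mathbf{x}_n)\to\nabla^c\psi(\mathbf{x}_\infty)$ in part $(ii)$ (the paper leaves this implicit); both are correct and welcome clarifications rather than deviations.
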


%\begin{Remark}
%Note that the assumptions of Theorem \ref{Theo:main2} in the case $c(\mathbf{x},\mathbf{y})=|\mathbf{x}-%\mathbf{y}|^p$, $p>0$ are equivalents to require 
%\begin{align}
%P_n\rightarrow P \ \  \text{and}\ \ Q_n\rightarrow Q \ \ \text{weakly}
%\end{align}
% convergence of the moments of order $p.$ Which is also equivalent to 
 %\begin{align}
%\mathcal{T}_p(P_n,P)\rightarrow 0\ \ \text{and} \ \ \mathcal{T}_p(Q_n,Q)\rightarrow 0.
%\end{align}
%\end{Remark}
We note that Theorem~\ref{Theo:main2} generalizes Theorem 2.8 in \cite{BaLo} to a more general class of costs. Moreover, it also generalizes the results of stability of optimal transport maps, as Corollary 5.23 in \cite{Vi}. An important improvement of Theorem 1.52. in \cite{San} is obtained since we do not require a compact assumption. Finally we will see in the following sections that it is an useful tool to prove a Central Limit Theorem for general Wasserstein distances.

Under stronger assumptions on the way that $P_n$ approaches $P$ and $Q_n$ approaches $Q$ it is possible to prove $L_2$ convergence of the potentials. We show this next for potential costs. We recall that the hypothesis of Corollary~\ref{Coro:main2} are fulfilled when we have weak convergence $P_n\stackrel{w}{\rightarrow} P$, $Q_n\stackrel{w}{\rightarrow}Q$ plus convergence of moments of order $2p$,
$$\int |\mathbf{x} |^{2p} dP_n(\mathbf{x})\longrightarrow \int |\mathbf{x} |^{2p} dP(\mathbf{x}), \ \ \int |\mathbf{y} |^{2p} dQ_n(\mathbf{y})\longrightarrow \int |\mathbf{y} |^{2p} dQ(\mathbf{y}).$$

\begin{Corollary}\label{Coro:main2}
Let $Q\in \mathcal{P}_{2p}(\R^d)$ be such that $Q\ll \ell_d$ and has connected support with negligible boundary. Assume $P_n,P\in \mathcal{P}(\R^d)$ are such that 
\begin{align*}
\mathcal{T}_{2p}(P_n,P)\rightarrow 0.
\end{align*}
If $\psi_n$ (resp. $\psi$) are optimal transport potentials from $Q$ to $P_n$ (resp. from $Q$ to $P$) for the cost $c_p(\mathbf{x},\mathbf{y})=|\mathbf{x}-\mathbf{y}|^p $ and $p>1$, then there exist constants $a_n\in \R$ such that $\tilde{\psi_n}:=\psi_n-a_n\rightarrow \psi$ in the sense of $L^2(Q)$.
\end{Corollary}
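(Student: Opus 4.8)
The plan is to deduce $L^2(Q)$-convergence from the uniform-on-compacts convergence already granted by Theorem~\ref{Theo:main2}, by controlling the potentials outside a large compact set via a uniform integrable majorant. First I would check that the hypotheses of Theorem~\ref{Theo:main2} are met: the cost $c_p$ satisfies (A1)--(A3) for $p>1$ by Remark~\ref{Remark:p_satisfies} and $h(\mathbf{x})=|\mathbf{x}|^p$ is differentiable; $\mathcal{T}_{2p}(P_n,P)\to 0$ forces $P_n\stackrel{w}{\to}P$, and since $Q\in\mathcal{P}_{2p}(\R^d)$ we have $\mathcal{T}_p(P_n,Q)<\infty$, $\mathcal{T}_p(Q,P)<\infty$ (take $Q_n\equiv Q$). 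Thus Theorem~\ref{Theo:main2}(i) yields constants $a_n$ with $\tilde\psi_n:=\psi_n-a_n\to\psi$ uniformly on compact subsets of $\text{supp}(Q)$; normalise so that $\tilde\psi_n(\mathbf{p}_0)=\psi(\mathbf{p}_0)=0$ for a fixed $\mathbf{p}_0\in\text{supp}(Q)\cap\text{dom}(\nabla\psi)$ as in \eqref{potential.fixing}.

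Next I would produce a pointwise upper bound on $|\tilde\psi_n|$ that is in $L^2(Q)$ uniformly in $n$. Being $c_p$-concave and finite, $\tilde\psi_n$ satisfies the Lipschitz-type estimate \eqref{eq:cota}--\eqref{eq:cota2}: for $(\mathbf{x},\mathbf{y})\in\partial^{c}\psi_n$ and the fixed reference pair $(\mathbf{p}_0,\mathbf{b}_0)\in\partial^c\psi_n(\mathbf{p}_0)$,
\begin{equation*}
|\tilde\psi_n(\mathbf{x})|=|\tilde\psi_n(\mathbf{x})-\tilde\psi_n(\mathbf{p}_0)|\le |c_p(\mathbf{x},\mathbf{y})-c_p(\mathbf{p}_0,\mathbf{y})|+|c_p(\mathbf{x},\mathbf{b}_0)-c_p(\mathbf{p}_0,\mathbf{b}_0)|,
\end{equation*}
and since $c_p(\mathbf{x},\mathbf{y})=|\mathbf{x}-\mathbf{y}|^p$ one gets, using $|\mathbf{x}-\mathbf{y}|^p\le 2^{p-1}(|\mathbf{x}|^p+|\mathbf{y}|^p)$ and the fact that $\mathbf{y}\in\partial^c\psi_n(\mathbf{x})$ forces $\mathbf{y}$ to be a transport image (so $\mathbf{y}$ ranges over $\text{Supp}(P_n)$ as $\mathbf{x}$ ranges over $\text{Supp}(Q)$), a bound of the form $|\tilde\psi_n(\mathbf{x})|\le C\big(1+|\mathbf{x}|^p+m_n\big)$ on $\text{supp}(Q)$, where $m_n:=\int|\mathbf{z}|^p\,dP_n(\mathbf{z})$ together with a bounded contribution from $|\mathbf{b}_0|$. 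Because $\mathcal{T}_{2p}(P_n,P)\to0$ implies convergence of moments of order $2p$ (hence of order $p$), $\sup_n m_n<\infty$, and since $Q\in\mathcal{P}_{2p}$ the function $\mathbf{x}\mapsto (1+|\mathbf{x}|^p)^2$ is $Q$-integrable. Therefore $\{\tilde\psi_n^2\}_n$ has a fixed $L^1(Q)$ majorant $G(\mathbf{x})=C'(1+|\mathbf{x}|^{2p})$.

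Finally, combine the two ingredients by dominated convergence. Since $Q(\partial(\text{supp}(Q)))=0$ ($Q$ has negligible boundary and $Q\ll\ell_d$), $Q$-a.e.\ $\mathbf{x}$ lies in $\text{supp}(Q)$, and there $\tilde\psi_n(\mathbf{x})\to\psi(\mathbf{x})$ by the uniform-on-compacts convergence. Thus $\tilde\psi_n\to\psi$ $Q$-a.e., $|\tilde\psi_n|^2\le G\in L^1(Q)$, and the dominated convergence theorem gives $\int|\tilde\psi_n-\psi|^2\,dQ\to0$, i.e.\ $\tilde\psi_n\to\psi$ in $L^2(Q)$, with $a_n:=a_n$ the constants from Theorem~\ref{Theo:main2}(i) (up to the additive normalisation at $\mathbf{p}_0$). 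For the last assertion in the introductory paragraph, if instead only $P_n\stackrel{w}{\to}P$, $Q_n\stackrel{w}{\to}Q$ with convergence of $2p$-th moments is assumed, the identical argument applies with $Q$ replaced by $Q_n$ in the Lipschitz bound and with $\text{supp}(Q)$ replaced by a common compact exhaustion, the only change being that one also invokes $Q_n\stackrel{w}{\to}Q$ plus moment convergence (equivalently $\mathcal{W}_{2p}(Q_n,Q)\to0$) to pass the majorant to the limit.

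The main obstacle I expect is the uniform tail control: Theorem~\ref{Theo:main2} is purely local (uniform on compacts), so the entire weight of the argument rests on showing that the potentials cannot grow faster than $|\mathbf{x}|^p$ with an $n$-uniform constant. This is exactly where the $c_p$-specific bound \eqref{eq:cota2} and the finiteness and uniform boundedness of the $2p$-th moments of $P_n$ (extracted from $\mathcal{T}_{2p}(P_n,P)\to0$) are essential — a general cost $h$ satisfying only (A1)--(A3) would require the corresponding integrability hypothesis built into Assumption~\ref{assumption_dist}, which is why the corollary is stated for potential costs rather than in full generality.
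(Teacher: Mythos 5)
Your overall strategy (pass from local uniform convergence to $L^2(Q)$-convergence by controlling the tails with the $c_p$-concavity Lipschitz estimate) is the right one, and it is essentially what the paper does. But there is a genuine gap in the quantitative step: the claimed pointwise majorant does not exist.

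When you apply \eqref{eq:cota} with the pair $(\mathbf{x},\mathbf{y})$, the point $\mathbf{y}\in\partial^c\psi_n(\mathbf{x})=\{\nabla^c\psi_n(\mathbf{x})\}$ \emph{depends on} $\mathbf{x}$ and on $n$, and $|\mathbf{y}|$ is not controlled pointwise by the $p$-th moment $m_n$ of $P_n$ --- you have only the $Q$-average $\int |\nabla^c\psi_n(\mathbf{x})|^p\,dQ(\mathbf{x}) = m_n$, not a sup bound. So the assertion
$|\tilde\psi_n(\mathbf{x})|\le C(1+|\mathbf{x}|^p+m_n)$ on $\text{supp}(Q)$ is not justified; the actual estimate that comes out of $c_p$-concavity (split into an upper and a lower bound, with the reference pair $(\mathbf{p}_0,\mathbf{b}_0)$ for the upper bound and the moving pair $(\mathbf{x},\nabla^c\psi_n(\mathbf{x}))$ for the lower bound) is
\begin{align*}
-\,|\nabla^c\psi_n(\mathbf{x})-\mathbf{p}_0|^p \ \le\ \tilde\psi_n(\mathbf{x})\ \le\ |\mathbf{x}-\mathbf{b}_{0,n}|^p,
\end{align*}
with $\mathbf{b}_{0,n}\in\partial^c\psi_n(\mathbf{p}_0)$ a bounded sequence but $|\nabla^c\psi_n(\mathbf{x})|^p$ genuinely $n$-dependent and unbounded. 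There is therefore no fixed function $G\in L^1(Q)$ dominating $\tilde\psi_n^2$ for all $n$, and dominated convergence does not apply.

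The correct replacement (and the paper's actual route) is uniform integrability rather than domination: $\mathcal{T}_{2p}(P_n,P)\to 0$ gives $\nabla^c\psi_n\to\nabla^c\psi$ in $L^{2p}(Q)$ (since $\nabla^c\psi_n\#Q=P_n$), hence $\{|\nabla^c\psi_n|^{2p}\}_n$ is $Q$-uniformly integrable; the two-sided bound above then shows $\{\tilde\psi_n^2\}_n$ is $Q$-uniformly integrable, and combined with the $Q$-a.e.\ convergence $\tilde\psi_n\to\psi$ (from Theorem~\ref{Theo:main2}(i) and the negligible boundary of $\text{supp}(Q)$) this yields $L^2(Q)$-convergence by Vitali's theorem. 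So you should replace your dominated-convergence closing step by this uniform-integrability argument; the rest of your outline (verification of the hypotheses of Theorem~\ref{Theo:main2}, normalisation at $\mathbf{p}_0$, use of \eqref{eq:cota}) is sound.
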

\begin{proof}
We can apply Theorem~\ref{Theo:main2} to see that there exist constants $a_n\in \R$ such that $\tilde{\psi_n}=\psi_n-a_n\rightarrow \psi$ 
and $\nabla^c \psi_n \to \nabla^c \psi$ $Q$-a.s.. We note also that the assumption implies that $\int |\nabla^c \psi_n |^{2p}dQ\to 
\int |\nabla^c \psi |^{2p}dQ$ and, therefore, $\int |\nabla^c \psi_n -\nabla^c \psi|^{2p}dQ\to 0$. In particular $|\nabla^c \psi_n|^{2p}$ is $Q$-uniformly integrable.
We relabel the potentials and write $\psi_n$ instead of $\tilde{\psi}_n$ and assume (with no loss of generality) that $\psi(\mathbf{x}_0)=\psi_n(\mathbf{x}_0)=0$ for some $\mathbf{x}_0\in \text{Supp}(Q)\cap \text{dom}(\nabla\psi)$. 
To conclude, it suffices to show that $\psi_n^2$ is $Q$-uniformly integrable. To check this we set $\mathbf{y}_0=\nabla^c \psi(\mathbf{x}_0)$, take $\mathbf{y}_n\in\partial^c \psi_n(\mathbf{x}_0)$ and recall that, by Theorem~\ref{Theo:main2} $\mathbf{y}_n\to \mathbf{y}_0$. 
Now, we observe that
\begin{equation}\label{unif.int1}
\psi_n(\mathbf{x})\leq \psi_n(\mathbf{x}_0)+|\mathbf{x}-\mathbf{y}_n|^p-|\mathbf{x}_0-\mathbf{y}_n|^p\leq |\mathbf{x}-\mathbf{y}_n|^p
\end{equation}
for every $\mathbf{x}$. Similarly,
$$\psi^c_n(\mathbf{y})\leq \psi^c_n(\mathbf{y}_n)+|\mathbf{y}-\mathbf{x}_0|^p-|\mathbf{y}_n-\mathbf{x}_0|^p=|\mathbf{y}-\mathbf{x}_0|^p$$
for every $\mathbf{y}$. 
Since $Q$-a.s. we have $\psi_n(\mathbf{x})+\psi^c_n(\nabla^c \psi_n( \mathbf{x}))=|x-\nabla^c \psi_n( \mathbf{x})|^p$, we conclude that 
$$\psi_n(\mathbf{x})\geq |\mathbf{x}-\nabla^c \psi_n( \mathbf{x})|^p - |\nabla^c \psi_n( \mathbf{x})-\mathbf{x}_0|^p.$$
This last bound together with \eqref{unif.int1} shows that $\psi_n^2$ is $Q$-uniformly integrable and completes the proof.
\end{proof}

\section{Central Limit Theorem and Variance Bounds}\label{sec:CTL}
%In this section we provide a central limit theorem for both one and two samples cases. \\
%\indent  The section is organized as follows. First we deal with the one-sample case, then with the two-sample case and finally with the semi-discrete case. In all cases a linearization of Efron-Stein's inequality is used to bound the variance bound leading to the central limit theorem. We will see that this linearization is not sharp, and a new method arises in the one-sample case. The idea is also an application of Efron-Stein's inequality, in this case not in a direct way as in \cite{BaLo}, but using a sub-sequence of convex combinations proven to be convergent  to our goal limit. In this new framework, we derive Theorem~\ref{Theorem_vaiance_bound_general_sin_delta}, and thanks to Remark~\ref{Sharper_assumptions} and Remark~\ref{Sharper_assumptions2} we prove the optimality of our result.
\subsection{One-sample case}\label{One_sam}
 Let $P\in \mathcal{P}(\R^d)$ and for each $n\in \N$ let $X_1,\dots, X_n$ denote a a sample of independent random variables with distribution  $P$. Consider also the correspondent empirical measure $P_n:=\frac{1}{n}\sum_{k=1}^{n}\delta_{X_k}.$ We are interested in the behavior of the sequence $\{	\sqrt{n}\left(\mathcal{T}_p(P_n,Q)-E \mathcal{T}_p(P_n,Q)\right)\}_{n\in \N}$. \\
We will prove first tightness of this sequence from a suitable variance bound, following  similar arguments as those in \cite{BaLo}. We recall the Efron-Stein inequality and refer for further details to  Chapter 3.1 in~\cite{Boucheron}. Let $(X'_1,\dots, X'_n) $ be an independent copy of $(X_1,\dots, X_n )$, set $Z:=f(X_1,\dots, X_n )$ and for each $i\in \{1, \dots, n \}$ denote $$Z'_i:=f(X_1,\dots,X_{i-1}, X'_{i}, X_{i+1},\dots, X_n ).$$ The Efron-Stein inequality states then that
$$ \text{Var}(Z)\leq  \frac{1}{2}\sum_{i=1}^n E(Z-Z_i)^2=\sum_{i=1}^n E(Z-Z'_i)_+^2.$$
Note that when $X_1,\dots, X_n $ are i.i.d, the inequality can be written as
$$ \text{Var}(Z)\leq  \frac{n}{2}E(Z-Z'_i)^2=n E(Z-Z'_i)_+^2.$$
In this work we present a general bound for the variance of $\mathcal{T}_c(P_n,Q)$ assuming only that one of both probabilities is absolutely continuous with respect to Lebesgue measure and assuming also that the cost is convex. We note that for $X$ with law $P$ the set of points where $h(X-\cdot)$ is not differentiable is a set of Lebesgue measure $0$, hence if $Q\ll \ell_d$ then it is differentiable $Q$-a.s.. As a consequence $\nabla h(X-\textbf{y})$ is well defined $Q$- a.s., and also $E|\nabla h(X-Y)|^{2q_2}$ in the next statement. 
\begin{Lemma}\label{Lemma:var_bound}
Assume $c(\mathbf{x},\mathbf{y})=h(\mathbf{x}-\mathbf{y})$, with $h$ satisfying (A1)-(A3).  Let $P, Q\in \mathcal{P}(\R^d)$ be such that $Q\ll \ell_d$. Assume $X,X',Y$ are independent random variables with $X\sim P$, $X'\sim P$ and $Y\sim Q$. Then 
\begin{equation}\label{eq:variance_bound}
n\text{\em Var}(\mathcal{T}_c(P_n,Q))\leq \inf_{(q_1,q_2)\in \alpha}\Big[\left(E| X-X'|^{2q_1}\right)^{\frac{1}{q_1}} \left(E|\nabla h(X-Y)|^{2q_2}\right)^{\frac{1}{q_2}}\Big],
\end{equation}
where $\alpha=\{(q_1,q_2):\, q_i\in[1,\infty], \frac{1}{q_1}+\frac{1}{q_2}=1\}$.
\end{Lemma}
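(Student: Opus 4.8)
The plan is an Efron--Stein argument in which the effect of a single coordinate swap is controlled by re-routing the optimal transport plan. Write $Z=\mathcal{T}_c(P_n,Q)$ and, for each $i$, let $Z_i'$ be the same quantity with $X_i$ replaced by the independent copy $X_i'$; by exchangeability of $(X_i,X_i')$ one has $E(Z-Z_i')_+^2=E(Z_i'-Z)_+^2$, so the Efron--Stein inequality gives $\mathrm{Var}(Z)\le\sum_{i=1}^n E(Z_i'-Z)_+^2$. Since $Q\ll\ell_d$, Theorem~\ref{Theo:GaMc} applied with $Q$ as the absolutely continuous marginal (the cost $c(\mathbf x,\mathbf y)=h(\mathbf x-\mathbf y)$ being, up to reflection, of the admissible form also in the variable $\mathbf y-\mathbf x$) yields a unique optimal plan for $\mathcal T_c(P_n,Q)$ of the form $(T_Q\times\mathrm{id})_{\#}Q$ with $(T_Q)_{\#}Q=P_n$. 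Setting $C_i:=T_Q^{-1}(X_i)$, the sets $C_1,\dots,C_n$ form a $Q$-a.e.\ partition of $\R^d$ with $Q(C_i)=1/n$ (assuming, as we may, $P$ non-atomic, so the $X_i$ are a.s.\ distinct). Re-routing the mass $Q|_{C_i}$ from $X_i$ to $X_i'$ gives an admissible plan for the perturbed problem, whence
$$Z_i'-Z\ \le\ \int_{C_i}\bigl[c(X_i',\mathbf y)-c(X_i,\mathbf y)\bigr]\,dQ(\mathbf y)\ =\ \int_{C_i}\bigl[h(X_i'-\mathbf y)-h(X_i-\mathbf y)\bigr]\,dQ(\mathbf y).$$

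Next I would linearize using convexity of $h$: $h(X_i'-\mathbf y)-h(X_i-\mathbf y)\le\langle\nabla h(X_i'-\mathbf y),X_i'-X_i\rangle\le|X_i-X_i'|\,|\nabla h(X_i'-\mathbf y)|$, so that $(Z_i'-Z)_+\le|X_i-X_i'|\int_{C_i}|\nabla h(X_i'-\mathbf y)|\,dQ(\mathbf y)$. Squaring and applying the Cauchy--Schwarz inequality on the measure $Q|_{C_i}$, whose total mass is $1/n$, gives
$$(Z_i'-Z)_+^2\ \le\ \frac1n\,|X_i-X_i'|^2\,A_i,\qquad A_i:=\int_{C_i}|\nabla h(X_i'-\mathbf y)|^2\,dQ(\mathbf y),$$
hence $\mathrm{Var}(Z)\le\frac1n\sum_{i=1}^n E\bigl[|X_i-X_i'|^2 A_i\bigr]$.

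To finish, fix conjugate exponents $q_1,q_2$. By Hölder's inequality on $(\Omega,\mathbb{P})$ and the fact that the joint law of $(X_i,X_i',C_i)$ does not depend on $i$, $E[|X_i-X_i'|^2 A_i]\le(E|X-X'|^{2q_1})^{1/q_1}(E A_1^{q_2})^{1/q_2}$. The key estimate is that of $E A_1^{q_2}$: Jensen's inequality on the probability measure $nQ|_{C_1}$ gives $A_1^{q_2}\le n^{-(q_2-1)}\int_{C_1}|\nabla h(X_1'-\mathbf y)|^{2q_2}\,dQ(\mathbf y)$; since $X_1'$ is independent of the sample, hence of $C_1$, Fubini yields
$$E\!\left[\int_{C_1}|\nabla h(X_1'-\mathbf y)|^{2q_2}\,dQ(\mathbf y)\right]=\int_{\R^d}\mathbb{P}(\mathbf y\in C_1)\,E\,|\nabla h(X-\mathbf y)|^{2q_2}\,dQ(\mathbf y)=\frac1n\,E\,|\nabla h(X-Y)|^{2q_2},$$
where the last equality uses that the events $\{\mathbf y\in C_i\}_{i=1}^n$ partition $\Omega$ for $Q$-a.e.\ $\mathbf y$ and are exchangeable, so $\mathbb{P}(\mathbf y\in C_1)=1/n$. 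Thus $E A_1^{q_2}\le n^{-q_2}E|\nabla h(X-Y)|^{2q_2}$, and combining everything, $\mathrm{Var}(Z)\le\frac1n(E|X-X'|^{2q_1})^{1/q_1}(E|\nabla h(X-Y)|^{2q_2})^{1/q_2}$; taking the infimum over $(q_1,q_2)\in\alpha$ gives \eqref{eq:variance_bound}.

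The delicate point — and the reason the bound is of the correct order $1/n$ rather than merely $O(1)$ — is this last counting argument: one must use the cells of a \emph{single} optimal plan, so that the $n$ cell-integrals reassemble into one integral over $\R^d$, and then exploit simultaneously $Q(C_i)=1/n$ (through Cauchy--Schwarz/Jensen) and $\mathbb{P}(\mathbf y\in C_i)=1/n$ (through exchangeability), each contributing a factor $1/n$. The hypothesis $Q\ll\ell_d$ together with (A1)--(A3) is exactly what makes the optimal plan a map from the $Q$-side, so that these cells are well defined; non-atomicity of $P$ and finiteness of the right-hand side (the inequality being trivial otherwise) are routine, the atomic case following by approximation.
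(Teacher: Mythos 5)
Your proof is correct and follows essentially the same route as the paper: Efron--Stein, re-routing via the cells of a single optimal transport map from the $Q$ side (possible since $Q\ll\ell_d$), convexity of $h$ to linearize the cost difference, and the two-fold use of $Q(C_i)=1/n$ (via Cauchy--Schwarz/Jensen) together with the exchangeability-based $\mathbb{P}(\mathbf y\in C_i)=1/n$ to assemble the $n$ cell-integrals into a single integral and extract the correct $1/n^2$ scaling. The only cosmetic differences are that the paper conditions on the cells of the \emph{perturbed} plan (so $\nabla h$ is evaluated at $X_1-\mathbf y$ with $X_1$ independent of the cells) and bounds $(Z-Z')_+$, whereas you use the cells of the \emph{original} plan and bound $(Z_i'-Z)_+$ with $\nabla h$ at $X_i'-\mathbf y$; by symmetry the two yield identical bounds, and your explicit remark on possible ties when $P$ is atomic is a small added care that the paper leaves implicit.
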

We remark that assumptions (A1)-(A3) are only used in Lemma~\ref{Lemma:var_bound} to ensure the existence of an optimal transport map.
\begin{Remark}\label{remark_var}
As a consequence of Lemma~\ref{Lemma:var_bound}, under the same assumptions, if
\begin{align}\label{eq:tight_cond_1}
\inf_{(q_1,q_2)\in \alpha}\Big[\left(E| X-X'|^{2q_1}\right)^{\frac{1}{q_1}} \left(E|\nabla h(X-Y)|^{2q_2}\right)^{\frac{1}{q_2}}\Big]<\infty,
\end{align}
then the sequence $\{	\sqrt{n}\left(\mathcal{T}_c(P_n,Q)-E \mathcal{T}_c(P_n,Q)\right)\}_{n\in \N}$ is tight. 
\end{Remark}
We show next that we can replace assumption~\eqref{eq:tight_cond_1} with a simpler version in the case of potential costs. It should be noted that absolute continuity of $Q$ is not needed for the following result.
\begin{Corollary}\label{Lemma_tigh_p}
If $c(\textbf{x},\textbf{y})=|\textbf{x}-\textbf{y}|^p$ and $p>1$ then 
$$n\text{\em Var}(\mathcal{T}_p(P_n,Q))\leq  \left(E| X-X'|^{2p}\right)^{\frac{1}{p}} \left(p E|X-Y|^{2p}\right)^{\frac{p}{p-1}}$$
\end{Corollary}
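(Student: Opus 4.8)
The plan is to read off the bound directly from Lemma~\ref{Lemma:var_bound}. For the potential cost one has $c_p(\mathbf x,\mathbf y)=h(\mathbf x-\mathbf y)$ with $h(\mathbf z)=|\mathbf z|^p$, which satisfies (A1)--(A3) for $p>1$ by Remark~\ref{Remark:p_satisfies}; moreover $h$ is differentiable on all of $\R^d$ as soon as $p>1$, since $\nabla h(\mathbf z)=p|\mathbf z|^{p-2}\mathbf z$ for $\mathbf z\neq\mathbf 0$ and $p|\mathbf z|^{p-2}\mathbf z\to\mathbf 0$ as $\mathbf z\to\mathbf 0$, so putting $\nabla h(\mathbf 0)=\mathbf 0$ makes $\nabla h$ continuous everywhere. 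Hence $\nabla h(X-Y)=p|X-Y|^{p-2}(X-Y)$ is a well-defined random vector with $|\nabla h(X-Y)|=p|X-Y|^{p-1}$, no matter what $Q$ is; I will indicate at the end why the hypothesis $Q\ll\ell_d$ of Lemma~\ref{Lemma:var_bound} is then superfluous.

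Next I would select, among the admissible pairs in $\alpha$, the conjugate exponents $(q_1,q_2)=\bigl(p,\tfrac{p}{p-1}\bigr)$; these satisfy $q_i\in[1,\infty]$ and $\tfrac1{q_1}+\tfrac1{q_2}=\tfrac1p+\tfrac{p-1}{p}=1$, and the choice is forced by the requirement $q_2(p-1)=p$, which makes the power of $|X-Y|$ produced by the gradient term equal to the power $2p$ already present on the $X$-side. With this choice,
\[
E|\nabla h(X-Y)|^{2q_2}=p^{2q_2}\,E|X-Y|^{2q_2(p-1)}=p^{2q_2}\,E|X-Y|^{2p}.
\]
Bounding the infimum in \eqref{eq:variance_bound} by its value at $(q_1,q_2)$ and extracting the $q_i$-th roots then gives
\[
n\,\mathrm{Var}\bigl(\mathcal T_p(P_n,Q)\bigr)\le\bigl(E|X-X'|^{2p}\bigr)^{1/p}\,\bigl(p^{2q_2}\,E|X-Y|^{2p}\bigr)^{1/q_2},
\]
and since $1/q_2=(p-1)/p$ the second factor equals $p^{2}\bigl(E|X-Y|^{2p}\bigr)^{(p-1)/p}$, which is the bound of the corollary in closed form (so that one should write the right-hand side as $\bigl(E|X-X'|^{2p}\bigr)^{1/p}\,p^{2}\bigl(E|X-Y|^{2p}\bigr)^{(p-1)/p}$, equivalently $\bigl(E|X-X'|^{2p}\bigr)^{1/p}\bigl(p^{2p/(p-1)}E|X-Y|^{2p}\bigr)^{(p-1)/p}$).

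Finally, to drop $Q\ll\ell_d$: if $E|X|^{2p}=\infty$ or $E|Y|^{2p}=\infty$ the right-hand side is $+\infty$ and nothing has to be shown, so suppose both are finite. Approximate $Q$ by the smoothed measures $Q^{(m)}:=Q\ast N(\mathbf 0,m^{-1}I_d)$, which are absolutely continuous; then $\mathcal W_p(Q^{(m)},Q)\le m^{-1/2}\bigl(E|Z|^p\bigr)^{1/p}\to0$ for $Z\sim N(\mathbf 0,I_d)$, and, by dominated convergence, $E_{Q^{(m)}}|Y|^{2p}\to E_Q|Y|^{2p}$ and $E|X-Y^{(m)}|^{2p}\to E|X-Y|^{2p}$, where $Y^{(m)}=Y+m^{-1/2}Z$. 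The bound just proved applies to each $Q^{(m)}$; on the other hand $\mathcal T_p(P_n,\cdot)^{1/p}=\mathcal W_p(P_n,\cdot)$ is $1$-Lipschitz for $\mathcal W_p$, so $\mathcal T_p(P_n,Q^{(m)})\to\mathcal T_p(P_n,Q)$ pointwise, dominated in $L_2(\mathbb P)$ by a fixed integrable quantity (using $\mathcal W_p(P_n,Q)\le(\tfrac1n\sum_{i}|X_i|^p)^{1/p}+(E|Y|^p)^{1/p}$ and $E|X|^{2p}<\infty$), whence $\mathrm{Var}(\mathcal T_p(P_n,Q^{(m)}))\to\mathrm{Var}(\mathcal T_p(P_n,Q))$; letting $m\to\infty$ in the inequality for $Q^{(m)}$ yields the claim for $Q$. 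I do not foresee any genuine obstacle here: the statement is a pure specialization of Lemma~\ref{Lemma:var_bound}, its only substantive inputs being the elementary fact that $|\cdot|^p$ is $C^1$ at the origin when $p>1$ (which is what frees us from $Q\ll\ell_d$) and the exponent bookkeeping that forces $q_2=p/(p-1)$; the smoothing step is the only part that requires a small amount of care, and it is routine.
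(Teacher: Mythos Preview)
Your proof is correct and follows the same route as the paper: specialize Lemma~\ref{Lemma:var_bound} with $(q_1,q_2)=(p,\,p/(p-1))$, then remove the hypothesis $Q\ll\ell_d$ by approximating $Q$ with absolutely continuous measures (you use Gaussian smoothing and dominated convergence, the paper uses a generic $L_{2p}$-approximation and Fatou's lemma). Two side remarks: your observation that the displayed exponent should be $(p-1)/p$ rather than $p/(p-1)$ is well taken, since the paper's own proof simply says ``the result follows'' without writing out the computation; and your claim that the $C^1$ property of $|\cdot|^p$ at the origin is ``what frees us from $Q\ll\ell_d$'' is slightly off---that hypothesis in Lemma~\ref{Lemma:var_bound} is used for the existence of the optimal transport map from $Q$ to $P_n'$, not merely to make $\nabla h(X-Y)$ well defined---so the approximation step is genuinely needed, as you in fact carry out.
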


\begin{proof}
We assume that the right hand side in the last bound is finite (there is nothing to prove otherwise). 
Since $|\nabla h(X_1-\textbf{y})|=p|X_1-\textbf{y}|^{p-1}$, the result follows by taking $q_1=p$, $q_2=\frac p{p-1}$ in \eqref{eq:variance_bound} 
if $Q\ll \ell_d$.For general $Q$ we can take random variables $Y\sim Q$, $Y_m\sim Q_m$, $m\in \N$ with $Q_m\ll \ell_d$ and  $E|Y_m-Y|^{2p}\to 0$. 
Without loss of generality we can assume that $(X,X')$ is independent of $(Y,\{Y_m\}_{m\geq 1}$). For fixed $n \in \N$ we have that $\mathcal{T}_p(P_n,Q_m)$ converges to  $\mathcal{T}_p(P_n,Q)$ a.s. as $m\to\infty$. Also, for each $m\in \N$, we have
\begin{align*}
n\text{Var}(\mathcal{T}_p(P_n,Q_m))\leq  \left(E| X-X'|^{2p}\right)^{\frac{1}{p}} \left(p E|X-Y_m|^{2p}\right)^{\frac{p}{p-1}}=:A_m.
\end{align*}
We observe that $A_m\to A:=\left(E| X-X'|^{2p}\right)^{\frac{1}{p}} \left(p E|X-Y|^{2p}\right)^{\frac{p}{p-1}}$.
Finally, Fatou's lemma enables us to conclude  that
\begin{align*}
n\text{Var}(\mathcal{T}_p(P_n,Q))\leq n \lim\inf_m \text{Var}(\mathcal{T}_p(P_n,Q_m))\leq \lim\inf_m A_m =A.
\end{align*}
\end{proof}

\begin{Remark}\label{Sharper_assumptions}
As in Remark \ref{remark_var}, Corollary \ref{Lemma_tigh_p} yields the conclusion that 
$\{\sqrt{n}\left(\mathcal{T}_p(P_n,Q)-E \mathcal{T}_p(P_n,Q)\right)\}_{n\in \N}$ is tight
if $P$ and $Q$ have finite moments of order $2p$. 
This assumption is sharp in the sense that if $P$ is such that $\{\sqrt{n}\left(\mathcal{T}_p(P_n,Q)-E \mathcal{T}_p(P_n,Q)\right)\}_{n\in \N}$
for $Q=\delta_{\mathbf{0}}$ then $P$ must have finite moment of order $2p$. In fact, the optimal transport map from $P_n$ to $Q$ is $T(\mathbf{x})=\mathbf{0}$, hence, $ \mathcal{T}_p(P_n,Q)=\int |\mathbf{x}|^p dP_n(\mathbf{x})$ and 
\begin{align}\label{Classic_ctl}
\sqrt{n}\left( \mathcal{T}_p(P_n,Q)-E\mathcal{T}_p(P_n,Q) \right)={\textstyle \frac{1}{\sqrt{n}}\sum_{j=1}^n (|X_j|^p-E |X_1|^p)}.
\end{align}
It is well known (see, e.g., Chapter 10 in~{\em \cite{LedouxTalagrand}}) that the random variable in \eqref{Classic_ctl} is tight if and only if  
$E(|X_1|^{2p})<\infty$. Hence, as claimed, a finite moment of order $2p$ is a minimal requirement for $P$ to guarantee that 
$\{\sqrt{n}\left(\mathcal{T}_p(P_n,Q)-E \mathcal{T}_p(P_n,Q)\right)\}_{n\in \N}$ is tight for, say, every $Q$ with bounded support.
\end{Remark}
Condition \eqref{eq:tight_cond_1} is enough to achieve tightness with a cost $c$  satisfying assumptions (A1)-(A3).  In the following theorem we show that, with this assumptions on the cost, there exists a unique weak cluster point of the sequence $\{	\sqrt{n}\left(\mathcal{T}_c(P_n,Q)-E \mathcal{T}_c(P_n,Q)\right)\}_{n\in \N}$, which is Gaussian. Similar work, in the particular case of  the cost $|\cdot |^2$, was done in \cite{BaLo}, where a version of Efron-Stein inequality is used to prove that the empirical transport cost is approximately linear. This approach has also been used for the  entropic regularization of the empirical transport cost in~\cite{MeNi}. This tool based on Efron-Stein inequality requires to have some sort of uniform integrability, which can be guaranteed assuming finite moments of order $4+\delta$. Following arguments developed in~Remark \ref{Sharper_assumptions}, the following result proves that the moment assumption can be relaxed.
\begin{Theorem}\label{Theorem_vaiance_bound_general_sin_delta}
Assume $c(\mathbf{x},\mathbf{y})=h(\mathbf{x}-\mathbf{y})$ with $h$ differentiable and satisfying {\em (A1)-(A3)}.  Let $P, Q\in\mathcal{P}(\R^d)$ be such that $P\ll \ell_d$, $Q\ll \ell_d$, and $P$ has connected support and negligible boundary. Assume further that
\begin{align}\label{assumptio:c2}
\int h(2\mathbf{x})^2dP(\mathbf{x})<\infty \ \ \text{and} \ \ \int h(-2\mathbf{y})^2dQ(\mathbf{y})<\infty,
\end{align}
and \eqref{eq:tight_cond_1} holds. Then 
\begin{align}\label{conclusion1}
\sqrt{n}\left(\mathcal{T}_c(P_n,Q)-E \mathcal{T}_c(P_n,Q)\right)\stackrel{w}{\longrightarrow} N(0, \sigma^2_c(P,Q)), 
\end{align}
 where 
\begin{align}\label{sigma_def}
\sigma^2_c(P,Q):=\int{\varphi(\mathbf{x})^2}dP(\mathbf{x})-\left( \int{\varphi(\mathbf{x})}dP(\mathbf{x})\right)^2,
\end{align}
and $\varphi$ is an optimal transport potential for the cost $c$ from $P$ to $Q$.
\end{Theorem}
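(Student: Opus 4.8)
The plan is to establish the CLT through the linearization strategy announced in the Introduction, with the Efron-Stein inequality providing the variance control and a weak-compactness argument in $L_2(\mathbb{P})$ upgrading an a.s.\ convergence into the needed $L_1$ convergence. First I would fix a $c$-optimal transport potential $\varphi$ from $P$ to $Q$ (unique up to a constant by Corollary~\ref{cor:uniqueness}) and write the candidate linear term $L_n:=\frac{1}{\sqrt n}\sum_{j=1}^n(\varphi(X_j)-E\varphi(X_1))$, which by the classical CLT converges weakly to $N(0,\sigma_c^2(P,Q))$; the moment condition $\int h(2\mathbf{x})^2\,dP<\infty$ guarantees $\varphi\in L_2(P)$ since a $c$-concave potential is dominated by $h(\mathbf{x}-\mathbf{y}_0)$ plus a constant, and $h(\mathbf{x}-\mathbf{y}_0)\le C(1+h(2\mathbf{x}))$ by convexity. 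Then it suffices to show that $R_n:=\sqrt n(\mathcal{T}_c(P_n,Q)-E\mathcal{T}_c(P_n,Q))-L_n\to 0$ in probability.

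Next I would use the dual formulation \eqref{dual}: plugging $(\varphi,\varphi^c)$ into the dual gives $\mathcal{T}_c(P_n,Q)\ge \int\varphi\,dP_n+\int\varphi^c\,dQ$, and using the optimal potential $\varphi_n$ for $(P_n,Q)$ (which exists by Theorem~\ref{Theo:GaMc}) gives $\mathcal{T}_c(P_n,Q)=\int\varphi_n\,dP_n+\int\varphi_n^c\,dQ$ together with $\int\varphi_n\,dP+\int\varphi_n^c\,dQ\le\mathcal{T}_c(P,Q)$. Combining these, the quantity $\mathcal{T}_c(P_n,Q)-\mathcal{T}_c(P,Q)-\int(\varphi-\text{something})\,d(P_n-P)$ is squeezed between $\int(\varphi)\,d(P_n-P)$ and $\int\varphi_n\,d(P_n-P)$; subtracting expectations, $R_n$ is controlled in variance by Efron-Stein applied to $Z=\sqrt n\,\mathcal{T}_c(P_n,Q)$ minus its linear part. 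Concretely, using Lemma~\ref{Lemma:var_bound}-type manipulations, the jackknife difference $Z-Z_i'$ is bounded by $|X_i-X_i'|$ times $|\nabla h(X_i-T_n(X_i))|$-like terms where $T_n$ is the optimal map, so that $\mathrm{Var}(R_n)\le E(U_n)$ where $U_n$ is a nonnegative random variable built from $|X_1-X_1'|^2$ and $|\nabla h(X_1-\cdot)-\nabla h(\cdot)|^2$-type gradient differences evaluated along the optimal maps $T_n$ for $(P_n,Q)$ versus $T$ for $(P,Q)$.

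The key point is then that $U_n\to 0$ a.s.: this is where Theorem~\ref{Theo:main2} enters, giving uniform-on-compacts convergence of the potentials $\psi_n$ (now for the pair $(Q,P_n)\to(Q,P)$, which requires checking the hypotheses of Theorem~\ref{Theo:main2} with $P_n\to_w P$ a.s.\ and $Q$ fixed, using $\mathcal{T}_c(Q,P)<\infty$ from \eqref{assumptio:c2}) and convergence of the associated $c$-superdifferentials, hence of the optimal maps $T_n\to T$ a.s.\ on $\mathrm{supp}(P)\cap\mathrm{dom}(\nabla\psi)$ which is $P$-full. Since we cannot directly conclude $E(U_n)\to 0$, I would instead argue: $\{E U_n\}$ is bounded (by \eqref{eq:tight_cond_1} and the moment hypotheses, via Young's inequality as in Lemma~\ref{Lemma:var_bound}), so $\{U_n\}$ is bounded in $L_1$; passing to the truncations $U_n\wedge M$ which are bounded in $L_2(\mathbb{P})$, the Banach-Alaoglu/sequential weak compactness of the unit ball of $L_2(\mathbb{P})$ gives a weakly convergent subsequence, whose weak limit must be $0$ since $U_n\to 0$ a.s.; taking Ces\`aro means converts weak $L_2$ convergence to strong (norm) convergence along a further subsequence (Mazur/Banach-Saks), and combined with the a.s.\ convergence and uniform integrability (from the $L_1$-boundedness of the non-truncated $U_n$ plus a.s.\ convergence to $0$, using \eqref{eq:tight_cond_1} to get the uniform integrability of the full $U_n$) one obtains $E U_n\to 0$ along the subsequence, hence for the whole sequence since every subsequence has a further subsequence with this property. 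Therefore $\mathrm{Var}(R_n)\to 0$; since also $E R_n\to 0$ follows from the same squeeze and $E(P_n-P)=0$, we get $R_n\to 0$ in probability and the CLT \eqref{conclusion1} follows from Slutsky.

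The main obstacle I anticipate is the passage from ``$U_n\to 0$ a.s.\ and $\{U_n\}$ bounded in $L_1$'' to ``$E U_n\to 0$'': a.s.\ convergence plus $L_1$-boundedness does \emph{not} in general force convergence of means (mass can escape), so one genuinely needs the extra integrability coming from \eqref{eq:tight_cond_1}, which forces uniform integrability of the dominating family; marshalling the gradient bounds \eqref{eq:cota}--\eqref{eq:cota2} together with the weak-compactness-and-Ces\`aro-means device to make this rigorous — in particular checking that the truncated sequences are genuinely bounded in $L_2(\mathbb{P})$ and that the weak limit is forced to be $0$ — is the delicate technical core. A secondary difficulty is verifying the hypotheses of Theorem~\ref{Theo:main2} hold almost surely for the random sequence $P_n$ (which holds off a null set by Varadarajan's theorem, $P_n\to_w P$ a.s.), and handling the potential non-differentiability points of $\psi$, which are $P$-negligible by part (c) of the Gangbo--McCann differentiability result quoted before Theorem~\ref{Theo:GaMc}.
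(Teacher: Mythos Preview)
Your overall architecture---linearize via the optimal potential $\varphi$, control the remainder through Efron--Stein, and feed in the stability result Theorem~\ref{Theo:main2}---is the paper's architecture, and several side points ($\varphi\in L_2(P)$ from \eqref{assumptio:c2}, applying Theorem~\ref{Theo:main2} a.s.\ along the random empirical sequence) are correctly identified. The genuine gap is the step where you conclude $\mathrm{Var}(R_n)\to 0$.

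You are right that $U_n\to 0$ a.s.\ together with $\sup_n EU_n<\infty$ does not give $EU_n\to 0$. But your proposed repair---that condition~\eqref{eq:tight_cond_1} ``forces uniform integrability of the dominating family''---is false. Condition~\eqref{eq:tight_cond_1} yields exactly $\sup_n n^2E(R_n-R_n')^2<\infty$, i.e.\ $L_2$-boundedness of $n(R_n-R_n')$, and nothing more. Uniform integrability of $U_n=n^2(R_n-R_n')^2$ would require $\sup_n EU_n^{1+\varepsilon}<\infty$ for some $\varepsilon>0$, and that is precisely what the stronger hypothesis~\eqref{eq:tight_cond2_delta} of Theorem~\ref{Theorem_vaiance_bound_general} supplies. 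Your truncation--Banach--Alaoglu--Ces\`aro device handles only $U_n\wedge M$ (for which plain dominated convergence already gives $E(U_n\wedge M)\to 0$, so the functional-analytic machinery is idle there); the tail $E(U_n\mathbf 1\{U_n>M\})$ remains uncontrolled. In short, under the hypotheses of Theorem~\ref{Theorem_vaiance_bound_general_sin_delta} you cannot prove $n\,\mathrm{Var}(R_n^{\text{paper}})\to 0$; that is the content of Theorem~\ref{Theorem_vaiance_bound_general}, not of the present theorem.

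The paper never attempts to show $\mathrm{Var}(R_n)\to 0$ termwise. Banach--Alaoglu and Banach--Saks are applied to $n|R_n-R_n'|$ itself (bounded in $L_2(\mathbb P)$ and $\to 0$ a.s.), giving that its Ces\`aro means $g_m=\tfrac1m\sum_{k=1}^m n_k|R_{n_k}-R_{n_k}'|$ converge to $0$ strongly in $L_2(\mathbb P)$. The decisive idea you are missing is then to apply Efron--Stein \emph{to the Ces\`aro average} $G_m=\tfrac1m\sum_{k=1}^m\sqrt{n_k}\,R_{n_k}$ rather than to each $R_n$: a careful index computation (Claim~3 of the proof) yields
\[
\mathrm{Var}(G_m)\ \le\ \tfrac12\,E\Big(\tfrac1m\sum_{k=1}^m n_k|R_{n_k}-R_{n_k}'|\Big)^2\ =\ \tfrac12\,\|g_m\|_{L_2}^2\ \longrightarrow\ 0.
\]
This does not give $\sqrt n(R_n-ER_n)\to 0$ in $L_2$, only that its Ces\`aro means do. The CLT is then closed by a tightness argument: Lemma~\ref{Lemma:var_bound} gives tightness of $\sqrt n(\mathcal T_c(P_n,Q)-E\mathcal T_c(P_n,Q))$, so every subsequence has a weak cluster point $\gamma$; along that subsequence the Ces\`aro means also converge to $\gamma$, while by the above and the classical CLT for $L_n$ they converge to $N(0,\sigma_c^2(P,Q))$, whence $\gamma=N(0,\sigma_c^2(P,Q))$.

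A secondary inaccuracy: the Efron--Stein upper bound on $n(R_n-R_n')_+$ used for the a.s.\ part is the potential difference $\varphi_n(X_1)-\varphi(X_1)-\varphi_n(X_1')+\varphi(X_1')$ (obtained from duality), not a gradient-of-$h$ expression along the maps $T_n$ versus $T$; the gradient bound from Lemma~\ref{Lemma:var_bound} is what provides the $L_2$-boundedness. Keeping these two bounds separate is what makes the argument work.
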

It should be noted at this point that the optimal transport potential in Theorem \ref{Theorem_vaiance_bound_general_sin_delta} is unique, up to the addition of a constant, as a consequence of Corollary \ref{cor:uniqueness}. It follows from the proof of Theorem \ref{Theorem_vaiance_bound_general_sin_delta} that $\varphi\in L_2(P)$. This implies that the limiting variance, $\sigma^2_c(P,Q)$, is well-defined and finite.

The proof of Theorem~\ref{Theorem_vaiance_bound_general_sin_delta} initially follows the path in~\cite{BaLo}. This means that we look at
\begin{align}\label{eq_def_R}
R_n:=\mathcal{T}_c(P_n,Q)-\int{\varphi(\mathbf{x})}dP_n(\mathbf{x}),
\end{align}
where $\varphi$ is an optimal transport potential from $P$ to $Q$ for the cost $c$. We write $R_n'$ for the version of $R_n$ computed from $X_1',X_2,\ldots,X_n$. Using the stability results for optimal transport potentials one can prove that $n(R_n-R'_n)\stackrel{a.s.}\longrightarrow 0$. 
If $n^2E(R_n-R'_n)^2\to 0$ then the conclusion in Theorem~\ref{Theorem_vaiance_bound_general_sin_delta} follows inmediately. Variance bounds obtained from the Efron-Stein inequality yield $n^2E(R_n-R'_n)^2\leq M$ under mild moment assumptions. However, the convergence $n^2E(R_n-R'_n)^2\to 0$ may fail wihtout some stronger assumptions (such as the $4+\delta$ moment assumption in~\cite{BaLo}). Our proof of Theorem~\ref{Theorem_vaiance_bound_general_sin_delta}
avoids these stronger assumptions by using the following workaround. First, the bound $n^2E(R_n-R'_n)^2\leq M$ and the Banach-Alaoglu Theorem (see, e.g., Theorem 3.16 in~\cite{Brezis}) show that, along subsequences, $n(R_n-R'_n)$ converges weakly to 0 in the Hilbert (hence reflexive) space $L_2(\mathbb{P})$. Then, the Banach-Saks property of Hilbert spaces (see, e.g., Exercise 5.34 in~\cite{Brezis}) shows that (taking further subsequences if necessary) there exists a Ces\`aro mean of $\{ n|R_{n}-R'_{n}|\}_{n\in \N}$ convergent to $0$ in $L^2(\mathbb{P})$ in the strong sense. We show then that 
the same holds with the Ces\`aro means of the sequence $\sqrt{n}(R_n-ER_n)$ and from this we conclude that $\sqrt{n}(R_n-ER_n)\to 0$ in probability, which yields, as a consequence, \eqref{conclusion1}. All the details are given in the proof postponed to the Appendix.

In general it is not possible to guarantee moment convergence in \eqref{conclusion1} under the minimal assumptions of Theorem~\ref{Theorem_vaiance_bound_general_sin_delta}.  The following theorem guarantees convergence of variances under slightly stronger assumptions.
\begin{Theorem}\label{Theorem_vaiance_bound_general}
Assume $c(\mathbf{x},\mathbf{y})=h(\mathbf{x}-\mathbf{y})$ with $h$ differentiable and satisfying {\em (A1)-(A3)}.  Let $P, Q\in\mathcal{P}(\R^d)$ be such that $P\ll \ell_d$, $Q\ll \ell_d$ and $P$ has connected support and negligible boundary. Suppose that \eqref{assumptio:c2} holds and assume $R_n$ is as in \eqref{eq_def_R}. Assume further that $X,X'$ and $Y$ are independent random variables with $X\sim P$, $X'\sim P$ and $Y\sim Q$.
If there exists some $\delta>0$ such that ,
\begin{equation}\label{eq:tight_cond2_delta}
\inf_{q_1,q_2\in[1,\infty]:\ \frac{1}{q_1}+\frac{1}{q_2}=1}\left[E| X-X'|^{(2+\delta)q_1}E|\nabla h(X-Y)|^{(2+\delta)q_2}\right] <\infty,\\
\end{equation}
then $n\text{\em Var}(R_n)\longrightarrow 0$. As a consequence,
\begin{align}\label{eq:desigualdad1}
n\text{\em Var}(\mathcal{T}_c(P_n,Q))\longrightarrow \sigma^2_c(P,Q).
\end{align}
\end{Theorem}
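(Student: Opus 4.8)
The plan has two parts: first establish $n\,\mathrm{Var}(R_n)\to 0$, and then deduce \eqref{eq:desigualdad1} from it. For the deduction, write $\mathcal{T}_c(P_n,Q)=R_n+\frac1n\sum_{i=1}^n\varphi(X_i)$, with $\varphi$ the optimal transport potential from $P$ to $Q$. The hypotheses here imply those of Theorem~\ref{Theorem_vaiance_bound_general_sin_delta} — in particular \eqref{eq:tight_cond_1} is implied by \eqref{eq:tight_cond2_delta}, since on a probability space $L^{2+\delta}\subset L^{2}$ — so $\varphi\in L_2(P)$ and $\mathrm{Var}\bigl(\tfrac1n\sum_i\varphi(X_i)\bigr)=\sigma_c^2(P,Q)/n$ is finite. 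Expanding the variance of the sum and bounding the cross term by the Cauchy--Schwarz inequality,
\begin{align*}
\Bigl|n\,\mathrm{Var}(\mathcal{T}_c(P_n,Q))-n\,\mathrm{Var}(R_n)-\sigma_c^2(P,Q)\Bigr|\;\le\;2\sqrt{n\,\mathrm{Var}(R_n)}\,\sqrt{\sigma_c^2(P,Q)},
\end{align*}
and the right-hand side tends to $0$ once $n\,\mathrm{Var}(R_n)\to 0$; this is \eqref{eq:desigualdad1}.

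For $n\,\mathrm{Var}(R_n)\to 0$ I would use the Efron--Stein inequality in the form $n\,\mathrm{Var}(R_n)\le\tfrac12\,E\bigl[(n(R_n-R_n'))^2\bigr]$, where $R_n'$ is $R_n$ recomputed after replacing $X_1$ by an independent copy $X_1'$. Since all the hypotheses of Theorem~\ref{Theorem_vaiance_bound_general_sin_delta} hold, its proof already gives $n(R_n-R_n')\to 0$ almost surely. Hence it suffices to show that the family $\{(n(R_n-R_n'))^2\}_{n\ge 1}$ is uniformly integrable; combined with the a.s.\ convergence this yields $E[(n(R_n-R_n'))^2]\to 0$. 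Uniform integrability will in turn follow from the uniform moment bound
\begin{align*}
\sup_{n\ge 1}\;E\,\bigl|n(R_n-R_n')\bigr|^{2+\delta}\;<\;\infty,
\end{align*}
which is the analogue of Lemma~\ref{Lemma:var_bound} at the $L_{2+\delta}$ scale and is precisely where the strengthened assumption \eqref{eq:tight_cond2_delta} is used.

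To prove this bound I would argue as in the proof of Lemma~\ref{Lemma:var_bound}. Writing $n(R_n-R_n')=n\bigl(\mathcal{T}_c(P_n,Q)-\mathcal{T}_c(P_n',Q)\bigr)-\bigl(\varphi(X_1)-\varphi(X_1')\bigr)$, one controls the transport-cost difference by a feasible-plan comparison: if $\pi_n'$ is an optimal plan from $P_n'$ to $Q$, disintegrated as $\pi_n'=\tfrac1n\delta_{X_1'}\otimes\nu_1+\tfrac1n\sum_{i\ge2}\delta_{X_i}\otimes\nu_i$, replacing $\delta_{X_1'}$ by $\delta_{X_1}$ gives a feasible plan from $P_n$ to $Q$, whence $n\bigl(\mathcal{T}_c(P_n,Q)-\mathcal{T}_c(P_n',Q)\bigr)\le\int[c(X_1,\mathbf{y})-c(X_1',\mathbf{y})]\,d\nu_1(\mathbf{y})$, and by convexity of $h$ the integrand is at most $|X_1-X_1'|\bigl(|\nabla h(X_1-\mathbf{y})|+|\nabla h(X_1'-\mathbf{y})|\bigr)$; a symmetric argument gives the reverse inequality. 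Raising to the power $2+\delta$, using Jensen's inequality to pull the power inside the probability measure $\nu_1$, and then Hölder's inequality with a pair $(q_1,q_2)$ realizing the infimum in \eqref{eq:tight_cond2_delta}, the estimate reduces to $E|X_1-X_1'|^{(2+\delta)q_1}$ and to $E\bigl[\int|\nabla h(X_1-\mathbf{y})|^{(2+\delta)q_2}\,d\nu_1(\mathbf{y})\bigr]$; the key point, exactly as in Lemma~\ref{Lemma:var_bound}, is that $\nu_1$ depends only on $X_1',X_2,\dots,X_n$ and is therefore independent of $X_1$, so conditioning on $\nu_1$ and using that its mean measure is $Q$ replaces this by $E|\nabla h(X-Y)|^{(2+\delta)q_2}$ with $X\sim P$, $Y\sim Q$ independent, finite by \eqref{eq:tight_cond2_delta}. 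The linearization term $\varphi(X_1)-\varphi(X_1')$ cannot be controlled on its own (under \eqref{assumptio:c2} one only obtains $\varphi\in L_2(P)$, not $\varphi\in L_{2+\delta}(P)$), so it has to be handled through its asymptotic cancellation with the transport-cost difference: weak duality applied with the pairs $(\varphi_n,\varphi_n^c)$ and $(\varphi_n',(\varphi_n')^c)$, for the optimal potentials $\varphi_n,\varphi_n'$ of $(P_n,Q)$ and $(P_n',Q)$, shows that $n(R_n-R_n')$ lies between $(\varphi_n'-\varphi)(X_1)-(\varphi_n'-\varphi)(X_1')$ and $(\varphi_n-\varphi)(X_1)-(\varphi_n-\varphi)(X_1')$, reducing matters to a uniform $L_{2+\delta}$ bound on $(\varphi_n-\varphi)(X_1)$; this is obtained from the $c$-superdifferential Lipschitz estimates \eqref{eq:cota}--\eqref{eq:cota2}, the uniform boundedness of the $c$-superdifferentials on compacts (Lemma~\ref{Lem:technic3}), and again \eqref{eq:tight_cond2_delta} together with \eqref{assumptio:c2} to bound the terms in which $\nabla h$ is evaluated at the images of the optimal maps.

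The main obstacle is exactly this last uniform $(2+\delta)$-moment estimate: one must carry out the Efron--Stein / Hölder book-keeping so that the moments that appear are precisely those in \eqref{eq:tight_cond2_delta} — which forces the systematic use of the independence of the swapped disintegration (or optimal map) from the swapped coordinate together with the marginal identities — and one must treat the linearization term by exploiting its cancellation with the transport-cost difference rather than by bounding $\varphi$ directly, since $\varphi$ need not belong to $L_{2+\delta}(P)$ under the stated assumptions.
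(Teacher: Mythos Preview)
Your overall strategy --- Efron--Stein plus the almost sure convergence $n(R_n-R_n')\to 0$ from Claim~1 of Theorem~\ref{Theorem_vaiance_bound_general_sin_delta} plus uniform integrability --- matches the paper's, and your deduction of \eqref{eq:desigualdad1} from $n\,\mathrm{Var}(R_n)\to 0$ via Cauchy--Schwarz is correct. But you take an unnecessary detour that creates precisely the difficulty you flag at the end.

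The detour is your reduction to $\sup_n E|n(R_n-R_n')|^{2+\delta}<\infty$. This is \emph{sufficient} for uniform integrability of $(n(R_n-R_n'))^2$, but it is not necessary, and insisting on it is what forces you to worry about whether $\varphi\in L_{2+\delta}(P)$ and then to propose the workaround via $(\varphi_n-\varphi)(X_1)$. The paper's route is simpler: from
\[
n(R_n-R_n')=n\bigl(\mathcal{T}_c(P_n,Q)-\mathcal{T}_c(P_n',Q)\bigr)-\bigl(\varphi(X_1)-\varphi(X_1')\bigr)
\]
one has $(n(R_n-R_n'))^2\le 2\bigl(n(\mathcal{T}_c(P_n,Q)-\mathcal{T}_c(P_n',Q))\bigr)^2+2\bigl(\varphi(X_1)-\varphi(X_1')\bigr)^2$. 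The second term is a \emph{fixed} integrable random variable (since $\varphi\in L_2(P)$, as established in Claim~1), hence trivially uniformly integrable; no $L_{2+\delta}$ control on $\varphi$ is needed at all. Only the first term requires the $(2+\delta)$-moment bound, and that is exactly your H\"older argument applied to $n(Z-Z')_+$ (and, by symmetry in law, to $n(Z-Z')_-$). This is the whole content of the paper's proof.

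Your proposed workaround --- a uniform $L_{2+\delta}$ bound on $(\varphi_n-\varphi)(X_1)$ via \eqref{eq:cota}--\eqref{eq:cota2} and Lemma~\ref{Lem:technic3} --- is not convincingly justified as written: those tools give boundedness of $\psi_n$ and $\partial^c\psi_n$ on compact subsets of the support, along subsequences, which does not directly yield an $L_{2+\delta}(P)$ bound on $\varphi_n(X_1)$ when $P$ has unbounded support. So drop the workaround and use the simpler split above; the obstacle you identify then disappears.
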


To get a more clear picture about the sharpness of the assumptions in Theorems \ref{Theorem_vaiance_bound_general_sin_delta} and 
\ref{Theorem_vaiance_bound_general}, we include the particular version for  potential costs, $c_p(\mathbf{x},\mathbf{y})=|\mathbf{x}-\mathbf{y}|^p$ for $p>1$ (recall from Remark~\ref{Remark:p_satisfies} that $c_p$ satisfies (A1)-(A3) for $p>1$).
\begin{Corollary}\label{Theo:centrallim_p}
Assume $p>1$. Let $P, Q\in\mathcal{P}(\mathbb{R}^d)$ be such that $P\ll \ell_d$ and has connected support and negligible boundary. If $P$ and $Q$ have finite moments of order $2p$, then
\begin{align}\label{eq:ctl_p}
\sqrt{n}\left(\mathcal{T}_p(P_n,Q)-E \mathcal{T}_p(P_n,Q)\right)\stackrel{w}{\longrightarrow} N(0, \sigma^2_p(P,Q)),
\end{align}
where 
\begin{align}\label{def:sigma_p}
 \sigma^2_p(P,Q):=\int{\varphi(\mathbf{x})^2}dP(\mathbf{x})-\left( \int{\varphi(\mathbf{x})}dP(\mathbf{x})\right)^2,
\end{align}
and $\varphi$ is an optimal transport potential from $P$ to $Q$ for $c_p$.
Moreover if $P$ has a finite moment of order $2p+\epsilon$ for some $\epsilon>0$, then
\begin{align}\label{second_claim_p}
	n\text{\em Var}(\mathcal{T}_p(P_n,Q))\longrightarrow \sigma^2_p(P,Q).
\end{align}
\end{Corollary}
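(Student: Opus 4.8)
The plan is to obtain both assertions as corollaries of Theorem~\ref{Theorem_vaiance_bound_general_sin_delta} (for \eqref{eq:ctl_p}) and Theorem~\ref{Theorem_vaiance_bound_general} (for \eqref{second_claim_p}), by checking that their hypotheses hold for the cost $h(\mathbf{x})=|\mathbf{x}|^p$, $p>1$, under the stated moment conditions. First, $h$ is differentiable on all of $\R^d$ when $p>1$, with $\nabla h(\mathbf{z})=p|\mathbf{z}|^{p-2}\mathbf{z}$ for $\mathbf{z}\neq\mathbf{0}$ and $\nabla h(\mathbf{0})=\mathbf{0}$, and it satisfies (A1)--(A3) by Remark~\ref{Remark:p_satisfies}; the structural requirements on $P$ ($P\ll\ell_d$, connected support, negligible boundary) are part of the hypothesis. (The absolute continuity of $Q$ that those two theorems also require, when it is not assumed, is dealt with by a standard approximation of $Q$ by absolutely continuous $Q_m$ with convergence of the moments of order $2p$ and passage to the limit, as in the proof of Corollary~\ref{Lemma_tigh_p}.)

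For \eqref{eq:ctl_p} it then remains to verify \eqref{assumptio:c2} and \eqref{eq:tight_cond_1}. Since $h(2\mathbf{x})^2=2^{2p}|\mathbf{x}|^{2p}$ and $h(-2\mathbf{y})^2=2^{2p}|\mathbf{y}|^{2p}$, condition \eqref{assumptio:c2} is just the finiteness of the moments of order $2p$ of $P$ and $Q$. For \eqref{eq:tight_cond_1} it suffices to exhibit one admissible pair: take $q_1=p$ and $q_2=p/(p-1)$, so that $1/q_1+1/q_2=1$; then $E|X-X'|^{2q_1}=E|X-X'|^{2p}<\infty$ and, since $|\nabla h(\mathbf{z})|=p|\mathbf{z}|^{p-1}$ and $(p-1)\,2q_2=2p$, also $E|\nabla h(X-Y)|^{2q_2}=p^{2q_2}\,E|X-Y|^{2p}<\infty$, where finiteness follows from $|X-X'|\le|X|+|X'|$, $|X-Y|\le|X|+|Y|$ and the $2p$-th moment hypotheses. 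Theorem~\ref{Theorem_vaiance_bound_general_sin_delta} then gives \eqref{eq:ctl_p} with limiting variance $\sigma_c^2(P,Q)$ for $c=c_p$, which coincides with \eqref{def:sigma_p}; this is well defined and finite because the potential $\varphi$ is unique up to an additive constant (Corollary~\ref{cor:uniqueness}) and belongs to $L_2(P)$, as noted after Theorem~\ref{Theorem_vaiance_bound_general_sin_delta}.

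For the second claim we must produce $\delta>0$ and an admissible pair $(q_1,q_2)$ making the left-hand side of \eqref{eq:tight_cond2_delta} finite; the idea is to rebalance the exponents away from $(p,p/(p-1))$. Now $P$ is assumed to have a finite moment of order $2p+\epsilon$. For $\delta>0$ with $\delta(p-1)<2$, set $q_2:=\frac{2p}{(2+\delta)(p-1)}$ and $q_1:=\frac{2p}{2-\delta(p-1)}$; a short computation shows $1/q_1+1/q_2=1$ and $q_1,q_2\geq1$, so this pair is admissible. By construction $(2+\delta)(p-1)q_2=2p$, hence $E|\nabla h(X-Y)|^{(2+\delta)q_2}=p^{(2+\delta)q_2}\,E|X-Y|^{2p}<\infty$, again by the $2p$-th moments. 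On the other hand $(2+\delta)q_1=\frac{2p(2+\delta)}{2-\delta(p-1)}\to2p$ as $\delta\downarrow0$, so for $\delta$ small enough $(2+\delta)q_1\leq2p+\epsilon$ and then $E|X-X'|^{(2+\delta)q_1}<\infty$ by the moment hypothesis on $P$. Thus \eqref{eq:tight_cond2_delta} holds and Theorem~\ref{Theorem_vaiance_bound_general} yields $n\,\mathrm{Var}(\mathcal{T}_p(P_n,Q))\to\sigma_p^2(P,Q)$, that is, \eqref{second_claim_p}.

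The only genuinely delicate point is the exponent bookkeeping in the last step: one must check at once that $(q_1,q_2)$ is a legitimate pair of H\"older conjugates in $[1,\infty]$ and that, for a single small $\delta$, both $(2+\delta)q_1\leq2p+\epsilon$ and $(2+\delta)(p-1)q_2=2p$ can be met — a brief limiting argument as $\delta\downarrow0$. Everything else is a direct translation of the moment hypotheses into \eqref{assumptio:c2}, \eqref{eq:tight_cond_1} and \eqref{eq:tight_cond2_delta}.
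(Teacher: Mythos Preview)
Your proposal is correct and follows essentially the same route as the paper: verify (A1)--(A3) and \eqref{assumptio:c2} for $h(\mathbf{x})=|\mathbf{x}|^p$, check \eqref{eq:tight_cond_1} with $(q_1,q_2)=(p,p/(p-1))$ to invoke Theorem~\ref{Theorem_vaiance_bound_general_sin_delta}, and then choose a Hölder pair with $(2+\delta)(p-1)q_2=2p$ and $(2+\delta)q_1\le 2p+\epsilon$ to invoke Theorem~\ref{Theorem_vaiance_bound_general}; the approximation of a general $Q$ by absolutely continuous $Q_m$ is handled exactly as in Corollary~\ref{Lemma_tigh_p}. Your exponent bookkeeping in the second step (fixing $q_2=2p/((2+\delta)(p-1))$ and letting $\delta\downarrow 0$) is in fact tidier than the paper's explicit choice of $\delta$ in terms of $\epsilon$, but the underlying idea is identical.
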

\begin{proof}
A look at the proof of Corollary~\ref{Lemma_tigh_p} shows that finite $2p$ moments guarantee that \eqref{eq:tight_cond_1} holds. Clearly, \eqref{assumptio:c2} holds too, and we can apply Theorem \ref{Theorem_vaiance_bound_general_sin_delta} to conclude \eqref{eq:ctl_p} (the fact that absolute continuity of $Q$ is not necessary follows using the approximation argument in the proof of Corollary \ref{Lemma_tigh_p}). For \eqref{second_claim_p} we take in \eqref{eq:tight_cond2_delta} the conjugate pair $q_1=\frac{2p}{(p-1)2+\delta}$ and $q_2=\frac{q_1}{q_1-1}=\frac{2p}{2p-(2+\delta)}$, where $\delta=2-2p(1-\frac{1}{2p+\epsilon})$. With these choices  \eqref{eq:tight_cond2_delta} becomes
\begin{align*}
\begin{split}
\left(E| X_1-X'_1|^{(2p+\epsilon)}\right) \left(E\left(\int_{\R^d}  |X_1-\textbf{y}|^{2p} d Q(\textbf{y})\right)\right) <\infty,\\
\end{split}
\end{align*}
and we apply Theorem \ref{Theorem_vaiance_bound_general}. The case of  finite moment of order $2p+\epsilon$ for $Q$ follows similarly.
\end{proof}
\begin{Remark}\label{Sharper_assumptions2}{
As noted in  Remark~\ref{Sharper_assumptions}, the assumption of finite moments of order $2p$ (ar least for $P$) cannot be relaxed for tightness and, in that sense, the moment assumptions in Theorem \ref{Theo:centrallim_p} are sharp and cannot be improved. On the other hand, in the case $p=2$, Corollary \ref{Theo:centrallim_p} improves Theorem~4.1 in {\em \cite{BaLo}}, not only by proving that finite fourth moments are enough (the original assumption was finite moments of order $4+\epsilon$ in {\em \cite{BaLo}}), but also by assuming  milder regularity assumptions on $P$ and $Q$. In this new setting,  $P$ must have a connected support with a negligible boundary, relaxing the assumption of a convex support. The only price to pay is that variance convergence may fail under this relaxed assumptions.}
\end{Remark} 
So far we have considered CLTs for $\mathcal{T}_p(P,Q)$. Its  $p-$root $\mathcal{W}_p(P,Q):=(\mathcal{T}_p(P,Q))^{1/p}$ defines a well-known metric in the space of probabilities with finite moments of order $p$, the $p$-Wasserstein distance. Proving a CLT for the empirical Wasserstein distance is not a straightforward application of a delta-method and Corollary \ref{Theo:centrallim_p}, since we do not have a fixed centering constant in Theorem~\ref{Theo:centrallim2}. Yet, we can circunvent this issue and prove the following result.
\begin{Theorem}\label{lemma:Variance_bounds_dista}
Let $P\ne Q\in\mathcal{P}(\R^d)$ be such that $P\ll \ell_d$ and has connected support and negligible boundary.  Assume $P$ and $Q$ have finite moments of order $2p$  and $p>1$. Then, %We set  
%\begin{align}
%	R_n:=\mathcal{W}_p(P_n,Q)-\frac{1}{p\mathcal{W}_p(P,Q)^{p-1}}\int{\varphi(\mathbf{x})}dP_n(\mathbf{x}),
%\end{align}
% where $\varphi$ is an optimal transport potential, unique up to a constant, from $P$ to $Q$. Then 
%\begin{align*}
%	n\text{Var}(R_n)\longrightarrow 0.
%\end{align*}
if $\sigma^2_p(P,Q)$ is defined as in Theorem \ref{Theo:centrallim_p},

\begin{align*}
\sqrt{n}\left(\mathcal{W}_p(P_n,Q)-\left(E [\mathcal{W}^p_p(P_n,Q)]\right)^{\frac{1}{p}}\right)\stackrel{w}{\longrightarrow} N(0, \beta^2_p(P,Q)), 
\end{align*}
where $\beta^2_p(P,Q):=\left(\frac{1}{p\mathcal{W}^p_p(P_n,Q)^{p-1}}\right)^2\sigma^2_p(P,Q)$.
\end{Theorem}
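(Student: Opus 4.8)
The plan is to deduce this CLT for $\mathcal{W}_p(P_n,Q)$ from Corollary~\ref{Theo:centrallim_p} via a Taylor expansion of the map $t\mapsto t^{1/p}$, but with the centering constant handled carefully since, as noted, the natural centering in Corollary~\ref{Theo:centrallim_p} is $E\mathcal{T}_p(P_n,Q)=E[\mathcal{W}_p^p(P_n,Q)]$ rather than $\mathcal{W}_p^p(P,Q)$. First I would record that, by Corollary~\ref{Theo:centrallim_p}, $\sqrt{n}(\mathcal{T}_p(P_n,Q)-E\mathcal{T}_p(P_n,Q))\rightsquigarrow N(0,\sigma_p^2(P,Q))$; since $P\neq Q$ we have $\mathcal{W}_p^p(P,Q)>0$, and because $P$ and $Q$ have finite $2p$-th moments, $\mathcal{T}_p(P_n,Q)\to\mathcal{T}_p(P,Q)$ a.s.\ and $E\mathcal{T}_p(P_n,Q)\to\mathcal{T}_p(P,Q)=\mathcal{W}_p^p(P,Q)>0$. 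Writing $a_n:=E\mathcal{T}_p(P_n,Q)$, I would apply the delta method to the function $g(t)=t^{1/p}$, which is $C^1$ on a neighborhood of the positive limit $\mathcal{W}_p^p(P,Q)$, with derivative $g'(t)=\tfrac1p t^{1/p-1}$. The delta method then gives
\begin{align*}
\sqrt{n}\bigl(\mathcal{W}_p(P_n,Q)-a_n^{1/p}\bigr)\stackrel{w}{\longrightarrow} N\!\left(0,\ \Bigl(\tfrac{1}{p\,\mathcal{W}_p^p(P,Q)^{\,(p-1)/p}}\Bigr)^{2}\sigma_p^2(P,Q)\right).
\end{align*}

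The one subtlety is that the standard delta method is usually stated for a fixed centering constant, whereas here we center $\mathcal{T}_p(P_n,Q)$ at the moving point $a_n$ and simultaneously center $\mathcal{W}_p(P_n,Q)=\mathcal{T}_p(P_n,Q)^{1/p}$ at $a_n^{1/p}$. To make this rigorous I would write, using the mean value theorem, $\mathcal{W}_p(P_n,Q)-a_n^{1/p}=g(\mathcal{T}_p(P_n,Q))-g(a_n)=g'(\xi_n)\,(\mathcal{T}_p(P_n,Q)-a_n)$ for some random $\xi_n$ between $\mathcal{T}_p(P_n,Q)$ and $a_n$. Both $\mathcal{T}_p(P_n,Q)$ and $a_n$ converge (a.s.\ and deterministically, respectively) to $\mathcal{W}_p^p(P,Q)>0$, so $\xi_n\to\mathcal{W}_p^p(P,Q)$ a.s., and hence $g'(\xi_n)\to g'(\mathcal{W}_p^p(P,Q))=\tfrac{1}{p}\mathcal{W}_p^p(P,Q)^{1/p-1}$ a.s., in particular in probability. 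Then
\begin{align*}
\sqrt{n}\bigl(\mathcal{W}_p(P_n,Q)-a_n^{1/p}\bigr)=g'(\xi_n)\cdot\sqrt{n}\bigl(\mathcal{T}_p(P_n,Q)-a_n\bigr),
\end{align*}
and Slutsky's theorem applied to the product of a sequence converging in probability to the constant $\tfrac{1}{p}\mathcal{W}_p^p(P,Q)^{1/p-1}$ and a sequence converging weakly to $N(0,\sigma_p^2(P,Q))$ yields convergence to $N(0,\beta_p^2(P,Q))$ with $\beta_p^2(P,Q)=\bigl(\tfrac{1}{p\,\mathcal{W}_p^p(P,Q)^{p-1}}\bigr)^2\sigma_p^2(P,Q)$, matching the stated expression (noting $\mathcal{W}_p^p(P,Q)^{1/p-1}=\mathcal{W}_p^{p}(P,Q)^{-(p-1)/p}$, so its square is $\mathcal{W}_p^p(P,Q)^{-2(p-1)/p}=\bigl(\mathcal{W}_p^p(P,Q)^{p-1}\bigr)^{-2/p}$; any discrepancy with the displayed $\beta_p^2$ is a matter of how the exponent is written, and I would reconcile the notation in the final write-up).

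The main obstacle, such as it is, is purely expository rather than mathematical: one must be careful that the centering quantity $E[\mathcal{W}_p^p(P_n,Q)]$ appearing inside the $p$-th root in the statement is exactly $a_n=E\mathcal{T}_p(P_n,Q)$, so that no extra bias term of order $\sqrt{n}(a_n^{1/p}-\mathcal{W}_p^p(P,Q)^{1/p})$ — which need not vanish — ever enters; the whole point of centering at $(E[\mathcal{W}_p^p(P_n,Q)])^{1/p}$ rather than at $\mathcal{W}_p(P,Q)$ is precisely to sidestep control of this bias, which is why a naive delta method applied to a fixed center would not work here. A secondary point worth a line is justifying $E\mathcal{T}_p(P_n,Q)\to\mathcal{W}_p^p(P,Q)$: this follows from a.s.\ convergence $\mathcal{T}_p(P_n,Q)\to\mathcal{W}_p^p(P,Q)$ together with uniform integrability of $\{\mathcal{T}_p(P_n,Q)\}_n$, which in turn follows from the finite $2p$-th moment assumption (indeed $\mathcal{T}_p(P_n,Q)\le 2^{p-1}(\int|\mathbf{x}|^p dP_n+\int|\mathbf{y}|^p dQ)$ and the first term has uniformly bounded second moments). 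With these remarks in place the proof is complete.
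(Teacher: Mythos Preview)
Your proof is correct and follows essentially the same route as the paper's: invoke Corollary~\ref{Theo:centrallim_p} for $\sqrt{n}(\mathcal{T}_p(P_n,Q)-E\mathcal{T}_p(P_n,Q))$, establish $\mathcal{T}_p(P_n,Q)\to\mathcal{W}_p^p(P,Q)$ a.s.\ and $E\mathcal{T}_p(P_n,Q)\to\mathcal{W}_p^p(P,Q)$ via uniform integrability from the bound $\mathcal{T}_p(P_n,Q)\le 2^{p-1}\int|\mathbf{x}|^p dP_n+2^{p-1}\int|\mathbf{y}|^p dQ$, then apply the mean value theorem and Slutsky. The only cosmetic difference is that the paper applies the mean value theorem to $t\mapsto t^p$ (writing $A_n^p-B_n^p=(A_n-B_n)\,p(\xi_n)^{p-1}$) while you apply it to $t\mapsto t^{1/p}$; the two are equivalent, and your computation of the limiting variance is correct --- indeed $\tfrac{1}{p}(\mathcal{W}_p^p(P,Q))^{1/p-1}=\tfrac{1}{p\,\mathcal{W}_p(P,Q)^{p-1}}$, so your hedging about the exponent is unnecessary (the displayed $\beta_p^2$ in the statement contains typos: $P_n$ should be $P$ and $\mathcal{W}_p^p$ should be $\mathcal{W}_p$).
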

\begin{proof}
Setting 
\begin{equation*}
A_n:=\mathcal{W}_p(P_n,Q)\ \ \text{and} \ \ B_n:=\left(E [\mathcal{W}_p^p(P_n,Q)]\right)^{\frac{1}{p}},
\end{equation*}
we know from Corollary \ref{Theo:centrallim_p} that 
\begin{equation}\label{eq:ctl_p2}
\sqrt{n}\left(A_n^p-B_n^p\right)\stackrel{w}{\longrightarrow} N(0, \sigma^2_p(P,Q)).
\end{equation}
Moreover, the bound
$$
\mathcal{W}_p^p(P_n,Q) \leq 2^{p-1}\int  |\textbf{x}|^pdP_n(\textbf{x})+2^{p-1}\int |\textbf{y}| ^pd Q(\textbf{y}),
$$
together with the assumption of finite moments of order $2p$, imply that $\mathcal{W}_p^p(P_n,Q) $ is uniformly integrable.  It follows that 
\begin{equation}\label{limitsAn}
 \text{$A_n\xrightarrow{a.s.}\mathcal{W}_p(P,Q)$,  and $B_n\rightarrow \mathcal{W}_p(P,Q)$}.
\end{equation}
By the mean value theorem applied to the function $t\mapsto t^p$,  there exists $\varepsilon_n\in (0,1)$ such that 
\begin{equation}\label{ValueMed}
A_n^p-B_n^p=(A_n-B_n)p(A_n\varepsilon_n+B_n(1-\varepsilon_n))^{p-1}.
\end{equation}
The limits of  \eqref{limitsAn}  imply that necessarily $p(A_n\varepsilon_n+B_n(1-\varepsilon_n))^{p-1}\xrightarrow{a.s.} p \mathcal{W}_{p}(P,Q)^{p-1}>0$.  This fact, together with the limit \eqref{eq:ctl_p2} and  Slutsky's theorem applied in \eqref{ValueMed} conclude the proof.
\end{proof}

\subsection{Two-sample case}
For $n,m\in \N$ let $X_1,\dots, X_n $ and $Y_1,\dots, Y_m $ be independent i.i.d. random samples with distributions $P$ and $Q$. Consider the correspondent empirical measures $P_n:=\frac{1}{n}\sum_{k=1}^{n}\delta_{X_k}$ and $ Q_m:=\frac{1}{m}\sum_{k=1}^{m}\delta_{Y_k}$. 
At first sight one may conjecture that the approach leading to  Theorems \ref{Theorem_vaiance_bound_general_sin_delta} and 
\ref{Theorem_vaiance_bound_general} trivially extends to the two-sample setup, yielding a CLT for $\mathcal{T}_c(P_n,Q_m)$. However, a closer look at the
proof shows that major issues appear when extending Claim 3. For this reason an adaptation of Theorem~\ref{Theorem_vaiance_bound_general_sin_delta} to the two-sample setup is left for further work. On the other hand, under stronger moment assumptions, such as \eqref{eq:tight_cond2_delta}, the extension is straightforward. We present the result avoiding additional details.
\begin{Theorem}\label{Theo:centrallim2}
Assume $c(\mathbf{x},\mathbf{y})=h(\mathbf{x}-\mathbf{y})$ with $h$ differentiable and satisfying (A1)-(A3). Let $P, Q\in\mathcal{P}(\mathbb{R}^d)$ be such that $P\ll \ell_d$, $Q\ll\ell_d$ and both have connected support and negligible boundary. Assume that \eqref{assumptio:c2} holds and also that there exists some $\delta>0$ such that \eqref{eq:tight_cond2_delta} holds, as well as the corresponding conditions exchanging the roles of $P$ and $Q$. Then, if $\frac{nm}{n+m} \rightarrow \lambda \in (0,1)$ as $n,m\rightarrow \infty$,  
\begin{align*}
\sqrt{\textstyle \frac{nm}{n+m}}\left(\mathcal{T}(P_n, Q_m)-E\mathcal{T}(P_n, Q_m)\right)\stackrel{w}{\longrightarrow} N\left(0,(1-\lambda)\sigma^2_c(P,Q)+\lambda\sigma^2_c(Q,P)\right),
\end{align*}
with $\sigma^2_c(Q,P)$ as in \eqref{sigma_def}. Furthermore, 
$$n\text{\em Var} (\mathcal{T}(P_n, Q_m))\to (1-\lambda)\sigma^2_c(P,Q)+\lambda\sigma^2_c(Q,P).$$
\end{Theorem}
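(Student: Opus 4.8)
The plan is to reduce Theorem~\ref{Theo:centrallim2} to the one-sample machinery behind Theorem~\ref{Theorem_vaiance_bound_general} by linearising on both marginals at once. Let $(\varphi,\psi)$, with $\psi=\varphi^c$, be an optimal dual pair for $(P,Q)$; then $\psi$ is an optimal transport potential from $Q$ to $P$, and by Corollary~\ref{cor:uniqueness} (applied to $P$ and, with the roles of $P$ and $Q$ exchanged, to $Q$) the pair is determined up to additive constants. Since \eqref{assumptio:c2} is symmetric in $P$ and $Q$, it gives $\varphi\in L_2(P)$ and $\psi\in L_2(Q)$ exactly as in the proof of Theorem~\ref{Theorem_vaiance_bound_general_sin_delta}. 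Setting
\[
R_{n,m}:=\mathcal{T}_c(P_n,Q_m)-\int\varphi\,dP_n-\int\psi\,dQ_m\qquad(\ge 0\text{ by dual feasibility of }(\varphi,\psi)),
\]
and using $E\mathcal{T}_c(P_n,Q_m)=\int\varphi\,dP+\int\psi\,dQ+ER_{n,m}$, we obtain the decomposition
\[
\mathcal{T}_c(P_n,Q_m)-E\mathcal{T}_c(P_n,Q_m)=\int\varphi\,d(P_n-P)+\int\psi\,d(Q_m-Q)+\bigl(R_{n,m}-ER_{n,m}\bigr).
\]
It then suffices to prove (i) a CLT for the first two (linear) terms and (ii) $\sqrt{\tfrac{nm}{n+m}}\,(R_{n,m}-ER_{n,m})\to 0$ in probability; Slutsky's theorem gives the weak limit, and expanding $\text{Var}(\mathcal{T}_c(P_n,Q_m))$ along the same decomposition (the cross term being controlled by Cauchy--Schwarz) yields the variance statement.

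For (i), the two samples being independent, $\int\varphi\,d(P_n-P)$ and $\int\psi\,d(Q_m-Q)$ are independent; the classical CLT in $\R$ gives $\sqrt n\int\varphi\,d(P_n-P)\stackrel{w}{\longrightarrow}N(0,\sigma^2_c(P,Q))$ and $\sqrt m\int\psi\,d(Q_m-Q)\stackrel{w}{\longrightarrow}N(0,\sigma^2_c(Q,P))$. Writing $\sqrt{\tfrac{nm}{n+m}}=\sqrt{\tfrac{m}{n+m}}\,\sqrt n=\sqrt{\tfrac{n}{n+m}}\,\sqrt m$ and using the balance condition $\tfrac{n}{n+m}\to\lambda$ (hence $\tfrac{m}{n+m}\to 1-\lambda$), Slutsky's theorem together with independence shows that $\sqrt{\tfrac{nm}{n+m}}$ times the sum of the two linear terms converges weakly to $N\bigl(0,(1-\lambda)\sigma^2_c(P,Q)+\lambda\sigma^2_c(Q,P)\bigr)$; the same identity gives $\tfrac{nm}{n+m}\text{Var}$ of the linear part $\to (1-\lambda)\sigma^2_c(P,Q)+\lambda\sigma^2_c(Q,P)$.

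For (ii), I would apply the Efron--Stein inequality to $R_{n,m}$, regarded as a function of the $n+m$ independent coordinates $X_1,\dots,X_n,Y_1,\dots,Y_m$, which bounds $\text{Var}(R_{n,m})$ by $\tfrac12\sum_{i=1}^n E(R_{n,m}-R_{n,m}^{X_i'})^2+\tfrac12\sum_{j=1}^m E(R_{n,m}-R_{n,m}^{Y_j'})^2$, the superscript indicating replacement of one coordinate by an independent copy. For an $X$-term, relabelling an optimal coupling of $(P_n,Q_m)$ by transferring the mass sitting over $X_i$ onto $X_i'$ (the plan analogue of the argument in Lemma~\ref{Lemma:var_bound}, via the convexity bound \eqref{eq:cota2}) controls $|\mathcal{T}_c(P_n,Q_m)-\mathcal{T}_c(P_n^{X_i'},Q_m)|$ by an expression of order $\tfrac1n$ in $|X_i-X_i'|$ and $|\nabla h(X_i-\cdot)|$; together with the $\tfrac1n|\varphi(X_i)-\varphi(X_i')|$ correction and H\"older's inequality, this makes $n^2E(R_{n,m}-R_{n,m}^{X_i'})^2$ finite under \eqref{eq:tight_cond2_delta}, with slack supplied by the exponent $2+\delta$. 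Moreover $n\,(R_{n,m}-R_{n,m}^{X_i'})\to 0$ a.s.: using dual feasibility of the optimal pairs for $(P_n,Q_m)$ and $(P_n^{X_i'},Q_m)$ one sandwiches this quantity between $[\varphi_{n,m}(X_i)-\varphi(X_i)]-[\varphi_{n,m}(X_i')-\varphi(X_i')]$ and the analogous expression with $\varphi_{n,m}$ replaced by $\varphi_{n,m}^{X_i'}$, where $\varphi_{n,m}$ is the optimal potential on the $P_n$-side for $(P_n,Q_m)$; since $P\ll\ell_d$ has connected support with negligible boundary, $X_i,X_i'\in\text{Supp}(P)$ a.s., and Theorem~\ref{Theo:main2}, applied with the roles of the marginals interchanged ($P$ in the place of ``$Q$'' there, with $P_n\to P$, $P_n^{X_i'}\to P$ and $Q_m\to Q$), gives $\varphi_{n,m}\to\varphi$ and $\varphi_{n,m}^{X_i'}\to\varphi$ uniformly on compact subsets of $\text{Supp}(P)$, the normalising constants cancelling in the differences. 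The uniform (in $i$, by exchangeability) $L^2$ bound plus the $(2+\delta)$-integrability then give uniform integrability of $\{n^2(R_{n,m}-R_{n,m}^{X_i'})^2\}$, hence $n^2E(R_{n,m}-R_{n,m}^{X_i'})^2\to 0$. Since $\tfrac{nm}{n+m}\le n$, the $X$-block of the Efron--Stein bound is $\le\tfrac12\,n^2E(R_{n,m}-R_{n,m}^{X_1'})^2\to 0$, and the $Y$-block is handled symmetrically using $\tfrac{nm}{n+m}\le m$ and the version of \eqref{eq:tight_cond2_delta} with $P$ and $Q$ exchanged. Thus $\tfrac{nm}{n+m}\text{Var}(R_{n,m})\to 0$, which is (ii), and then $\tfrac{nm}{n+m}\text{Var}(\mathcal{T}_c(P_n,Q_m))\to(1-\lambda)\sigma^2_c(P,Q)+\lambda\sigma^2_c(Q,P)$.

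The main obstacle is the second half of (ii): upgrading the uniform bound $n^2E(R_{n,m}-R_{n,m}^{X_i'})^2\le M$ to convergence to $0$. In the one-sample Theorem~\ref{Theorem_vaiance_bound_general_sin_delta} this step was carried out through a Banach--Saks/Ces\`aro-means argument, which does not transfer to the two-sample setting because the two families of perturbations interfere; this is precisely why the stronger hypothesis \eqref{eq:tight_cond2_delta} is assumed here, as it directly supplies the uniform integrability needed to turn the a.s.\ convergence $n(R_{n,m}-R_{n,m}^{X_i'})\to 0$ --- itself a consequence of the stability result Theorem~\ref{Theo:main2} --- into convergence of second moments. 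A secondary, routine point is to verify, almost surely and along all the perturbed sequences, the hypotheses of Theorem~\ref{Theo:main2}: weak convergence of the empirical marginals (clear), finiteness of $\mathcal{T}_c(P_n^{X_i'},Q_m)$ (immediate from the a.s.\ finite moments), and finiteness of $\mathcal{T}_c(Q,P)$ (which follows from \eqref{assumptio:c2} and the convexity of $h$, since $h(\mathbf{x}-\mathbf{y})\le\tfrac12 h(2\mathbf{x})+\tfrac12 h(-2\mathbf{y})$).
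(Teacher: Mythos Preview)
Your proposal is correct and follows essentially the same approach as the paper: define the same remainder $R_{n,m}$, reduce to $\frac{nm}{n+m}\,\mathrm{Var}(R_{n,m})\to 0$ via Efron--Stein over all $n+m$ coordinates, obtain the a.s.\ vanishing of $n(R_{n,m}-R'_{n,m})_+$ from the potential stability in Theorem~\ref{Theo:main2}, and upgrade to $L^2$ convergence through the uniform integrability supplied by \eqref{eq:tight_cond2_delta} exactly as in Theorem~\ref{Theorem_vaiance_bound_general}. Your additional remarks---using a plan rather than a map when working with the discrete $Q_m$, swapping the roles of $P$ and $Q$ when invoking Theorem~\ref{Theo:main2}, and explaining why the Ces\`aro argument of Theorem~\ref{Theorem_vaiance_bound_general_sin_delta} does not carry over---are all accurate and match the paper's (terser) reasoning.
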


\section{Appendix} \label{s:append}
\subsection{Proofs of main results}

%\phantom{\color{white} aaaaaaaaaaaaaaaaaaaaaaaaaaaaaaaaaaaaaaaaaaaaaaaaa} \linebreak

\textsc{Proof of Theorem~\ref{Theo:main2}.} We prove each claim separately.
To prove \textit{(i)} we take, without loss of generality, $\mathbf{p}_0\in \text{supp}\cap \text{dom}(\nabla \psi)$  as in \eqref{potential.fixing} (hence, $\psi_{n}(\mathbf{p}_0)=0$). From \eqref{eq:cota2} and Lemma \ref{Lem:technic3} we see that for each compact $K\subset \text{supp}(Q)$ there exists a subsequence $\psi_{n_k}$ and a constant $R=R(K)>0$ such that for $\mathbf{a},\mathbf{x}\in K$
\begin{align*}
|\psi_{n_k}(\mathbf{x})-\psi_{n_k}(\mathbf{a})|\leq | \mathbf{x}-\mathbf{a} |R.
\end{align*}
Hence, the functions of the sequence $\{ \psi_{n_k} \}$ are $R$-Lipschitz on each compact set and $\psi_{n_k}(\mathbf{p}_0)=0$ and we can apply Arzel\`a-Ascoli theorem in each compact set to conclude that there exits a continuous function $f$ such that $\psi_{n_{k_m}}\rightarrow f$ uniformly on the compact sets of $\text{supp}(Q)$ for some subsequence.\\
 We claim that $ f=\psi +C$. To prove it we consider $\mathbf{x}\in\text{supp}(Q)$ and any sequence $\mathbf{y}_{n}\in \partial^c\psi_{n}(\mathbf{x})$, by Lemma~\ref{Lem:technic3} we know that there exist a sub-sequence $\{\mathbf{y}_{n_k}\}_{k\in \N}$ which is bounded. Hence, by Lemma~\ref{Teo:main1}, there exists $\mathbf{y}\in \partial\psi(\mathbf{x})$ such that $\mathbf{y}_{n_k}\rightarrow\mathbf{y}\in \partial^c \psi(\mathbf{x})$ along a subsequence. We keep the notation for this sub-sequence and note that it satisfies $$\psi_{n_k}(\mathbf{z})\leq \psi_{n_k}(\mathbf{x})+[c(\mathbf{z},\mathbf{y}_{n_k})-c(\mathbf{x},\mathbf{y}_{n_k})] \ \ \text{for all}\ \ \mathbf{z}\in\R^d,$$ and by taking limits, 
 $$f(\mathbf{z})\leq f(\mathbf{x})+[c(\mathbf{z},\mathbf{y})-c(\mathbf{x},\mathbf{y})] \ \ \text{for all}\ \ \mathbf{z}\in \text{dom}(f).$$
Therefore, $\partial^c f(\mathbf{x})$ is non-empty for every $\mathbf{x}\in\text{supp}(Q)$. This entails that $f$ is $c-$concave and, as a consequence, almost surely differentiable.
Moreover, $\mathbf{y}\in\partial^c f(\mathbf{x})\cap \partial^c \psi(\mathbf{x})$. We conclude that $\nabla^c f =\nabla^c \psi$ a.s. in $\text{supp}(Q)$ and \textit{(i)} follows by Corollary \ref{cor:uniqueness}.\\ \\
We turn now to \textit{(ii)} and assume, on the contrary, that there exists a sequence $\{\mathbf{x}_n\}\subset K \text{  and  } \mathbf{y}_n\in \partial^c \psi_n(\mathbf{x}_n)$ such that
\begin{align}\label{eq:there:exist_seq}
|\mathbf{y}_n-\nabla^c \psi(\mathbf{x}_n) |>\epsilon \text{  for some  } \epsilon>0 \text{  and  all } n.
\end{align}
Compactness of $K$ implies that there exists $\mathbf{x}\in K$ such that $\mathbf{x}_{n}\rightarrow \mathbf{x}$ along a subsequence, which, to ease notation, we  denote also as $\mathbf{x}_n$. Lemma~\ref{Lem:technic3} implies that $\mathbf{y}_n$ also converges to some $\mathbf{y}$ along a subsequence. But then Theorem~\ref{Teo:main1} shows that $\mathbf{y}=\nabla^c \psi(\mathbf{x})$ which contradicts \eqref{eq:there:exist_seq}.
\hfill $\Box$
}
 
\bigskip 
{
\textsc{Proof of Theorem~\ref{Theorem_vaiance_bound_general_sin_delta}.} 
We write $(X_1',\ldots,X_n')$ for an independent copy of $(X_1,\ldots,X_n)$ and denote by $P_n^{(i)}$ the empirical measure
on $(X_1, \dots,X_i' ,$ $\dots, X_n)$. As in \eqref{eq_def_R},
$$R_n=\mathcal{T}_c(P_n,Q)-\int{\varphi(\mathbf{x})}dP_n(\mathbf{x}),$$
where $\varphi$ is an optimal transport potential from $P$ to $Q$.
We write $R_n^{(i)}$ for the version of $R_n$ computed from $P_n^{(i)}$ instead of $P_n$.
To ease notation it will be convenient to write $P_n'$ rather that $P_n^{(1)}$ and $R_n'$ instead of $R_n^{(1)}$ at some points.}

The guideline of the proof is to show that $n(R_n-R'_n)\stackrel{a.s.}\longrightarrow 0$ and $n^2E(R_n-R'_n)^2\leq M$. From this we can obtain, using the Banach-Alaoglu theorem and the Banach-Saks property (see details below), that there exists a Ces\`aro mean of $\{ n|R_{n}-R'_{n}|\}_{n\in \N}$ convergent to $0$ in $L^2(\mathbb{P})$. Finally the same holds with the Ces\`aro means of the sequence $\sqrt{n}(R_n-ER_n)$. To conclude we will prove that these three claims imply the central limit theorem. We follow this path in the following complete proof, which we split into three main steps: \\
\\{
\textbf{Claim 1:} $n(R_n-R'_n)\stackrel{a.s.}\longrightarrow 0$ and $n^2E(R_n-R'_n)^2\leq M$. 

We write $\varphi_n$ for an optimal transport potential between $P_n$ and $Q$. Since 
\begin{eqnarray*}
\mathcal{T}_c(P'_n,Q)&=&\sup_{(f,g)\in \Phi_c(P,Q)}\int f(\textbf{x}) dP'_n(\textbf{x})+\int g(\textbf{y}) dQ(\textbf{y})\\
&\geq& \int \varphi_n(\textbf{x}) dP'_n(\textbf{x})+\int \varphi^c_n(\textbf{y}) dQ(\textbf{y}),
\end{eqnarray*}
then we have
\begin{align*}
R_n'\geq \frac{1}{n}\varphi_n(X'_1)+\frac{1}{n}\sum_{k=2}^{n}\varphi_n(X_1)-\frac{1}{n}\sum_{k=2}^{n}\varphi(X_k)-\frac{1}{n}\varphi(X'_1)+\int \varphi_n^c(\textbf{y}) dQ(\textbf{y}).
\end{align*}
This implies that
\begin{align}\label{eq:bound_lemma1}
R_n-R_n'\leq \frac{1}{n}\big(\varphi_n(X_1)-\varphi(X_1)-\varphi_n(X'_1)+\varphi(X'_1)\big).
\end{align}
By Theorem \ref{Theo:main2} we can assume, without loss of generality, that, almost surely, $\varphi_n\to\varphi$, uniformly on compact subsets of $\text{supp}(P)$. This entails that $n(R_n-R'_n)_+\stackrel{a.s.}\to 0$. By symmetry, 
$n(R_n'-R_n)_+\stackrel{a.s.}\to 0$ and we conclude that $n(R_n'-R_n)\stackrel{a.s.}\to 0$.

For the second part of this claim we recall that 
\begin{align*}
n(R_n-R_n')=n(\mathcal{T}_c(P_n,Q)-\mathcal{T}_c(P'_n,Q))-(\varphi(X_1)-\varphi(X_1')).
\end{align*}
It follows from \eqref{eq:tight_cond_1} and the proof of Lemma~\ref{Lemma:var_bound} that $n^2 E(\mathcal{T}_c(P_n,Q)-\mathcal{T}_c(P'_n,Q))^2$ is a bounded sequence and, therefore, it suffices to show that $E\varphi(X_1)^2<\infty$. 
To check this, we fix $\mathbf{x}_0\in \text{supp}(P)\cap\text{dom}(\nabla \varphi)$. From \eqref{eq:cota} we get that 
\begin{align*}
|\varphi(X_1)|&\leq |\varphi(\mathbf{x}_0)|+|c(X_1,\mathbf{y})-c(\mathbf{x}_0,\mathbf{y})|+|c(X_1,\mathbf{b})-c(\mathbf{x}_0,\mathbf{b})|,\\
&\leq |\varphi(\mathbf{x}_0)|+c(X_1,\mathbf{y})+c(\mathbf{x}_0,\mathbf{y})+c(X_1,\mathbf{b})+c(\mathbf{x}_0,\mathbf{b}),
\end{align*}
{for all}   $(\mathbf{x}_0,\mathbf{b}),(X_1,\mathbf{y})\in \partial^c \varphi$.
Since $\varphi$ is differentiable at $\mathbf{x}_0$ then if $X_1\in \text{dom}(\nabla \varphi)$ we have 
\begin{eqnarray*}
|\varphi(X_1)|&\leq& |\varphi(\mathbf{x}_0)|+c(X_1,\nabla^c\varphi(X_1))+c(\mathbf{x}_0,\nabla^c\varphi(X_1))\\
&&+c(X_1,\nabla^c\varphi(\mathbf{x}_0))+c(\mathbf{x}_0,\nabla^c\varphi(\mathbf{x}_0)).
\end{eqnarray*}
Recalling that $c(\mathbf{x},\mathbf{y})=h(\mathbf{x}-\mathbf{y})$ and that $h$ is convex, we see that
\begin{align*}
c(X_1,\nabla^c\varphi(X_1))=h(X_1-\nabla^c\varphi(X_1))\leq{\textstyle \frac{1}{2}} h( 2 X_1)+{\textstyle \frac{1}{2}}h(-2 \nabla^c\varphi(X_1)).
\end{align*}
Hence, using the fact that $Q=\nabla^c\varphi \# P$ and \eqref{assumptio:c2} we deduce that 
\begin{align*}
E(c(X_1,\nabla^c\varphi(X_1)))^2\leq \int h( 2 \mathbf{x})^2dP(\mathbf{x})+\int h(-2 \mathbf{y})^2dQ(\mathbf{y})<\infty.
\end{align*}
Similarly, we check that $E(c(X_1,\nabla^c\varphi(\mathbf{x}_0))^2)<\infty$ and $E(c(\mathbf{x}_0,\nabla^c\varphi(X_1))^2)<\infty$. This shows that $\varphi(X_1)$ has a finite second moment, as claimed.

}

{
\textbf{Claim 2:} From every subsequence of $\{ n|R_{n}-R'_{n}|\}_{n\in \N}$ we can extract a subsequence for which the Ces\`aro mean converges to $0$ in $L^2(\mathbb{P})$.

From Claim 1 and the Banach-Alaoglu theorem (see Theorem 3.16 in \cite{Brezis}) applied on the Hilbert space $L^2(\mathbb{P})$, we see that, along subsequences, $n|R_n-R'_n|\stackrel{L^2}\rightharpoonup 0 $, where $\stackrel{L^2}\rightharpoonup $ denotes the weak convergence in the space $L^2(\mathbb{P})$. By a theorem of Banach and Saks (see the Banach–Saks property, exercise 5.24 in \cite{Brezis}), we conclude that there exists a sub-sequence, $\{ n_k|R_{n_k}-R'_{n_k}|\}_{k\in \N}$,
such that the Ces\`aro means 
\begin{align}\label{Cesaro_sums_1}
g_m= \frac{1}{m}\sum_{k=1}^m n_k|R_{n_k}-R'_{n_k}|
\end{align}
converge strongly to $0$ in $L^2(\mathbb{P})$, that is,
\begin{align}\label{Cesaro_sums_strong}
E\Big( \frac{1}{m}\sum_{k=1}^m n_k|R_{n_k}-R'_{n_k}|\Big)^2\longrightarrow 0.
\end{align}
\textbf{Claim 3:} From every subsequence of $\sqrt{n}(R_n-ER_n)$ we can extract a further subsequence for which the Ces\`aro mean converges to $0$ in $L^2(\mathbb{P})$.

There exists a Ces\`aro mean of $\sqrt{n}(R_n-ER_n)$ convergent to $0$ in $L^2(\mathbb{P})$.
\\For ease of notation, we write  $k$ instead of $n_k$ in \eqref{Cesaro_sums_1}. We set  $G_m:=\frac{1}{m}\sum_{k=1}^m \sqrt{k}R_k$. By the Efron-Stein inequality  
\begin{align}\label{efron_st_reduced}
 \text{Var}(G_m)\leq  \frac{1}{2}\sum_{i=1}^m E(G_m-G_m^{(i)})^2.
\end{align}
Next, we observe that
\begin{align*}
E(G_m-G_m^{(i)})^2&=E\Big(\frac{1}{m}\sum_{k=1}^m \sqrt{k}(R_k-R_k^{(i)})\Big)^2\\
&=\frac{1}{m^2}\sum_{k=1}^m k E\Big( R_k-R_k^{(i)}\Big)^2\\
&+\frac{2}{m^2}\sum_{k=1}^m \sum_{j=k+1}^m\sqrt{k}\sqrt{j}E( R_k-R_k^{(i)}) (R_j-R_j^{(i)}),
\end{align*}
since for the terms with $k < i$ the difference is is $0$. Hence
\begin{align*}
E(G_m-G_m^{(i)})^2&=\frac{1}{m^2}\sum_{k=i}^m k E\left( R_k-R_k^{(i)}\right)^2\\
&+\frac{2}{m^2}\sum_{k=i}^m \sum_{j=k+1}^m \sqrt{k}\sqrt{j}E( R_k-R_k^{(i)}) (R_j-R_j^{(i)})\\
&=\frac{1}{m^2}\sum_{k=i}^m k E\left( R_k-R'_k\right)^2\\
&+\frac{2}{m^2}\sum_{k=i}^m \sum_{j=k+1}^m \sqrt{k}\sqrt{j}E( R_k-R'_k) (R_j-R'_j).
\end{align*}
Here, the second equality comes from the fact that $\left( R_k-R'_k\right)^2$ has the same distribution as $( R_k-R_k^{(i)})^2$ when $i\leq k$, and the same happens with $(R_k-R'_k) (R_j-R'_j)$ and $( R_k-R_k^{(i)}) (R_j-R_j^{(i)})$. Now turning back to \eqref{efron_st_reduced} we have
\begin{eqnarray*}
 \text{Var}(G_m)&\leq&  \frac{1}{2}\frac{1}{m^2}\sum_{i=1}^m\sum_{k=i}^m k E\left( R_k-R'_k\right)^2\\&&+\frac{2}{m^2}\frac{1}{2}\sum_{i=1}^m\sum_{k=i}^m \sum_{j=k+1}^m \sqrt{k}\sqrt{j}E( R_k-R'_k) (R_j-R'_j) \\
 &\leq &  \frac{1}{2}\frac{1}{m^2}\sum_{i=1}^m\sum_{k=i}^m k E\left( R_k-R'_k\right)^2\\
 &&+\frac{1}{m^2}\sum_{i=1}^m\sum_{k=i}^m \sum_{j=k+1}^m \sqrt{k}\sqrt{j} E|( R_k-R'_k)|| (R_j-R'_j)|\\
 &=& \frac{1}{2}\frac{1}{m^2}\sum_{k=1}^m k^2 E\left( R_k-R'_k\right)^2\\
 &&+\frac{1}{m^2}\sum_{i=1}^m\sum_{k=i}^m \sum_{j=k+1}^m \sqrt{k}\sqrt{j} E|( R_k-R'_k)|| (R_j-R'_j)|.
\end{eqnarray*}
Compute the last term to obtain
\begin{eqnarray*}
\lefteqn{\sum_{i=1}^m\sum_{k=i}^m \sum_{j=k+1}^m \sqrt{k}\sqrt{j} E|( R_k-R'_k)|| (R_j-R'_j)|}\hspace*{2cm}\\
&=&\sum_{j=1}^m\sum_{k=1}^{j-1}\sum_{i=1}^{k}  \sqrt{k}\sqrt{j} E|( R_k-R'_k)|| (R_j-R'_j)|\\
&=&\sum_{j=1}^m\sum_{k=1}^{j-1}\sqrt{k}\sqrt{j} E|( R_k-R'_k)|| (R_j-R'_j)|k\\
&\leq&\sum_{j=1}^m\sum_{k=1}^{j-1}kj E|( R_k-R'_k)|| (R_j-R'_j)|.
\end{eqnarray*} 
We conclude that
\begin{align*}
 \text{Var}(G_m)&\leq \frac{1}{2}\frac{1}{m^2}\sum_{k=1}^m k^2 E\left( R_k-R'_k\right)^2+\sum_{j=1}^m\sum_{k=1}^{j-1}kj E|( R_k-R'_k)|| (R_j-R'_j)|\\
 &= \frac{1}{2} E\Big(\frac{1}{m} \sum_{k=1}^m k|R_k-R'_k|\Big)^2,
\end{align*}
which, together with \eqref{Cesaro_sums_strong}, shows that
\begin{align}\label{cesaro_sums_2}
E\Big(\frac{1}{m}\sum_{k=1}^m \sqrt{k}(R_k-ER_k)\Big)^2=\text{Var}(G_m)\longrightarrow 0.
\end{align}
Finally  we have proven that for every subsequence of $\{ G_m\}_{m \in \N}$ we can find a further subsequence converging to $0$ strongly in $L^2(\mathbb{P})$, and Claim 3 follows.

Now we are ready to prove the central limit theorem. Note that by the Central Limit Theorem we have
\begin{align*}
\int{\varphi(\mathbf{x})}dP_n(\mathbf{x})-E\left(\int{\varphi(\mathbf{x})}dP_n(\mathbf{x})\right)\stackrel{w}\longrightarrow N(0, \sigma^2_c(P,Q)).
\end{align*}
As a consequence, the Ces\`aro means converge to the same limit,
\begin{align}\label{ctl_class}
\frac{1}{m}\sum_{k=1}^{m}\Big\lbrace\int{\varphi(\mathbf{x})}dP_k(\mathbf{x})-E\Big(\int{\varphi(\mathbf{x})}dP_k(\mathbf{x})\Big)\Big\rbrace\stackrel{w}\longrightarrow N(0, \sigma^2_c(P,Q)).
\end{align}
Both \eqref{ctl_class} and \eqref{cesaro_sums_2} imply that 
\begin{align}\label{ctl_cesaro}
\frac{1}{m}\sum_{k=1}^{m}\sqrt{k}\left\lbrace\mathcal{T}_c(P_k,Q)-E\mathcal{T}_c(P_k,Q)\right\rbrace\stackrel{w}\longrightarrow N(0, \sigma^2_c(P,Q)).
\end{align}
The variance bound of Lemma~\ref{Lemma:var_bound} and Remark~\ref{remark_var} yield tightness of $\sqrt{n}\{ \mathcal{T}_c(P_n, $ $Q)-E\mathcal{T}_c(P_n,Q)\}_{n\in \N}$. Hence each sub-sequence has a convergent sub-sequence to some limiting distributions, say $\gamma$. The Ces\`aro means must converge also to $\gamma$. Finally, from \eqref{ctl_cesaro} we conclude that $\gamma= N(0, \sigma^2_c(P,Q))$ and the proof follows.

\hfill $\Box$}

\bigskip 
{
\textsc{Proof of Theorem~\ref{Theorem_vaiance_bound_general}.} 
We keep the same notations as in the proof of Theorem~\ref{Theorem_vaiance_bound_general_sin_delta}, noting that the new assumption \eqref{eq:tight_cond2_delta} has no influence on the proof of Claim 1. Hence, we only have to prove that $n^2(R_n-R_n')_+^2$ is uniformly integrable and, in fact, recalling that
\begin{align*}
n(R_n-R_n')=n(\mathcal{T}_c(P_n,Q)-\mathcal{T}_c(P'_n,Q))-(\varphi(X_1)-\varphi(X_1'))
\end{align*}
and that $\varphi(X_1)$ has a finite second moment (as shown in the proof of Theorem~\ref{Theorem_vaiance_bound_general_sin_delta}), it suffices to prove uniform integrability of $n(\mathcal{T}_c(P_n,Q)-\mathcal{T}_c(P'_n,Q))$.

To check this we denote $Z:=\mathcal{T}_c(P_n,Q)$ and $Z':=\mathcal{T}_c(P'_n,Q) $. Arguing as in the proof of Lemma \ref{Lemma:var_bound} we see that 
\begin{align*}
(Z-Z')_+\leq | X_1-X'_1| \int_{C'_1}  |\nabla h(X_1-\textbf{y})|d Q(\textbf{y}).
\end{align*}
Hence, by H\"older's inequality, for every pair $(q_1,q_2)\in\alpha$ it holds that
\begin{eqnarray}\nonumber
\lefteqn{E(n(Z-Z')_+)^{2+\delta}\leq E\Big\lbrace| X_1-X'_1|^{2+\delta} \Big(\int_{C'_1}  n|\nabla h(X_1-\textbf{y})|d Q(\textbf{y})\Big)^{2+\delta}\Big\rbrace}\hspace*{1cm}\\
\nonumber%\label{eq:holder1}
&&\leq \left(E| X_1-X'_1|^{(2+\delta)q_1}\right)^{\frac{1}{q_1}}\Big( E \Big(\int_{C'_1} n |\nabla h(X_1-\textbf{y})|d Q(\textbf{y})\Big)^{(2+\delta)q_2}\Big)^{\frac{1}{q_2}}.
\end{eqnarray}
A further use of H\"older's inequality yields that
\begin{eqnarray*}
\lefteqn{\textstyle \int_{C'_1}  |\nabla h(X_1-\textbf{y})|d Q(\textbf{y})}\hspace*{1cm}\\
&\leq  &\textstyle  \left(\int_{C'_1} d Q(\textbf{y})\right)^{\frac{(2+\delta)q_2-1}{(2+\delta)q_2}}\left(\int_{C'_1}  |\nabla h(X_1-\textbf{y})|^{(2+\delta)q_2} d Q(\textbf{y})\right)^{\frac{1}{(2+\delta)q_2}}
\\
&=&{\textstyle \frac{1}{n^{\frac{(2+\delta)q_2-1}{(2+\delta)q_2}}}}\Big({\textstyle\int_{C'_1}}  |\nabla h(X_1-\textbf{y})|^{q_2(2+\delta)} d Q(\textbf{y})\Big)^{\frac{1}{(2+\delta)q_2}}
\end{eqnarray*}
Note that $(X_1', \dots, X_n)$ is independent of $X_1$, hence, the same holds for $C'_k$, for $k=1,\dots, n$. By exchangeability, we have that $ \int_{C'_1}  |\nabla h(X_1-\textbf{y})|^{(2+\delta)q_2} d Q(\textbf{y})$ is equally distributed as $ \int_{C'_k}  |\nabla h(X_1-\textbf{y})|^{(2+\delta)q_2} d Q(\textbf{y})$, $k=2, \dots, n$. This implies 
\begin{align*}\textstyle
E\left\lbrace \int_{C'_1}  |\nabla h(X_1-\textbf{y})|^{(2+\delta)q_2} d Q(\textbf{y})\right\rbrace&= \textstyle\frac{1}{n}E\left\lbrace \sum_{i=1}^n \int_{C'_i}  |\nabla h(X_1-\textbf{y})|^{(2+\delta)q_2} d Q(\textbf{y})\right\rbrace \\
&\textstyle\leq { \frac{1}{n}}E\left\lbrace \int_{\R^d}  |\nabla h(X_1-\textbf{y})|^{(2+\delta)q_2} d Q(\textbf{y})\right\rbrace,
\end{align*}
which, in turn, entails
\begin{align*}
\textstyle E\left( \int_{C'_1}  |\nabla h(X_1-\textbf{y})| d Q(\textbf{y})\right)^{(2+\delta)q_2}\leq \frac{1}{n^{(2+\delta)q_2}}E\left(\int_{\R^d}  |\nabla h(X_1-\textbf{y})|^{(2+\delta)q_2} d Q(\textbf{y})\right).
\end{align*}
Combining the last estimates, we can see that
\begin{align*}\textstyle
E(n(Z-Z')_+)^{(2+\delta)} \leq  \left(E| X_1-X'_1|^{(2+\delta)q_1}\right)^{\frac{1}{q_1}} \left(E\left(\int_{\R^d}  |\nabla h(X_1-\textbf{y})|^{(2+\delta)q_2} d Q(\textbf{y})\right)\right)^{\frac{1}{q_2}}
\end{align*}
and the proof follows.

\hfill $\Box$
}

\bigskip 
{
\textsc{Proof of Theorem~\ref{Theo:centrallim2}.} We set
$$R_{n,m}:=\mathcal{T}_c(P_n,Q_m)-\int{\varphi(\mathbf{x})}dP_n(\mathbf{x})-\int{\psi(\mathbf{y})}dQ_m(\mathbf{y})$$
with $\varphi$ an optimal transport potential from $P$ to $Q$ for the cost $c$ and $\psi=\varphi^c$
and observe that it suffices to show that $\frac{nm}{n+m}\text{Var}(R_{n,m})\to 0$.
Once again the key of the proof is  Efron-Stein's inequality. Note that $R_{n,m}$ as a function of $X_1,\dots, X_n, Y_1,\dots, Y_m$ is symmetric in its $n$ first variables as well as in the last $m$. Let $X_1'$(resp. $Y_1'$) be a copy of $X_1$ (resp. $Y_1$) both independent of $X_1,\dots, X_n, Y_1,\dots, Y_m$, finally let $P_n'$ (resp. $Q'_n$) be the empirical distribution of $X'_1,X_2\dots, X_n$ (resp. $Y'_1,Y_2\dots, Y_m$). Hence, if we denote\begin{align*}
R'_{n,m}&:=\mathcal{T}_c(P'_n,Q_m)-\int{\varphi(\mathbf{x})}dP'_n(\mathbf{x})-\int{\psi(\mathbf{y})}dQ_m(\mathbf{y}), \\
R''_{n,m}&:=\mathcal{T}_c(P_n,Q'_m)-\int{\varphi(\mathbf{x})}dP_n(\mathbf{x})-\int{\psi(\mathbf{y})}dQ'_m(\mathbf{y}),
\end{align*} 
by the Efron-Stein inequality we have
\begin{eqnarray*}
\frac{nm}{n+m}\text{Var}(R_{n,m})\leq \frac{n^2m }{n+m} E(R_{n,m}-R'_{n,m})_+^2 + \frac{ n m^2}{n+m} E(R_{n,m}-R''_{n,m})_+^2 .
\end{eqnarray*}
Now, to conclude, it suffices to prove that 
\begin{eqnarray}
\label{eq:asymptotic_double1}
& n^2E((R_{n,m}-R'_{n,m})_+^2)\to  0 \quad \text{and}\\ 
& m^2E((R_{n,m}-R''_{n,m})_+^2)\to  0.
\end{eqnarray}
We handle \eqref{eq:asymptotic_double1}, which will follow if we prove that $n(R_{n,m}-R'_{n,m})_+\to 0$ \ a.s. and also that 
$n^2(R_{n,m}-R'_{n,m})^2$ is uniformly integrable. For the first claim note note that if $\varphi_n$ (resp. $\psi_n$) is an optimal transport potential from $P_n$ to $Q_m$ (resp. from $Q_m$ to $P_n$) then
\begin{eqnarray*}
\lefteqn{R'_{n,m} }\hspace*{0cm} \\
&\geq & \textstyle \int{\varphi_n(\mathbf{x})}dP'_n(\mathbf{x})+\int{\psi_m(\mathbf{y})}dQ_m(\mathbf{y})-\int{\varphi(\mathbf{x})}dP'_n(\mathbf{x})-\int{\psi(\mathbf{y})}dQ_m(\mathbf{y}).
\end{eqnarray*}
As a consequence,
\begin{align*}
R_{n,m}-R'_{n,m}&\leq \int_{\R^d}{\left(\varphi_n(\mathbf{x})-\varphi(\mathbf{x})\right)}\left(dP_n(\mathbf{x})-dP'_n(\mathbf{x})\right)\\
&=\frac{1}{n}\left(\varphi_n(X_1)-\varphi(X_1)-\varphi_n(X'_1)+\varphi(X'_1)\right)
\end{align*}
and we see that 
\begin{align}\label{eq:inequality_Rmn}
n\left( R_{n,m}-R'_{n,m}\right)_+\leq |\varphi_n(X_1)-\varphi(X_1)-\varphi_n(X'_1)+\varphi(X'_1)|.
\end{align}
By Theorem~\ref{Theo:main2}, with a right choice of potentials we can guarantee that, $P-$a.s., $\varphi_n\to\varphi$
and conclude that $n\left( R_{n,m}-R'_{n,m}\right)_+\to 0$ $P-$a.s. \\
Finally, it only remains to prove that $n^2E\left( R_{n,m}-R'_{n,m}\right)_+^2$ is uniformly bounded, which follows arguing as in the proof of Theorem~\ref{Theorem_vaiance_bound_general}.

\hfill $\Box$
}

\subsection{Proofs of Lemmas} 

%\phantom{\color{white} aaaaaaaaaaaaaaaaaaaaaaaaaaaaaaaaaaaaaaaaaaaaaaaaa} \linebreak

{
\textsc{Proof of Lemma~\ref{Teo:main1}.}
Set $\mathbf{x}_0\in  \text{dom}(\nabla^c\psi)\cap \mbox{Supp}(Q)$ and {\color{blue} $\mathbf{y}_0=\nabla^c\psi(\mathbf{x}_0)$}. By Lemma~\ref{lemma:cont_c} we see that for each $\epsilon>0$ there exists some $\delta>0$ such that if $|\mathbf{z}-\mathbf{x}_0|<\delta$ then $\partial^c\psi(\mathbf{z})\subset B(\mathbf{y}_0, \epsilon)$. 
Let $\pi$ be the unique optimal transport plan between $Q$ and $P$. By Theorem~\ref{Theo:GaMc} $\text{supp}(\pi)\subset \partial^c\psi$.  This entails
\begin{align*}
\pi\left( B(\mathbf{x}_0, \delta)\times B(\mathbf{y}_0, \epsilon)\right)=\pi\left( B(\mathbf{x}_0, \delta)\times \R^d\right)=Q(B(\mathbf{x}_0, \delta))=\eta>0,
\end{align*}
where the inequality comes from the assumption $Q\ll\ell_d$. Repeating the argument with a decreasing sequence $\epsilon_k \rightarrow 0$, we obtain a sequence $\delta_k\leq \frac{1}{k}$ such that 
\begin{eqnarray*}
\pi\left( B(\mathbf{x}_0, \delta_k)\times B(\mathbf{y}_0, \epsilon_k)\right)&=&\pi\left( B(\mathbf{x}_0, \delta_k)\times \R^d\right)\\&=&Q(B(\mathbf{x}_0, \delta_k))>\eta_k>0.
\end{eqnarray*}
Let $\pi_n$ be an optimal transport plan between $P_n$ and $Q_n$. We observe that 
\begin{enumerate}
\item[(a)] $\pi_n\stackrel{w}{\rightarrow} \pi$ by Theorem 5.20 in \cite{Vi},
\item[(b)] $\text{supp}(\pi_n)\subset \partial^c\psi_n$ by Theorem~\ref{Theo:GaMc}.
\end{enumerate}

By (a) there exists $N_{k}$ such that, for $n\geq N_k$, $\pi_n(B(\mathbf{x}_0, \delta_k)\times B(\mathbf{y}_0, \epsilon_k))\geq \eta_k/2$. Hence, by (b) we can choose a pair 
\begin{align}\label{eq:extract_sub}
(\mathbf{x}_{n_k}, \mathbf{y}_{n_k})\in \partial^c\psi_n \cap \left( B(\mathbf{x}_0, \delta_k)\times B(\mathbf{y}_0, \epsilon_k)\right).
\end{align}
\\
As a consequence of \eqref{eq:extract_sub}, since $\epsilon_k,\delta_k\rightarrow 0$, we can extract a sub-sequence of $(\mathbf{x}_n,\mathbf{y}_n)\in \partial^c \psi_n$ converging to $(\mathbf{x}_0,\mathbf{y}_0)$. Define $a_n:=\psi_n(\mathbf{x}_n)-\psi(\mathbf{x}_0)$ and $\tilde{\psi}_n:=\psi_n-a_n$ (which has the same $c$-superdifferential as $\psi_n$). Now, \eqref{eq:extract_sub} implies that $\partial^c \tilde{\psi}_n$ are $c$-cyclically monotone sets which do not escape to the horizon. By Theorem~\ref{Teo:Horizon} and Lemma~\ref{lemma:convergence_c} we deduce that $\partial^c \tilde{\psi}_n$ converges to a $c$-cyclically monotone set $\Gamma$ along a sub-sequence.  Necessarily $\Gamma\subset \partial^c f$ for some $c$-concave function $f$. We observe that $(\mathbf{x}_0,\mathbf{y}_0)\in \partial^c f$. If we take another arbitrary point $\mathbf{x}\in \text{dom}(\nabla^c\psi) $ and $\partial^c \psi (\mathbf{x})=\{ \mathbf{y} \}$, we can apply the same arguments to check that  $(\mathbf{x},\mathbf{y})\in \partial^c f$. Hence, $\text{dom}(\nabla^c\psi) \subset \text{dom}(f)$. Since $f$ is differentiable a.s then $\partial^c f$ is a singleton a.s and, therefore, that $\nabla f =\nabla \psi$ a.s. in the support of $Q$, which is connected. Using Theorem \ref{theo:iqual} we conclude that there exists a constant $C$ such that $\psi=f-C$ in $\Omega$. Hence $\partial^c\psi =\partial^c f$ and the result follows.  \hfill $\Box$}

\bigskip
{
\textsc{Proof of Lemma~\ref{Lem:Boundness}.} We can assume, without loss of generality, that $\mathbf{p}$ is in the interior of the domain of $\psi$, since otherwise the result is trivial. With this assumption, we check first that we cannot have $\psi_n(\mathbf{p}_n)\to\infty$. In fact, in that case, by $c$-concavity we would have
$$\psi_n(\mathbf{p}_n)\leq c(\mathbf{p}_n,\mathbf{y})-\psi_n^c(\mathbf{y})$$
for all $\mathbf{y}$. Hence, we would have $\psi_n^c(\mathbf{y_n})\to -\infty$ for all $y_n\to \mathbf{y}$. Now, take $\mathbf{p}_0$ as in \eqref{potential.fixing}. By Lemma \ref{Teo:main1}  we  can choose $(\tilde{\mathbf{p}}_n,\mathbf{y}_n)$ with $\mathbf{y}_n\in\partial^c \psi_n (\tilde{\mathbf{p}}_n)$, $\tilde{\mathbf{p}}_n\to \mathbf{p}_0$ and $\mathbf{y}_n\to \nabla^c \psi (\mathbf{p}_0)=\mathbf{y}_0$. But then we would have 
$\psi_n(\tilde{\mathbf{p}}_n)\to\psi({\mathbf{p}}_0)=0$, while, on the other hand,
$$\psi_n(\tilde{\mathbf{p}}_n)=c(\tilde{\mathbf{p}}_n,\mathbf{y}_n)-\psi_n^c(\mathbf{y}_n)\to \infty,$$
which is a contradiction.

Now, we can assume, taking subsequences if necessary, that $\psi_n(\mathbf{p}_n)<-n$ for all $n\in \N$.  Now, taking $\mathbf{y}_n\in \R^d\in \partial^c \psi(\mathbf{p}_n)$
we have that
\begin{eqnarray}\label{ineq:defin_c_co}
\psi_n(\mathbf{x})\leq c(\mathbf{x},\mathbf{y}_n) +\lambda_n, \ \ \text{for all $\mathbf{x}\in \R^d$},
\end{eqnarray}
where $ \lambda_n=\psi_n(\mathbf{p}_n) -c(\mathbf{p}_n,\mathbf{y}_n)$. Hence, by assumption we have that $c(\mathbf{p}_n,\mathbf{y}_n) +\lambda_n\leq-n $ for all $n \in \N$. Now, let $\{ \mathbf{x}_n\}$ be a bounded sequence such that $\psi_n(\mathbf{x}_n)$ is bounded. Then
\begin{align*}
\psi_n(\mathbf{x}_n)< c(\mathbf{x}_n,\mathbf{y}_n) -c(\mathbf{p}_n,\mathbf{y}_n) -n.
\end{align*}
Since $\psi_n(\mathbf{x}_n),\mathbf{p}_n, \mathbf{x}_n$ are bounded, then $|\mathbf{y}_n|\rightarrow\infty$. For each $n$ we choose the height $r_n\in [0, \infty]$ and the direction $\mathbf{z}_n$ of the largest cone with vertex $\mathbf{p}_n-\mathbf{y}_n$ such that
$$K\big(r_n,{\textstyle  \frac{\pi}{1+r_n^{-1}}}, \mathbf{z}_n,\mathbf{p}_n-\mathbf{y}_n\big)\subset \left\lbrace \mathbf{x}:\  h(\mathbf{x})\leq h(\mathbf{p}_n-\mathbf{y}_n) \right\rbrace.$$
Since $\mathbf{z}_n\in \mathbb{S}_{d-1}$, then up to a sub-sequence, we can assume that $\mathbf{z}_n\rightarrow \mathbf{z}\in \mathbb{S}_{d-1}$. Also, since $|\mathbf{p}_n-\mathbf{y}_n|\rightarrow\infty$ then the condition (A2) implies that  $r_n\rightarrow\infty$ (note that otherwise if $|r_n|<R$ then (A2) is no longer true for $r=R+1$ and $\theta=\frac{\pi}{1+r_n^{-1}}$).

Now let $\{ \mathbf{x}_n\}_{n\in \N }\subset \subset  \{\mathbf{x}:\left\langle \mathbf{z},\mathbf{x}-\mathbf{p}\right\rangle>0 \}$ be a bounded sequence. From the fact that $r_n\to\infty$ we see that 
$$ \cos\left({\textstyle \frac{1}{2}\frac{\pi}{1+r_n^{-1}}}\right)\rightarrow 0.$$
Therefore, for big enough $n$
\begin{align}\label{cota_r_n}
|\mathbf{x}_n-\mathbf{p}_n|\cos\left({\textstyle \frac{1}{2}\frac{\pi}{1+r_n^{-1}}}\right)<\left\langle \mathbf{z},\mathbf{x}-\mathbf{p}\right\rangle<r_n.
\end{align}
As a consequence  $\mathbf{x}_n\in K(r_n, \frac{\pi}{1+r_n^{-1}}, \mathbf{z},\mathbf{p}_n)$, which implies that
\begin{align*}
\mathbf{x}_n-\mathbf{y}_n\in K\big(r_n,{\textstyle \frac{\pi}{1+r_n^{-1}}}, \mathbf{z},\mathbf{p}_n-\mathbf{y}_n\big)\subset \left\lbrace \mathbf{x}:\  h(\mathbf{x})\leq h(\mathbf{p}_n-\mathbf{y}_n) \right\rbrace.
\end{align*}
From this we conclude that $c(\mathbf{x}_n,\mathbf{y}_n)\leq c(\mathbf{p}_n,\mathbf{y}_n)$, and turning back to \eqref{ineq:defin_c_co}, that
\begin{align*}
\psi_n(\mathbf{x}_n)\leq c(\mathbf{x}_n,\mathbf{y}_n)+\lambda_n\leq c(\mathbf{p}_n,\mathbf{y}_n)+\lambda_n \leq -n,
\end{align*}
and the proof follows.
\hfill $\Box$}

\bigskip
{
\textsc{Proof of Lemma~\ref{Lem:technic3}.} We split the proof into the following steps:

\textbf{Step 1} {(Pointwise boundedness)}: Fix $\mathbf{x}\in \text{supp}(Q){\cap \text{dom}(\psi)}$. By Lemma~\ref{Teo:main1} there exists a $c$-cyclically monotone set $\Gamma$ such that, up to taking sub-sequences, $\partial^c\psi_n\rightarrow \Gamma$ in the sense of Painlev\'e-Kuratowski. Hence, there exists a sequence $(\mathbf{x}_{n_k},\mathbf{y}_{n_k})\in\partial^c\psi_{n_k}$ satisfying
$$(\mathbf{x}_{n_k},\mathbf{y}_{n_k})\rightarrow (\mathbf{x},\mathbf{y})\in \Gamma .$$  
Assume $ \{\psi_n(\mathbf{x}_{n_k})\}_{k \in \N}$ is not bounded. Then there exist a sub-sequence $ \psi_{n_{k_m}}(\mathbf{x}_{n_{k_m}})\rightarrow -\infty$ (the case 
$ \psi_{n_{k_m}}(\mathbf{x}_{n_{k_m}})\rightarrow \infty$ can be excluded arguing as at the beginning of the proof of Lemma \ref{Lem:Boundness}. Now, we take $\mathbf{p}_0$ as in \eqref{potential.fixing} and observe that, 
\begin{align}\label{eq:take_limits}
0 \leq \psi_{n_{k_m}}(\mathbf{x}_{n_{k_m}})+c(\mathbf{p}_0,\mathbf{y}_{n_{k_m}})-c(\mathbf{x}_{k_m},\mathbf{y}_{n_{k_m}}).
\end{align}
Taking limits as $m \rightarrow \infty$ in \eqref{eq:take_limits} leads to a contradiction. Hence, the sequence $\{\psi_{n_k}(\mathbf{x}_{n_k})\}_{k\in \N}$ 
must be bounded.

For ease of reading we will use the same notation for the subsequence $\{\psi_{n_k}\}_{k \in \N}$ and the main sequence $\{\psi_{n}\}_{n \in \N}$ in the subsequent steps 2 and 3.\\ \\
\textbf{Step 2} (For every compact $K\subset \text{supp}(Q)$ there exists $M>0$ such that $|\psi_{n}(K)|\leq M$ for large enough $n$): Assume, on the contrary, that for every $m\in \N$ there exists some $n_m\in \N$ such that $\mathbf{k}_{n_m}\in K $ and $|\psi_{n_m}(\mathbf{k}_{n_m})|>m$. Then $|\psi_{n_m}(\mathbf{k}_{n_m})|\rightarrow \infty$ as $m\rightarrow\infty$ and, by compactness, $\mathbf{k}_{n_m}\rightarrow \mathbf{k}\in K$ along a subsequence. By Lemma~\ref{Lem:Boundness} we see that there exists $\mathbf{z}\in \R^d$ such that $\psi_n(\mathbf{x}_n)$ is not bounded, for every bounded sequence $\{ \mathbf{x}_n\}\subset \subset\{\mathbf{x}:\left\langle \mathbf{z},\mathbf{x}-\mathbf{k}\right\rangle>0 \}$. Now take $\mathbf{x}_0\in \text{supp}(Q) \cap\{\mathbf{x}:\left\langle \mathbf{z},\mathbf{x}-\mathbf{k}\right\rangle>0 \}$. Since this last set is open, there exists $\varepsilon>0$ such that $B(\mathbf{x}_0,\epsilon)\subset \subset \text{supp}(Q)\cap\{\mathbf{x}:\left\langle \mathbf{z},\mathbf{x}-\mathbf{k}\right\rangle>0 \}$,  and this contradicts Step 1 applied to the point $\mathbf{x}_0$. 
\\ \\
\textbf{Step 3} (For every compact $K\subset \text{supp}(Q)$ there exists $M>0$ such that $\partial^c \psi_{n}(K)\subset B(\mathbf{0},M)$ for large enough $n$): Assume this fails for a compact $K\subset \text{supp}(Q)$. Since $ \text{supp}(Q)$ is open, there exists $\epsilon>0$ such that $$K^{\epsilon}:=\{\mathbf{x}: d(\mathbf{x},K)\leq \epsilon \}\subset \subset \text{supp}(Q).$$ 
By Step 2 there exists $M>0$ and $n_0\in\N$ such that $|\psi_n(\mathbf{k})|\leq M$, for all $\mathbf{k}\in K^{\epsilon}$ and $n\geq n_0$. Now we can take $\{ \mathbf{k}_n\}_{n \in \N}\subset K$ and $\mathbf{y}_n\in\partial^c\psi_n(\mathbf{k}_n)$ such that $|\mathbf{y}_n|\rightarrow\infty$, define $\mathbf{v}_n:=\mathbf{k}_n-\mathbf{y}_n$ and observe that for $n$ big enough $|\mathbf{v}_n|>1$. Define $\xi_n:=1-\epsilon \frac{1}{|\mathbf{v}_n|}$ and note that $\xi_n\rightarrow 1$. All $\mathbf{k}_n$ belong to the compact set $K$, hence define $\mathbf{z}_n :=\mathbf{k}_n+(\xi_n-1)\mathbf{v}_n=\mathbf{k}_n+\frac{\epsilon}{2|\mathbf{v}_n|}\mathbf{v}_n\in K^{\epsilon}$, for which we can ensure $\psi_n(\mathbf{z}_n)>-M$. By definition of superdifferentials we have
\begin{align*}
2M\geq \psi_n(\mathbf{z}_n)-\psi_n(\mathbf{k}_n)\geq h(\mathbf{v}_n)-h(\xi_n \mathbf{v}_n)
\end{align*}
and by convexity of $h$ there exists $\mathbf{s}_n\in \partial h (\xi_n \mathbf{v}_n)$, for which we have
\begin{align}\label{eq:ecuacion:bound_M}
2M\geq \left\langle (1-\xi_n)\mathbf{v}_n,\mathbf{s}_n \right\rangle =\epsilon \left\langle{\textstyle  \frac{\mathbf{v}_n}{|\mathbf{v}_n|}},\mathbf{s}_n \right\rangle.
\end{align}
Observe that we also have
\begin{align*}
h(\mathbf{0})\geq h(\xi_n \mathbf{v}_n)+\left\langle \mathbf{0}-\xi_n \mathbf{v}_n,\mathbf{s}_n \right\rangle.
\end{align*}
Now, since $\xi_n>1-\epsilon>0$ and $|\mathbf{v}_n|\to\infty$ we have $|\xi_n \mathbf{v}_n|\rightarrow \infty$ and, consequently,
\begin{align}\label{eq_limit_abs}
\liminf_{n\rightarrow \infty} \left\langle {\textstyle \frac{\mathbf{v}_n}{|\mathbf{v}_n|}},\mathbf{s}_n \right\rangle \geq \liminf_{n\rightarrow \infty} {\textstyle \frac{h(\xi_n \mathbf{v} )}{|\xi_n \mathbf{v}_n |}} \rightarrow \infty,
\end{align}
with the last limit following from the condition (A3). This contradicts \eqref{eq:ecuacion:bound_M}.

\hfill $\Box$}

\bigskip

{
\textsc{Proof of Lemma~\ref{Lemma:var_bound}.} We write $X_1'$ for a random variable with law $P$ and independent from $(X_1,\ldots,X_n)$. Denote by $P'_n$ the empirical measure associated to $(X_1',X_2, \dots, X_n)$ and $Z':=\mathcal{T}_c(P'_n,Q)$. Since $Q\ll \ell_d$, there exists an optimal transport map from $Q$ to $P_n'$, which we denote by $T$. We set  $$C'_1:=\{ \mathbf{y}\in \R^d:  T(\mathbf{y})= X_1' \}, \quad C'_i:=\{ \mathbf{y}\in \R^d:  T(\mathbf{y})= X_i \},\, i\geq 2,$$
and observe that $Q(C'_i)=\frac{1}{n}$ and 
\begin{align*}
Z'&= \int c(\textbf{x},\textbf{y}) d \pi'(\textbf{x}, \textbf{y})= \int_{C'_1} c(X'_1,\textbf{y}) d Q(\textbf{y}) +\sum_{i=2}^n\int_{C'_i} c(X_i,\textbf{y}) d Q(\textbf{y}),\\
Z&\leq \int_{C'_1} c(X_1,\textbf{y}) d Q(\textbf{y})+ \sum_{i=2}^n \int_{C'_i} c(X_i,\textbf{y}) d Q(\textbf{y}).
\end{align*}
From this we see that (recall that $h(X-\cdot)$ is convex and $Q$-a.s. differentiable) 
\begin{eqnarray*}
Z-Z'&\leq & \int_{C'_1} \left( c(X_1,\textbf{y})- c(X'_1,\textbf{y}) \right) d Q(\textbf{y})\\
&\leq &  \int_{C'_1}  \left\langle \nabla h(X_1-\textbf{y}), X_1-X'_1\right\rangle d Q(\textbf{y})\\
&\leq & | X_1-X'_1| \int_{C'_1}  |\nabla h(X_1-\textbf{y})|d Q(\textbf{y}).
\end{eqnarray*}
Hence, by H\"older's inequality, for any pair $(q_1,q_2)\in\alpha$,
\begin{align}\label{eq:holder1}
\begin{split}
E(Z-Z')_+^2&\leq E\Big\lbrace| X_1-X'_1|^2 \Big(\int_{C'_1}  |\nabla h(X_1-\textbf{y})|d Q(\textbf{y})\Big)^2\Big\rbrace\\
&\leq \Big(E| X_1-X'_1|^{2q_1}\Big)^{\frac{1}{q_1}}\Big( E \Big(\int_{C'_1}  |\nabla h(X_1-\textbf{y})|d Q(\textbf{y})\Big)^{2q_2}\Big)^{\frac{1}{q_2}}.
\end{split}
\end{align}
Using again H\"older's inequality we get that
\begin{align*}
\int_{C'_1}  |\nabla h(X_1-\textbf{y})|d Q(\textbf{y})&\leq \Big(\int_{C'_1} d Q(\textbf{y})\Big)^{\frac{2q_2-1}{2q_2}}\Big(\int_{C'_1}  |\nabla h(X_1-\textbf{y})|^{2q_2} d Q(\textbf{y})\Big)^{\frac{1}{2q_2}}\\
&={\textstyle\frac{1}{n^{\frac{2q_2-1}{2q_2}}}}\Big(\int_{C'_1}  |\nabla h(X_1-\textbf{y})|^{2q_2} d Q(\textbf{y})\Big)^{\frac{1}{2q_2}}.
\end{align*}
Finally, by exchangeability,
\begin{align*}
E\Big\lbrace \int_{C'_1}  |\nabla h(X_1-\textbf{y})|^{2q_2} d Q(\textbf{y})\Big\rbrace&= \frac{1}{n}E\Big\lbrace \sum_{i=1}^n \int_{C'_i}  |\nabla h(X_1-\textbf{y})|^{2q_2} d Q(\textbf{y})\Big\rbrace \\&= \frac{1}{n}E\Big\lbrace \int_{\R^d}  |\nabla h(X_1-\textbf{y})|^{2q_2} d Q(\textbf{y})\Big\rbrace,
\end{align*}
which implies that
\begin{align*}
E\Big( \int_{C'_1}  |\nabla h(X_1-\textbf{y})| d Q(\textbf{y})\Big)^{2q_2}\leq \frac{1}{n^{2q_2}}E\Big(\int_{\R^d}  |\nabla h(X_1-\textbf{y})|^{2q_2} d Q(\textbf{y})\Big).
\end{align*}
Combining the last estimates with \eqref{eq:holder1} leads to 
\begin{align*}
E(Z-Z')_+^2 \leq \frac{1}{n^2} \Big(E| X_1-X'_1|^{2q_1}\Big)^{\frac{1}{q_1}} \Big(E\Big(\int_{\R^d}  |\nabla h(X_1-\textbf{y})|^{2q_2} d Q(\textbf{y})\Big)\Big)^{\frac{1}{q_2}}.
\end{align*}

\hfill $\Box$

}

\end{document}